\newcommand{\mf}[1]{\mathfrak{#1}}
\newcommand{\mc}[1]{\mathcal{#1}}
\newcommand{\ms}[1]{\mathsf{#1}}
\newcommand{\mr}[1]{\mathrm{#1}}
\newcommand{\ps}[1]{\pmb{\mathsf{#1}}}
\newcommand{\ov}{\overline}
\newcommand{\lr}[2]{\langle\, #1,\,#2\,\rangle}
\newcommand{\up}{\upharpoonright}
\newcommand{\ep}{\upvarepsilon} 
\newcommand{\ze}{\mathsf{0}} 
\newcommand{\un}{\mathsf{1}} 
\newcommand{\N}{\mathbb{N}}
\newcommand{\R}{\mathbb{R}}
\newcommand{\C}{\mathbb{C}}
\newcommand{\Z}{\mathbb{Z}}
\newtheorem{theorem}{Theorem}
\newtheorem{definition}[theorem]{Definition}
\newtheorem{proposition}[theorem]{Proposition}
\newtheorem{corollary}[theorem]{Corollary}
\newtheorem{lemma}[theorem]{Lemma}
\theoremstyle{definition}
\newtheorem{remark}
[theorem]
{Remark}
\numberwithin{theorem}{section}
\begin{document}
\title[Construction of a Natural Transformation]
{Construction of a Natural Transformation from a Classical to a Quantum $0$-Species}
\author{Benedetto Silvestri}
\date{\today}
\keywords{dynamical patterns, natural transformations, topological $\ast$-algebras of linear operators,
$C_{0}$-semigroups on locally convex spaces, topological $\ast$-algebras of test functions.}
\subjclass[2010]{Primary 46M99, 46H35, 47D06, 46Kxx; Secondary 46F05}
\begin{abstract}
A natural transformation $\mathfrak{J}$ between functors valued in the category $\mathfrak{Chdv}_{0}$ 
is assembled. $\mathfrak{Chdv}_{0}$ is obtained by replacing both the categories $\mathrm{ptls}$ 
and $\mathrm{ptsa}$ with the category of topological linear spaces in the defining properties 
of the category $\mathfrak{Chdv}$ introduced in one of our previous papers.
By letting a $\mathfrak{dp}$-valued functor be (classical) quantum whenever every its value is a dynamical 
pattern whose set map takes values in the set of (commutative) noncommutative topological unital $\ast-$algebras,
and letting a (classical) quantum $0$-species be a $\mathfrak{Chdv}_{0}$-valued functor factorizing through
the canonical functor from $\mathfrak{dp}$ to $\mathfrak{Chdv}_{0}$ into a (classical) quantum 
$\mathfrak{dp}$-valued functor, we have that the domain and codomain of $\mathfrak{J}$ 
are a classical and a quantum $0$-species respectively.
\end{abstract}
\maketitle
\tableofcontents
\flushbottom
\section*{Notation}
In this section we fix the notation and collect general facts we shall use often without any further mention
in the paper including the Introduction.
If $X,Y$ are topological spaces, then $\mc{C}(X,Y)$ is the set of continuous maps 
from $X$ to $Y$. If the set underlying $X$ is a subset of the set underlying $Y$, then $X\hookrightarrow Y$ 
means $\imath_{X}^{Y}\in\mc{C}(X,Y)$ with $\imath_{X}^{Y}$ the inclusion map of $X$ into $Y$.
$\mr{vct}$ is the category of complex vector spaces and linear maps, 
$\mr{top}$ is the category of topological spaces and continuous maps.
\par
All topological vector spaces are considered over $\C$,
$\mr{tls}$ is the category of topological vector spaces and continuous linear maps.
If $E,F\in\mr{tls}$, then we let  
$\mf{L}(E,F)\coloneqq\mr{Mor}_{\mr{tls}}(E,F)=\mc{C}(E,F)\cap\mr{Mor_{vct}}(E,F)$,
$\mf{L}(E)\coloneqq\mf{L}(E,E)$ and $E^{\prime}\coloneqq\mf{L}(E,\C)$. 
If $E$ and $F$ are locally convex spaces and $\mf{G}$ is a family of bounded subsets of $E$, then 
$\mf{L}_{\mf{G}}(E,F)$ is the locally convex space $\mf{L}(E,F)$ endowed with the topology 
of uniform convergence over the sets in $\mf{G}$. $\mf{L}_{s}(E,F)$, $\mf{L}_{b}(E,F)$ and $\mf{L}_{pc}(E,F)$   
stand for $\mf{L}_{\mf{G}}(E,F)$ with $\mf{G}$ respectively the family of finite, bounded and precompact 
subsets of $E$. If $p$ is a continuous seminorm on $F$ and $B$ is a bounded set of $E$, then 
$p_{B}:T\mapsto\sup_{x\in B}p(Tx)$ is a continuous seminorm of $\mf{L}_{b}(E,F)$.
$\mr{ptls}$ is the subcategory of $\mr{tls}$ of preordered topological vector spaces and linear continuous 
positive maps, $\mf{r}\in\mr{Fct}(\mr{ptls},\mr{tls})$ is the forgetful functor.
\par
For any unital $\ast$-algebra $\ms{A}$, 
$\ms{F}_{\max}$ (relative to a fixed wedge in the hermitian elements of $\ms{A}$)
is defined in \cite[p.24]{27sch}, while 
$\ms{A}$-invariant non-empty subsets of $\ms{F}_{\max}$ are defined in \cite[p.25]{27sch}.
If $\mc{H}$ and $\mc{G}$ are Hilbert spaces, then for every densely defined linear operator in $\mc{H}$ 
with values in $\mc{G}$, let $S^{\intercal}$ denote the adjoint of $S$. The concept of $O^{\ast}$-algebra $\mc{A}$
on a dense linear subspace $D$ of a Hilbert space is provided in \cite[Def. 2.1.6]{27sch}, 
the locally convex space $D_{\mc{A}}$ is defined in \cite[Def. 2.1.1]{27sch} and its topology is called the graph
topology of $\mc{A}$ on $D$. The linear space $\mc{L}(D_{\mc{A}},D_{\mc{A}}^{+})$ is defined in 
\cite[Def. 3.2.1]{27sch} and the bounded topology $\tau_{b}$ on it in \cite[p.76]{27sch}. 
\par
$\mr{tsa}_{0}$ is the category of topological $\ast$-algebras and continuous $\ast$-morphisms,
$\mr{tsa}$ is the category of topological unital $\ast$-algebras and continuous unit preserving $\ast$-morphisms. 
Let $\tilde{\mf{q}}_{0}\in\mr{Fct}(\mr{tsa}_{0},\mr{tls})$, $\mf{q}_{0}\in\mr{Fct}(\mr{tsa},\mr{tls})$ 
and $\mf{q}_{1}\in\mr{Fct}(\mr{tsa},\mr{top})$ be the forgetful functors.
If $A$ is a subcategory of $B$, then we let $\mr{I}_{A}^{B}$ denote the inclusion functor.
Given an object $A$ with a structure we often use, as we did above, the common abuse of language of denoting 
by $A$ each of its underlying structure. So for instance if $A$ and $B$ are topological unital $\ast$-algebras, 
$\mf{L}(A,B)$ stands for $\mf{L}(\mf{q}_{0}(A),\mf{q}_{0}(B))$ 
while $\mc{C}(A,B)$ stands for $\mc{C}(\mf{q}_{1}(A),\mf{q}_{1}(B))$. 
A topological unital sub $\ast$-algebra $A$ of $B\in\mr{tsa}$ here always means that 
$A\in\mr{tsa}$ so that $A$ is a topological subspace of $B$, $A$ is a sub $\ast$-algebra 
of $B$ and $A$ holds the same unit of $B$. Given $A\in\mr{tsa}_{0}$ we let $A_{1}\in\mr{tsa}$ be the unitization 
of $A$ \cite[p.38]{27fra} whose topology by definition is the product topology.
In case $A$ is a locally convex $\ast$-algebra whose topology is generated by the set 
$S$ of seminorms, then $\tilde{S}=\{\tilde{r}\,\vert\,r\in S\}$ generates the locally convex topology of 
$A_{1}$, where $\tilde{p}(a,\lambda)\coloneqq p(a)+|\lambda|$ for every seminorm $p$ on $A$.
In particular if $p$ is a continuous seminorm on $A$, then $\tilde{p}$ is continuous on $A_{1}$.
Thus if $A\in\mr{tsa}_{0}$, and $B\in\mr{tsa}$ are both locally convex and 
$T\in\mr{Mor}_{\mr{tls}}(\tilde{\mf{q}}_{0}(A),\mf{q}_{0}(B))$, then 
\begin{equation}
\label{12040058}
\bigl((a,\lambda)\mapsto Ta+\lambda\un_{B}\bigr)\in\mr{Mor}_{\mr{tls}}(\mf{q}_{0}(A_{1}),\mf{q}_{0}(B)); 
\end{equation}
since for every continuous seminorm $q$ of $B$ there exists a continuous seminorm $p$ of $A$ such that for all 
$(a,\lambda)\in A_{1}$ we have $q(Ta+\lambda\un_{B})\leq q(Ta)+k|\lambda|\leq p(a)+k|\lambda|$ 
with $k=q(\un_{B})$, thus if $k\neq 0$, then $q(Ta+\lambda\un_{B})\leq k\widetilde{p^{\prime}}(a,\lambda)$ 
with $p^{\prime}=k^{-1}p$, otherwise $q(Ta+\lambda\un_{B})\leq\tilde{p}(a,\lambda)$. 
\par
Given two $\mr{top}$-quasi enriched categories $A$ and $B$,
let $\mr{Fct}_{\mr{top}}(A,B)$ denote the set of functors of $\mr{top}$-quasi enriched categories
from $A$ to $B$ \cite[Section 1.2]{27sil}.
\par
If $X$ is a locally compact space, then $\mc{K}(X)$ denotes the locally convex space of complex valued 
continuous maps on $X$ with compact support endowed with the usual inductive limit topology 
\cite[Ch. 3, \S 1, n. 1]{27IntBourb}. In this paper a measure on $X$ always means an element of 
$\mc{K}(X)^{\prime}$ \cite[Ch. 3, \S 1, n. 3, Def. 2]{27IntBourb}. 
\par
All manifolds are smooth and finite dimensional, hence locally compact. All vector fields are smooth.
Let $M$ be a manifold. 
$\mc{C}^{\infty}(M)$ denotes the locally convex space of complex valued smooth maps on $M$ endowed with the 
Frechet topology \cite[p.12]{27wal} here denoted by $\tau^{\infty}(M)$ or simply $\tau^{\infty}$.
$\mc{D}(M)$ denotes the locally convex space of complex valued smooth maps on $M$ 
with compact support endowed with the usual inductive limit topology \cite[p.13]{27wal} here denoted by
$\tau_{c}^{\infty}(M)$ or simply $\tau_{c}^{\infty}$.
We have $\mc{D}(M)\hookrightarrow\mc{C}^{\infty}(M)$ \cite[Rmk. 1.1.13]{27wal}.
$\mc{D}(M)$ is a Montel space \cite[example 6, p.241]{27hor} so barrelled, 
sequentially complete \cite[Thm. 1.1.11(i)]{27wal} topological $\ast-$algebra \cite[28.7, 28.12]{27fra} such that
$\mc{D}(M)\hookrightarrow\mc{K}(M)$ \cite[p.241]{27hor}. 
Let $\lr{\mc{D}(M)}{\cdot}$ denote the natural left $\mc{C}^{\infty}(M)$-module. 
\par
If $N$ and $M$ are manifolds and $\phi:N\to M$ is a smooth proper map \cite[Def. 1.1.16]{27wal}, 
then we shall consider $\phi^{\ast}$ defined on $\mc{D}(M)$, thus $\phi^{\ast}\in\mf{L}(\mc{D}(M),\mc{D}(N))$ 
\cite[Prp. 1.1.17]{27wal}.
For any $k\in\Z_{\ast}^{+}$ let $\mr{DiffOp}^{k}(M,N)$ be the set of the restrictions at $\mc{D}(M)$ of the 
elements in $\mr{DiffOp}_{0}^{k}(M\times\C,N\times\C)$ 
where for every vector bundle $A$ on $M$ and $B$ on $N$, we let $\mr{DiffOp}_{0}^{k}(A,B)$ be 
the set of differential operators of order $k$ from $A$ to $B$ \cite[Def. 1.2.1]{27wal}.
Set $\mr{DiffOp}^{k}(M)=\mr{DiffOp}^{k}(M,M)$.
$\lr{\mr{DiffOp}^{k}(M,N)}{\cdot}$ is naturally a left $\mc{C}^{\infty}(N)$-module, 
since $\mr{DiffOp}_{0}^{k}(A,B)$ it is so, where for every $F\in\mc{C}^{\infty}(N)$ and 
$T\in\mr{DiffOp}^{k}(M,N)$ we set $(F\cdot T):\mc{D}(M)\to\mc{D}(N)$, $h\mapsto F\cdot T(h)$.
If $T\in\mr{DiffOp}^{k}(M,N)$, then $\mr{supp}(Df)\subseteq\mr{supp}(f)$ for every $f\in\mc{D}(M)$
\cite[Rmk.1.2.2(iv)]{27wal}.
We have $\mr{DiffOp}^{k}(M,N)\subset\mf{L}(\mc{D}(M),\mc{D}(N))$ \cite[Thm. 1.2.10]{27wal}.
\par
Let $\mf{X}(M)$ be the set of vector fields of $M$, if $U\in\mf{X}(M)$, then let $\mathsterling_{U}$ 
be the restriction at $\mc{D}(M)$ of the Lie derivative on $\mc{C}^{\infty}(M)$ associated with $U$
here denoted by $\mathsterling_{U}^{\divideontimes}$; so $\mathsterling_{U}f=Uf$ for every $f\in\mc{D}(M)$ and 
in particular $\mathsterling_{U}\in\mr{DiffOp}^{1}(M)$.
If $V\in\mf{X}(N)$, $U\in\mf{X}(M)$, $\phi:N\to M$ is smooth and $V$ and $U$ are $\phi$-related, then 
$\mathsterling_{V}\circ\phi^{\ast}=\phi^{\ast}\circ\mathsterling_{U}$.
Whenever $U$ is complete, we let $\uptheta^{U}:\R\to\mr{Diff}(M)$ be the flow on $M$ generated by $U$ 
and for every $t\in\R$ let $\upeta_{M}^{U}(t)\coloneqq(\uptheta^{U}(-t))^{\ast}\in\mf{L}(\mc{D}(M))$ 
namely the map $f\mapsto f\circ\uptheta_{-t}^{U}$.
\par
Let $\mc{M}=(M,g)$ be a semi-Riemannian manifold, thus for every $f\in\mc{C}^{\infty}(M)$ let 
$\mr{grad}_{\mc{M}}(f)$ be the gradient of $f$ w.r.t. $g$, thus $\mr{grad}_{\mc{M}}(f)\in\mf{X}(M)$ such that 
$\lr{\mr{grad}_{\mc{M}}(f)}{Y}_{\mc{M}}=\mathsterling_{Y}(f)$ for every $Y\in\mf{X}(M)$ where 
$\lr{\cdot}{\cdot}_{\mc{M}}:\mf{X}(M)\times\mf{X}(M)\to\mc{C}^{\infty}(M)$ is the 
$\mc{C}^{\infty}(M)$-bilinear map corresponding to the metric $g$.
Let $\mu_{g}$ denote the measure on $M$ associated via 
\cite[Thm. 4.7]{27lan} with the density relative to $g$ \cite[Prp. 2.1.15(ii)]{27wal}, 
set $\mc{H}_{g}\coloneqq\mr{L}^{2}(M,d\mu_{g})$. We have $\mc{K}(M)\hookrightarrow\mc{H}_{g}$ since for 
instance \cite[Thm. 1.1.11(iv)]{27wal}, since $\mu_{g}$ is continuous on $\mc{K}(M)$ and since 
$\|f^{\ast}f\|=\|f\|^{2}$ for every compact $K$ and every $f\in\mc{C}(X,K)$, being a $\mr{C}^{\ast}-$algebra
the normed space $\mc{C}(X,K)$ of complex valued continuous maps on $X$ with support in $K$ endowed with the 
$\sup-$norm. The inclusion $\mc{K}(M)\hookrightarrow\mc{H}_{g}$ is dense 
\cite[Ch. 4, \S 3, n. 4, Def. 2]{27IntBourb} as well the inclusion $\mc{D}(M)\hookrightarrow\mc{H}_{g}$.
If $(N,g^{\prime})$ is a semi-Riemannian manifold and $D\in\mr{DiffOp}^{k}(M,N)$, 
then $D^{\intercal}$ is well-set since $\mc{D}(M)$ is dense in $\mc{H}_{g}$, 
moreover by \cite[Thm. 1.2.15]{27wal} we deduce that $\mc{D}(N)\subseteq\mr{Dom}(D^{\intercal})$ and 
$D^{\intercal}\mc{D}(N)\subseteq\mc{D}(M)$.
If $\phi:N\to M$ is a smooth diffeomorphism such that $\phi^{\ast}g=g^{\prime}$, thus $\phi^{\ast}$ 
(on $\mc{D}(M)$) extends to a unitary operator from $\mc{H}_{g}$ onto $\mc{H}_{g^{\prime}}$ still denoted 
$\phi^{\ast}$ such that $(\phi^{\ast})^{\intercal}=(\phi^{-1})^{\ast}$.
\par
Here by a $C_{0}$-semigroup on a topological vector space $Y$ is meant a map $U\in\mc{C}(\R_{+},\mf{L}_{s}(Y))$
such that $U(s+t)=U(s)U(t)$ for all $t,s\in\R_{+}$ and $U(0)=\un$. In addition by letting $\tau$ the topology
of $Y$, $U$ is called $\tau-$equicontinuous or simply equicontinuous if $\{U(t)\,\vert\,t\in\R_{+}\}$
is a $(\tau,\tau)$-equicontinuous set. Similar definitions for a $C_{0}$-group by replacing $\R_{+}$ with $\R$.
\par
If $X$ is a sequentially complete locally convex space with topology $\tau$ and $T\in\mf{L}(X)$ such that 
$\{T^{n}\,\vert\,n\in\Z_{+}^{\ast}\}$ is $(\tau,\tau)$-equicontinuous, then it is well-known that
there exists a $C_{0}$-semigroup $\exp_{X}^{T}$ on $X$ such that: 
\begin{enumerate}
\item
$T$ is the infinitesimal $\tau-$generator of $\exp_{X}^{T}$; 
\item
$\exp_{X}^{T}(t)x=\sum_{k=0}^{\infty}\frac{(tT)^{k}}{k!}x$ convergence in $X$ for every 
$t\in\R_{+}$ and $x\in X$;
\item
by letting $\ov{\exp}_{X}^{T}:\R_{+}\ni t\mapsto\exp(-t)\exp_{X}^{T}(t)$ we have that 
$\ov{\exp}_{X}^{T}$ is an equicontinuous $C_{0}$-semigroup on $X$.
\end{enumerate}
Since the equicontinuity hypothesis it is clear that the series in (2) extends to $t\in\R$, so $\exp_{X}^{T}$
extends to a $C_{0}$-group on $X$ still denoted by the same symbol, moreover 
$\underline{\exp}_{X}^{T}$ is an equicontinuous $C_{0}$-semigroup on $X$, where
$\underline{\exp}_{X}^{T}:\R_{+}\ni t\mapsto\exp(-t)\exp_{X}^{T}(-t)$.
\section*{Introduction}
In \cite[Cor.1.6.43]{27sil} and the discussion after we have shown that the existence of a 
natural transformation, from the classical gravity species $\mr{a}^{4}$ to a strict quantum gravity species, 
satisfying certain constraints would render the 
dark energy hypothesis unnecessary in explaining the actual cosmic acceleration. 
This paper is one step toward a better understanding of the way to construct such a natural transformation.
\par
In order to describe our results we need some additional terminology.
\par
First of all we recall that an object of the category $\mf{dp}$ of dynamical patterns
(\cite[Cor. 1.4.5 and Def. 1.4.1]{27sil}) is a functor of $\mr{top}$-quasi enriched categories 
(i.e. a functor whose morphism map is \emph{continuous})
valued in the $\mr{top}$-quasi enriched category $\mr{tsa}$ of unital topological $\ast$-algebras
(enriched by endowing the morphism set of every two objects of $\mr{tsa}$ 
with the topology of simple convergence). A morphism of dynamical patterns is a couple $(f,\mr{T})$ 
formed by a functor $f$ of $\mr{top}$-quasi enriched categories from the domain of the second dynamical pattern 
to the domain of the first one, and by a natural transformation $\mr{T}$ from the composition of the 
first dynamical pattern with $f$ to the second dynamical pattern. 
\par
Next let $\mf{Chdv}_{0}$ be the category introduced in Prp. \ref{12131143}
and obtained by replacing in the defining properties 
of the category $\mf{Chdv}$ (\cite[Cor. 1.4.18 and Def. 1.4.17]{27sil}) both the categories $\mr{ptls}$ and 
$\mr{ptsa}$ with the category of topological linear spaces $\mr{tls}$. Similarly at $\ps{\Uppsi}$ there exists 
the (canonical) functor $\ps{\Uppsi}_{0}$ from $\mf{dp}$ to $\mf{Chdv}_{0}$. 
A $0$-species is a functor valued in $\mf{Chdv}_{0}$ which factorizes through $\mf{dp}$ 
(Def. \ref{12041523}) in particular a $0$-species is a $1$-cell of the $2$-category 
$2-\mf{dp}$. 
A dynamical pattern is called quantum (respectively classical) if all its values are 
noncommutative (respectively commutative) algebras.
A functor valued in $\mf{dp}$ is called quantum (respectively classical) if all its values are quantum 
(respectively classical) dynamical patterns. Finally a $0$-species is called quantum (respectively classical) 
if it factorizes through $\ps{\Uppsi}_{0}$ into a quantum (respectively classical) functor valued in $\mf{dp}$.
Thus we have what follows.
\par
In \textbf{Thm. \ref{11280957}} and \textbf{Thm. \ref{11290640}} we construct two functors valued in $\mf{dp}$, 
the first $\mf{x}$ classical and the second $\mf{z}$ quantum.
\par
Then in \textbf{Thm. \ref{12031824}} we establish our main result: The existence of the natural transformation 
$\mf{J}$ from the classical $0$-species $\ms{x}$ to the quantum $0$-species $\ms{z}$,
where $\ms{x}$ factorizes to the left and to the right through $\mf{x}$ and $\ms{z}$ factorizes
to the left and to the right through $\mf{z}$.
\par
Now the following observation is worthwhile.
\par 
Since in the present paper we are decisively dealing with the categories $\mf{dp}$ and $\mf{Chdv}_{0}$, 
specifically with the construction of the functors $\mf{x}$ and $\mf{z}$
and the construction of the natural transformation $\mf{J}$,
statements concerning \textbf{continuity} acquire a distinctive value. 
Specifically we refer to:
\begin{enumerate}
\item
The $C_{0}$-continuity of the semigroup $\Gamma_{\mc{M}}^{U}$ (Thm. \ref{11191752}\eqref{11191752st2}) 
at the core of the object map of $\mf{z}$.
\item
The continuity of the $\ast$-morphism $\mc{T}(\phi)$ (Thm. \ref{14111827}\eqref{14111827st2})
at the core of the morphism map of $\mf{z}$.
\item
The continuity of the map $f\mapsto\mathsterling_{\mr{grad}_{\mc{M}}(f)}$ (Cor. \ref{12040944})
at the core of $\mf{J}$.
\end{enumerate}
\par
In the remaining of this introduction we shall briefly outline the main steps to arrive at our main result.
\par
\textbf{Thm. \ref{14111827}} and \textbf{Thm. \ref{11191752}} are the main results of section \ref{12111211}. 
In Thm. \ref{14111827}\eqref{14111827st1} we prove that $\mf{B}(\mc{M})$ is a unital topological $\ast$-algebra 
and in Thm. \ref{14111827}\eqref{14111827st2} we prove that $\phi$ implements via $\mc{T}$ a morphism of 
unital topological $\ast$-algebras.
In Thm. \ref{11191752}\eqref{11191752st2} we establish the existence of $\Gamma_{\mc{M}}^{U}$ 
a $C_{0}$-group on $\mf{B}(\mc{M})$ of $\ast-$automorphisms and in Thm. \ref{11191752}\eqref{11191752st3}
we prove that $\Gamma$ and $\mc{T}$ are equivariant namely \eqref{12111055} holds true.
Here $\mc{M}=(M,g)$ and $\mc{N}=(N,g^{\prime})$ are semi-Riemannian manifolds and 
$\phi:N\to M$ is a smooth diffeomorphism such that $\phi^{\ast}g=g^{\prime}$.
Our construction of $\mf{B}$ is calibrated to ensure that $\mc{T}$ and $\Gamma$ possess the above properties.
\par
We start by defining $\exp_{M}^{U}$ as the exponential $C_{0}$-group, on the sequentially complete locally
convex space $\mc{D}(M)$ (remember $\mc{D}(M)$ is endowed with the inductive limit topology 
$\tau_{c}^{\infty}(M)$), generated by $\mathsterling_{U}$ provided $\{\mathsterling_{U}^{k}\}_{k\in\Z_{+}}$
be $(\tau_{c}^{\infty},\tau_{c}^{\infty})$-equicontinuous, and let $\Lambda_{M}^{U}$ be the corresponding 
action on $\mf{L}(\mc{D}(M))$ namely
\begin{equation*}
\Lambda_{M}^{U}:t\mapsto(T\mapsto\exp_{M}^{U}(t)\circ T\circ\exp_{M}^{U}(-t)).
\end{equation*}
Next we define the set $\ms{B}(\mc{M})$ underlying $\mf{B}(\mc{M})$ in Def. \ref{10311558} as the 
subset of those linear and continuous operators on $\mc{D}(M)$ whose Hilbert space adjoint 
in $\mc{H}_{g}$ is such that its domain contains $\mc{D}(M)$, maps $\mc{D}(M)$ into itself and 
its restriction to $\mc{D}(M)$ is continuous:
\begin{multline*}
\ms{B}(\mc{M})\coloneqq
\{
T\in\mf{L}(\mc{D}(M))\,\vert\,\mc{D}(M)\subseteq\mr{Dom}(T^{\intercal}),\\ 
T^{\intercal}\mc{D}(M)\subseteq\mc{D}(M), T^{\dagger}\coloneqq 
T^{\intercal}\up\mc{D}(M)\in\mf{L}(\mc{D}(M))
\};
\end{multline*}
where $T^{\intercal}$ is the $\mc{H}_{g}-$adjoint of the operator $T$.
In Prp. \ref{11241301} we show that $\ms{B}(\mc{M})$ 
is a $O^{\ast}$-algebra on $\mc{D}(M)$, $\mc{D}(M)$ seen in this context as a dense linear subspace of 
$\mc{H}_{g}$, in particular $\ms{B}(\mc{M})$ is a unital $\ast$-algebra. 
Then in Def. \ref{10311600} we define a set of functionals $F_{\mc{M}}$ over $\ms{B}(\mc{M})$ and prove in 
Prp. \ref{10311601} that $F_{\mc{M}}$ is a $\ms{B}(\mc{M})$-invariant non-empty subset of $\ms{F}_{\max}$
(relative to the wedge of finite sums of positive elements of $\ms{B}(\mc{M})$). 
This result along with the general result \cite[Lemma 1.5.7]{27sch} 
applied to our $\ast$-algebra $\ms{B}(\mc{M})$ and our set $F_{\mc{M}}$,
enables us to show in Thm. \ref{14111827}\eqref{14111827st1} that 
\begin{equation}
\label{12121507}
\mf{B}(\mc{M})\coloneqq\ms{B}(\mc{M})[\tau_{\mc{M}}]\in\mr{tsa};
\end{equation}
where $\tau_{\mc{M}}$ (Def. \ref{14111822}) is the locally convex topology on $\ms{B}(\mc{M})$ generated 
by the following set of seminorms 
\begin{equation*}
\begin{cases}
\{q^{B}\,\vert\,B\in\mr{Bounded}(\mc{D}(M))\};
\\
q^{B}:\ms{B}(\mc{M})\ni T\mapsto\sup_{f\in B}|\lr{f}{Tf}_{\mc{H}_{g}}|.
\end{cases}
\end{equation*}
There is another $\mr{tsa}$-structure over $\ms{B}(\mc{M})$, indeed
$\ms{B}(\mc{M})$ endowed with the topology relative to the bounded topology on
$\mc{L}(\mc{D}(M)_{\ms{B}(\mc{M})},\mc{D}(M)_{\ms{B}(\mc{M})}^{+})$
is a unital topological $\ast$-algebra and this topology is stronger than $\tau_{\mc{M}}$ (Prp. \ref{11271148}). 
Next by letting 
\begin{equation}
\label{12131428}
\mc{T}(\phi)(T)\coloneqq\phi^{\ast}\circ T\circ(\phi^{-1})^{\ast};
\end{equation}
we prove in Thm. \ref{14111827}\eqref{14111827st2} that 
\begin{equation}
\label{12121510}
\mc{T}(\phi)\in\mr{Mor_{tsa}}(\mf{B}(\mc{M}),\mf{B}(\mc{N})).
\end{equation}
In Cor. \ref{11231634} we prove that 
$\Lambda_{M}^{U}$ is a $C_{0}$-group on the locally convex space $\mf{L}_{b}(\mc{D}(M))$ 
of continuous linear maps on $\mc{D}(M)$ endowed with the topology of uniform
convergence over the bounded subsets of $\mc{D}(M)$ namely 
\begin{equation}
\label{12120927}
\begin{cases}
\Lambda_{M}^{U}\in\mc{C}\bigl(\R,\mf{L}_{s}(\mf{L}_{b}(\mc{D}(M)))\bigr);
\\
\Lambda_{M}^{U}\text{ is a one-parameter group}.
\end{cases}
\end{equation}
This result is a consequence of Lemma \ref{11231514}, a more general result important in its own, 
enlightening the twofold essential role palyed by the Montel space $\mc{D}(M)$
in obtaining Cor. \ref{11231634}: the first directly by its definition, the second permitting to use of the 
Banach-Steinhaus Thm. since any Montel space is barrelled.
\par
In Def. \ref{11241119} we define the category $\mf{vf}_{0}$ of the couples $(\mc{M},U)$ with the following
properties: $\mc{M}=(M,g)$ is a semi-Riemannian manifold, $U$ is a vector field of $M$ such that 
$\{\mathsterling_{U}^{n}\,\vert\,n\in\Z_{+}^{\ast}\}$ is $(\tau_{c}^{\infty},\tau_{c}^{\infty})$-equicontinuous,
and the following property of invariance holds true
\begin{equation}
\label{12121530}
\mu_{g}\circ\mathsterling_{U}=\ze.
\end{equation}
While $\phi$ is a morphism from $(\mc{M},U)$ to $(\mc{N},V)$ iff $\phi:N\to M$ is smooth, 
$\phi^{\ast}g=g^{\prime}$, and $U$ and $V$ are $\phi$-related with $\mc{N}=(N,g^{\prime})$.
$\mf{vf}$ is the subcategory of $\mf{vf}_{0}$ with the same object set and diffeomorphisms as morphisms.
Now the reason of introducing the above categories stands on 
Thm. \ref{11191752}(\ref{11191752st2},\ref{11191752st3}) establishing that whenever
$(\mc{M},U),(\mc{N},V)\in\mf{vf}$ and $\phi\in\mr{Mor}_{\mf{vf}}((\mc{M},U),(\mc{N},V))$, 
$\Lambda_{M}^{U}$ restricts to a $C_{0}$-group $\Gamma_{\mc{M}}^{U}$ on $\mf{B}(\mc{M})$ of $\ast-$automorphisms 
such that $\mc{T}$ and $\Gamma$ are equivariant, namely  
\begin{equation}
\label{12120918}
\begin{cases}
\Gamma_{\mc{M}}^{U}\in\mc{C}\bigl(\R,\mf{L}_{s}(\mf{B}(\mc{M}))\bigr),
\\
\Gamma_{\mc{M}}^{U}\text{ is a one-parameter group of $\ast-$automorphisms};
\end{cases}
\end{equation}
and for every $t\in\R$
\begin{equation}
\label{12111055}
\mc{T}(\phi)\circ\Gamma_{\mc{M}}^{U}(t)=\Gamma_{\mc{N}}^{V}(t)\circ\mc{T}(\phi).
\end{equation}
Let us outline the essential steps yielding to \eqref{12120918}. 
Firstly in Cor. \ref{11261248} we show that whenever $(\mc{M},U)\in\mf{vf}_{0}$, the group $\exp_{M}^{U}$ 
extends to a unique $C_{0}$-group $\mf{exp}_{\mc{M}}^{U}$ on $\mc{H}_{g}$ of unitary operators whose 
infinitesimal generator extends $\mathsterling_{U}$. 
\par
It is worthwhile remarking that the unitary extension is essentially due to \eqref{12121530} 
(proof of Lemma \ref{11240851}). Thus Cor. \ref{11261248} ensures that $\Lambda_{M}^{U}$ restricts to 
a group $\Gamma_{\mc{M}}^{U}$ on $\mf{B}(\mc{M})$ of $\ast-$automorphisms (Cor. \ref{11241005}).
\par
Now in the fundamental Lemma \ref{11250550} we prove that the topology 
$\tau_{\mc{M}}$ is generated by a collection of seminorms extending to
$\mf{L}_{b}(\mc{D}(M))$-continuous seminorms. While $\Gamma_{\mc{M}}^{U}(t)$ is a continuous linear map on 
$\mf{B}(\mc{M})$ since $\exp_{M}^{U}(t)$ maps bounded sets into bounded sets.
Therefore \eqref{12120927} implies that $\Gamma_{\mc{M}}^{U}$ is a $C_{0}$-group on $\mf{B}(\mc{M})$. 
We remark that in showing Lemma \ref{11250550} the fact that $\mc{D}(M)$ is barrelled is essential. 
\par
In addition to the above results, by an application of the Banach-Steinhaus Thm. and of the fact that 
$\mc{D}(M)$ is specifically a Montel space, in Thm. \ref{11191752}\eqref{11191752st1} we prove that 
$\exp_{M}^{U}:\R\to\mf{U}(\mc{M})$ is a continuous morphism of groups, where $\mf{U}(\mc{M})$ is the 
group of unitary elements of $\mf{B}(\mc{M})$ endowed with the relative topology.
\par
In conclusion of section \ref{12111211} we determine in Prp. \ref{12131143}
the category $\mf{Chdv}_{0}$ and 
the functor $\ps{\Uppsi}_{0}$, while $0$-species are introduced in Def. \ref{12041523}. 
\par
\textbf{Thm. \ref{11280957}} and \textbf{Thm. \ref{11290640}} are the main results of section \ref{12111459}, 
where by using Thm. \ref{14111827} and Thm. \ref{11191752}, we construct two functors $\mf{x}$ and $\mf{z}$
from the category $\mf{vf}$ to the category of dynamical patterns $\mf{dp}$, classical $\mf{x}$
and quantum $\mf{z}$. 
\par
Let us delineate what above said for the more interesting quantum functor $\mf{z}$, but first of all 
we outline the main structures involved.
\par
For every $(\mc{M},U)\in\mf{vf}$ let $\lr{\mc{M}}{U}$ be the $\mr{top}$-quasi enriched category of subsets 
of $M$ such that for all $X,Y\in\lr{\mc{M}}{U}$ we have
$\mr{Mor}_{\lr{\mc{M}}{U}}(X,Y)=\{(X,Y)\}\times\mr{mor}_{\lr{\mc{M}}{U}}(X,Y)$ 
endowed with the topology inherited by $\R$ where 
\begin{equation*}
\mr{mor}_{\lr{\mc{M}}{U}}(X,Y)=\{t\in\R\,\vert\,\exp_{M}^{U}(t)\mc{D}(M,X)=\mc{D}(M,Y)\};
\end{equation*}
and $\mc{D}(M,X)$ is the topological sub $\ast$-algebra of $\mc{D}(M)$ of those maps whose support is 
contained in $X$. 
Next let $\mf{B}(\mc{M},X)$ be the topological unital sub $\ast$-algebra 
of $\mf{B}(\mc{M})$ of those $T$ such that $T\mc{D}(M,X)\subseteq\mc{D}(M,X)$
and $T^{\dagger}\mc{D}(M,X)\subseteq\mc{D}(M,X)$.
Thus we can define the maps $(\mc{F}_{\lr{\mc{M}}{U}})_{o}$ 
and $(\mc{F}_{\lr{\mc{M}}{U}})_{m}$ on the object and morphism set of $\lr{\mc{M}}{U}$ respectively as 
\begin{equation*}
\begin{cases}
(\mc{F}_{\lr{\mc{M}}{U}})_{o}:X\mapsto\mf{B}(\mc{M},X),
\\
(\mc{F}_{\lr{\mc{M}}{U}})_{m}:((X,Y),t)\mapsto
\bigl(\mf{B}(\mc{M},X)\to\mf{B}(\mc{M},Y)\quad T\mapsto\Gamma_{\mc{M}}^{U}(t)T\bigr).
\end{cases}
\end{equation*}
While for every $(\mc{M},U),(\mc{N},V)\in\mf{vf}$ and $\phi\in\mr{Mor}_{\mf{vf}}((\mc{M},U),(\mc{N},V))$
we can set the maps $f_{o}^{\phi}$ and $f_{m}^{\phi}$ over the object and the morphism set of $\lr{\mc{N}}{V}$ 
respectively such that 
\begin{equation*}
\begin{cases}
f_{o}^{\phi}:Y\mapsto\phi(Y);
\\
f_{m}^{\phi}:((Y,Z),s)\mapsto((\phi(Y),\phi(Z)),s).
\end{cases}
\end{equation*}
and define the map $\mf{T}$ over the morphism set of $\mf{vf}$ such that 
\begin{equation*}
\mf{T}^{\phi}:Y\mapsto
\bigl(\mf{B}(\mc{M},\phi(Y))\to\mf{B}(\mc{N},Y)\quad T\mapsto\mc{T}(\phi)T\bigr).
\end{equation*}
Thus we are able to define $\mf{z}$ on the category $\mf{vf}$ such that 
\begin{equation*}
\begin{cases}
\mf{z}_{o}:(\mc{M},U)\mapsto\lr{\lr{\mc{M}}{U}}{\mc{F}_{\lr{\mc{M}}{U}}};
\\
\mf{z}_{m}:\phi\mapsto(f^{\phi},\mf{T}^{\phi}).
\end{cases}
\end{equation*}
Now we have to see that effectively 
\begin{equation}
\label{12131600}
\mf{z}\in\mr{Fct}(\mf{vf},\mf{dp}).
\end{equation}
What happens is that \eqref{12120918} is the core of the proof that the object map $\mf{z}_{o}$ 
is well-set namely 
\begin{equation*}
\mc{F}_{\lr{\mc{M}}{U}}\in\mr{Fct}_{\mr{top}}(\lr{\mc{M}}{U},\mr{tsa});
\end{equation*}
that Lemma \ref{11011940} implies that the first component of the morphism map $\mf{z}_{m}$ is well-set, namely
\begin{equation*}
f^{\phi}\in\mr{Fct_{top}}(\lr{\mc{N}}{V},\lr{\mc{M}}{U});
\end{equation*}
that \eqref{12121510} and the equivariance \eqref{12111055} 
are the core of the proof that also the second component of $\mf{z}_{m}$ is well-set, namely 
\begin{equation}
\label{12111511}
\mf{T}^{\phi}\in\mr{Mor}_{\mr{Fct}(\lr{\mc{N}}{V},\mr{tsa})}
(\mc{F}_{\lr{\mc{M}}{U}}\circ f^{\phi},\mc{F}_{\lr{\mc{N}}{V}});
\end{equation}
finally that $\mc{T}(\phi\circ\psi)=\mc{T}(\psi)\circ\mc{T}(\phi)$ 
implies that $\mf{z}_{m}$ preserves the morphism composition, and \eqref{12131600} follows.
\par
About the classical functor $\mf{x}$ the main novelties and advantages with respect to the functor $\mf{a}$ 
constructed in \cite[Thm. 1.6.24]{27sil} are represented by two facts: 
Firstly in order to construct a group associated with a vector field $U$, 
here $U$ needs not to be complete, rather we require $\{\mathsterling_{U}^{k}\,\vert\,k\in\Z_{+}\}$ 
to be $(\tau_{c}^{\infty},\tau_{c}^{\infty})$-equicontinuous, 
by obtaining in this way the additional $C_{0}$-property of $\exp_{M}^{U}$.
Secondly here we select a specific topology on $\mc{D}(M,X)$, then by force on its unitization $\mc{D}_{1}(M,X)$,
by de facto avoiding the problem of introducing what in \cite[Def. 1.6.18]{27sil} we called a $\mr{vf}$-topology.
This because for what just above said and since $\phi^{\ast}$ is $(\tau_{c}^{\infty},\tau_{c}^{\infty})$-
continuous, the $\tau_{c}^{\infty}$-topology satisfies the requirements of a 
$\mr{vf}$-topology provided $\upeta_{M}^{U}$, 
the adjoint on $\mc{D}(M)$ of the flow on $M$ generated by a complete vector field $U$ of $M$, 
be replaced by the group $\exp_{M}^{U}$.
Said that the construction of $\mf{x}$ mimics the one of $\mf{a}$, 
by replacing $\upeta_{M}^{U}$ with $\exp_{M}^{U}$.
At the end of this Introduction we shall see that in special cases $\exp_{M}^{U}$ equals $\upeta_{M}^{U}$.
\par
\textbf{Thm. \ref{12031824}} establishes the main result of section \ref{12111615} and of the entire work, 
namely the existence of the natural transformation 
\begin{equation}
\label{12121947}
\mf{J}\in\mr{Mor}_{\mr{Fct}(\mf{Vf},\mf{Chdv}_{0})}(\ms{x},\ms{z});
\end{equation}
between the classical $0$-species $\ms{x}\coloneqq\ps{\Uppsi}_{0}\circ\mf{x}\circ\mr{I}_{\mf{Vf}}^{\mf{vf}}$ and 
the quantum $0$-species $\ms{z}\coloneqq\ps{\Uppsi}_{0}\circ\mf{z}\circ\mr{I}_{\mf{Vf}}^{\mf{vf}}$, uniquely 
determined by
\begin{equation*}
\begin{aligned}
\mf{J}:\mr{Obj}(\mf{Vf})\ni(\mc{M},U)
&\mapsto
\mf{J}(\mc{M},U)
=
(\un_{\lr{\mc{M}}{U}},\mc{J}_{(\mc{M},U)}^{\dagger},\mc{J}_{(\mc{M},U)});
\\
\mc{J}_{(\mc{M},U)}:\mr{Obj}(\lr{\mc{M}}{U})
&\ni
X\mapsto\mc{Z}_{(\mc{M},U)}^{X};
\\
\mc{J}_{(\mc{M},U)}^{\dagger}:\mr{Obj}(\lr{\mc{M}}{U})
&\ni
X
\mapsto(\mc{Z}_{(\mc{M},U)}^{X})^{\dagger};
\end{aligned}
\end{equation*}
where 
\begin{equation*}
\begin{aligned}
\mc{Z}_{(\mc{M},U)}^{X}:\mc{D}_{1}(M,X)&\to\mf{B}(\mc{M},X)
\\
(f,\lambda)&\mapsto\mathsterling_{[\mr{grad}_{\mc{M}}(f),U]}+\lambda\un;
\end{aligned}
\end{equation*}
and where $\mf{Vf}$ is the full subcategory of $\mf{vf}$ of those $(\mc{M},U)$ for which there exists a frame 
$\{E_{i}\}$ of orthonormal fields of $\mc{M}$ such that 
\begin{equation}
\label{12160257}
(\forall i\in[1,\mr{dim}M]\cap\Z)([U,E_{i}]=\ze).
\end{equation}
Three are the fundamental steps to establish \eqref{12121947}.
\par
First of all Cor. \ref{12040944} by stating that 
\begin{equation*}
\bigl(f\mapsto\mathsterling_{\mr{grad}_{\mc{M}}(f)}\bigr)\in\mf{L}(\mc{D}(M,X),\mf{B}(\mc{M},X));
\end{equation*}
ensures that $\mc{J}_{(\mc{M},U)}(X)$ is a continuous linear map. 
\par
Then what right now stated and \textbf{Thm. \ref{11301607}} by establishing that
\begin{equation}
\label{12120840}
\exp_{M}^{U}(t)\circ\mathsterling_{[\mr{grad}_{\mc{M}}(f),U]}
=
\mathsterling_{[(\mr{grad}_{\mc{M}}\circ\exp_{M}^{U}(t))(f),U]}\circ\exp_{M}^{U}(t);
\end{equation}
ensure that 
\begin{equation*}
\mc{J}_{(\mc{M},U)}\in\mr{Mor}_{\mr{Fct}(\lr{\mc{M}}{U},\mr{tls})}
(\mf{q}_{0}\circ\mr{F}_{\lr{\mc{M}}{U}},\mf{q}_{0}\circ\mc{F}_{\lr{\mc{M}}{U}});
\end{equation*}
which together its adjoint imply 
\begin{equation}
\label{12121945}
\mf{J}(\mc{M},U)\in\mr{Mor}_{\mf{Chdv}_{0}}(\ms{x}(\mc{M},U),\ms{z}(\mc{M},U)).
\end{equation}
It is in order to determine \eqref{12120840} that we require the use of the category $\mf{Vf}$ 
rather than $\mf{vf}$. Specifically hypothesis \eqref{12160257} ensures that the following term 
\begin{equation*}
\sum_{i=1}^{n}\ep_{i}\cdot\mathsterling_{E_{i}}(f)\mathsterling_{[E_{i},U]}
\end{equation*}
in the right side of \eqref{12091643} vanishes.
\par
Finally Lemma \ref{12031345} states that 
\begin{equation*}
\phi^{\ast}\circ\mathsterling_{\mr{grad}_{\mc{M}}(f)}
=
\mathsterling_{(\mr{grad}_{\mc{N}}\circ\phi^{\ast})(f)}\circ\phi^{\ast}.
\end{equation*}
which together \eqref{12121945} represent the core of the proof of the commutativity of the following diagram 
in the category $\mf{Chdv}_{0}$
\begin{equation*}
\xymatrix{
\ms{x}(\mc{N},V)
\ar[rr]^{\mf{J}(\mc{N},V)}
&&
\ms{z}(\mc{N},V)
\\
&&
\\
\ms{x}(\mc{M},U)
\ar[rr]^{\mf{J}(\mc{M},U)}
\ar[uu]^{\ms{x}(\phi)}
&&
\ms{z}(\mc{M},U)
\ar[uu]_{\ms{z}(\phi)}}
\end{equation*}
and \eqref{12121947} follows.
\par
\textbf{Cor. \ref{12061539}} is the main result of the closing section \ref{12120940}, where 
under the hypothesis that $U$ is complete and an additional equicontinuity condition on $\mathsterling_{U}$,
we answer in Cor. \ref{12061539}\eqref{12061539st2} the natural question in the affirmative on whether 
$\exp_{M}^{U}$ equals the adjoint action on $\mc{D}(M)$ of the flow on $M$ generated by $U$.
In the same section we also prove in Lemma \ref{11301957} that under the obvious additional equicontinuity 
request over $\mathsterling_{U}^{\divideontimes}$ the Lie derivative of $U$ on $\mc{C}^{\infty}(M)$, 
the exponential one-parameter group $\mr{Exp}_{M}^{U}$ 
generated by $\mathsterling_{U}^{\divideontimes}$ extends $\exp_{M}^{U}$.
As a result in Prp. \ref{12001100}\eqref{12001100st2} we obtain that 
$\Lambda_{M}^{U}(t)$ restricts to a morphism $\mc{A}\to\mc{A}_{t}$ of left $\mc{C}^{\infty}(M)$-modules
where $\mc{A}\subset\mf{L}(\mc{D}(M))$ is naturally a left $\mc{C}^{\infty}(M)$-module 
such that $\Lambda_{M}^{U}(t)\mc{A}\subseteq\mc{A}$ while $\mc{A}_{t}$ is the 
left $\mc{C}^{\infty}(M)$-module whose underlying group is $\mc{A}$ and external law is given by 
$F\cdot Q\mapsto\mr{Exp}_{M}^{U}(t)(F)\cdot Q$.
\section{Construction of the topological $\ast$-algebra $\mf{B}(\mc{M})$ and the $C_{0}$-group
$\Gamma_{\mc{M}}^{U}$}
\label{12111211}
\begin{definition}
\label{11011937}
Define $\mr{vf}^{\star}$ to be the category such that its object set is the set of the couples $(M,U)$
where $M$ is a manifold and $U$ is a vector field on $M$ such that 
$\{\mathsterling_{U}^{n}\,\vert\,n\in\Z_{+}^{\ast}\}$ is $(\tau_{c}^{\infty},\tau_{c}^{\infty})$-equicontinuous. 
For every $(M,U),(N,V)\in\mr{vf}^{\star}$, 
$\mr{Mor_{vf^{\star}}}((M,U),(N,V))$ is the set of proper smooth maps $\phi:N\to M$ so that $U$ and $V$ are 
$\phi$-related, while for every $(Q,K)\in\mr{vf}^{\star}$ and $\psi\in\mr{Mor_{vf^{\star}}}((N,V),(Q,K))$ 
we set $\psi\circ_{\mr{vf}^{\star}}\phi\coloneqq\phi\circ\psi$.
\end{definition}
Since $\mc{D}(M)$ is sequentially complete we can set the following 
\begin{definition}
\label{11231428}
Let $(M,U)\in\mr{vf}^{\star}$ define 
\begin{equation*}
\exp_{M}^{U}\coloneqq\exp_{\mc{D}(M)}^{\mathsterling_{U}},
\end{equation*}
set $\ov{\exp}_{M}^{U}:\R_{+}\ni t\mapsto\exp(-t)\exp_{M}^{U}(t)$ and 
$\underline{\exp}_{M}^{U}:\R_{+}\ni t\mapsto\exp(-t)\exp_{M}^{U}(-t)$.
Moreover define
$\Lambda_{M}^{U}:\R\to\mr{End_{vct}}(\mf{L}(\mc{D}(M)))$ such that 
\begin{equation*}
\Lambda_{M}^{U}:t\mapsto(T\mapsto\exp_{M}^{U}(t)\circ T\circ\exp_{M}^{U}(-t)).
\end{equation*}
\end{definition}
\begin{remark}
\label{11240847}
Let $(M,U)\in\mr{vf}^{\star}$. Thus for every constant map $c$ on $M$ and $f\in\mc{D}(M)$ we have 
$\exp_{M}^{U}(t)(c\cdot f)=c\cdot\exp_{M}^{U}(t)(f)$, 
since $\mathsterling_{U}(c\cdot f)=c\cdot\mathsterling_{U}(f)$
being $\mathsterling_{U}^{\divideontimes}(c)=\ze$,
by $\mc{D}(M)\hookrightarrow\mc{C}^{\infty}(M)$ and since $\mc{C}^{\infty}(M)$ is a topological algebra.
Moreover $\exp_{M}^{U}$ is a group of $\ast$-automorphisms of $\mc{D}(M)$ indeed its
infinitesimal $\tau_{c}^{\infty}$-generator $\mathsterling_{U}$ is a $\ast$-preserving derivation on $\mc{D}(M)$ 
then the statement follows.
\end{remark}
\begin{lemma}
\label{11011940}
Let $(M,U),(N,V)\in\mr{vf}^{\star}$ and $\phi\in\mr{Mor_{vf^{\star}}}((M,U),(N,V))$ thus 
$\phi^{\ast}\circ\exp_{M}^{U}(t)=\exp_{N}^{V}(t)\circ\phi^{\ast}$ and 
$\mathsterling_{U}\circ\exp_{M}^{U}(t)=\exp_{M}^{U}(t)\circ\mathsterling_{U}$ for every $t\in\R$.
\end{lemma}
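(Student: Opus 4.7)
The plan is to reduce both identities to statements about the power series representation of $\exp_M^U(t)$ and $\exp_N^V(t)$ given in item (2) of the Notation section, and then use continuity of $\phi^\ast$ and of $\mathsterling_U$ to interchange the respective operator with the series limit.

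First I would establish the intertwining at the level of the generators. Because $\phi:N\to M$ is proper and smooth and because $U$ and $V$ are $\phi$-related, $\phi^\ast\in\mf{L}(\mc{D}(M),\mc{D}(N))$ and the identity $\mathsterling_V\circ\phi^\ast=\phi^\ast\circ\mathsterling_U$ on $\mc{D}(M)$ holds; both facts have already been recorded in the Notation section. A routine induction on $k\in\Z_+$ then gives $\mathsterling_V^k\circ\phi^\ast=\phi^\ast\circ\mathsterling_U^k$.

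Next, for the first claim, fix $t\in\R$ and $f\in\mc{D}(M)$. By Definition~\ref{11231428} and item (2) of the Notation section applied to $T=\mathsterling_U\in\mf{L}(\mc{D}(M))$ (whose equicontinuous powers are part of the definition of $\mr{vf}^\star$), one has the convergent expansion
\begin{equation*}
\exp_M^U(t)f=\sum_{k=0}^{\infty}\frac{t^{k}\mathsterling_{U}^{k}}{k!}f
\end{equation*}
in $\mc{D}(M)$. Since $\phi^\ast\in\mf{L}(\mc{D}(M),\mc{D}(N))$ is continuous and linear, it commutes with the convergent series, and applying the intertwining derived above together with the analogous series expansion of $\exp_N^V(t)(\phi^\ast f)$ yields
\begin{equation*}
\phi^{\ast}\bigl(\exp_M^U(t)f\bigr)=\sum_{k=0}^{\infty}\frac{t^{k}\mathsterling_{V}^{k}}{k!}(\phi^{\ast}f)=\exp_N^V(t)(\phi^{\ast}f).
\end{equation*}

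The second identity is even more direct: $\mathsterling_U$ commutes with every power $\mathsterling_U^k$, hence with every partial sum of the exponential series for $\exp_M^U(t)$; since $\mathsterling_U\in\mf{L}(\mc{D}(M))$ is continuous, passing to the limit in $\mc{D}(M)$ gives $\mathsterling_U\circ\exp_M^U(t)=\exp_M^U(t)\circ\mathsterling_U$. I do not foresee any genuine obstacle — the only point requiring minor care is the legitimacy of pulling $\phi^\ast$ inside the infinite sum, and this is immediate from continuity and linearity of $\phi^\ast$ applied to a sequence converging in $\mc{D}(M)$.
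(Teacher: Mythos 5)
Your proposal is correct and follows essentially the same route as the paper: the paper's proof likewise reduces both identities to the generator-level intertwining $\phi^{\ast}\circ\mathsterling_{U}=\mathsterling_{V}\circ\phi^{\ast}$ coming from $\phi$-relatedness, and then invokes the $\tau_{c}^{\infty}$-continuity of $\phi^{\ast}$ (respectively of $\mathsterling_{U}$) to pass through the exponential series; you have merely written out the induction on powers and the series manipulation that the paper leaves implicit.
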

\begin{proof}
Since $U$ and $V$ are $\phi$-related we have that 
$\phi^{\ast}\circ\mathsterling_{U}=\mathsterling_{V}\circ\phi^{\ast}$, thus the first equality follows since 
$\phi^{\ast}$ is $\tau_{c}^{\infty}$-continuous, the second equality follows since $\mathsterling_{U}$
is $\tau_{c}^{\infty}$-continuous.
\end{proof}
\begin{lemma}
\label{11231514}
Let $X$ be a Montel space, $Y$ a topological space, $U,V:Y\to\mf{L}_{s}(X)$ continuous at $t_{0}\in Y$ 
and such that $\{U(t)\,\vert\,t\in Y\}$ is equicontinuous. 
If the neighbourhood filter of $t_{0}$ in $Y$ admits a countable basis, then 
$Z^{U,V}:Y\to\mf{L}_{s}(\mf{L}_{b}(X))$ is continuous at $t_{0}$, 
where $Z^{U,V}:t\mapsto(T\mapsto U(t)\circ T\circ V(t))$.
\end{lemma}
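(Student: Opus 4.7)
\par
The plan is to verify continuity of $Z^{U,V}$ at $t_0$ sequentially, exploiting the countable
neighbourhood basis hypothesis. A routine check, using that $U(t)$ and $V(t)$ send bounded sets to
bounded sets, shows that $Z^{U,V}(t)\in\mf{L}(\mf{L}_{b}(X))$ for each $t\in Y$. It then suffices
to show, for every sequence $t_n\to t_0$, every $T\in\mf{L}(X)$, every bounded $B\subseteq X$ and
every continuous seminorm $p$ on $X$, that
$\sup_{x\in B}p\bigl(U(t_n)TV(t_n)x-U(t_0)TV(t_0)x\bigr)\to 0$.
I would split the difference as
\[
U(t_n)\bigl[T\bigl(V(t_n)-V(t_0)\bigr)x\bigr]+\bigl(U(t_n)-U(t_0)\bigr)\bigl[TV(t_0)x\bigr]
\]
and treat each summand using the Montel property of $X$.

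\par
For the second summand, $TV(t_0)B$ is bounded as a continuous linear image of a bounded set, hence its
closure is compact by the Montel hypothesis. The equicontinuity of $\{U(t)\,\vert\,t\in Y\}$ together
with pointwise convergence $U(t_n)y\to U(t_0)y$ forces uniform convergence on this compact set, yielding
the required uniform bound in $x\in B$.

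\par
The first summand is the main obstacle and requires upgrading the pointwise continuity of $V$ at $t_0$
to uniformity on the bounded set $B$. The convergent sequence $\{V(t_n)x\}_n$ is bounded for each $x$,
so $\{V(t_n)\,\vert\,n\in\N\}$ is pointwise bounded; since every Montel space is barrelled, the
Banach--Steinhaus theorem renders this family equicontinuous. Pointwise convergence together with
equicontinuity and the compactness of $\overline{B}$ then yields uniform convergence
$V(t_n)x\to V(t_0)x$ on $B$ in any continuous seminorm of $X$. Continuity of $T$ transports this
uniformity to $TV(t_n)x\to TV(t_0)x$, and the equicontinuity of $\{U(t)\}$ absorbs the outer $U(t_n)$:
given $p$, pick a continuous seminorm $p'$ on $X$ with $p(U(t)y)\leq p'(y)$ for all $t\in Y$ and
$y\in X$, and apply the previous step with $p$ replaced by $p'$. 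Combining the two estimates gives
$p_{B}(Z^{U,V}(t_n)T-Z^{U,V}(t_0)T)\to 0$, which is the desired sequential continuity.
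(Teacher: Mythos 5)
Your proof is correct, and it shares the paper's skeleton: the reduction to sequences via the countable neighbourhood basis, the same splitting of $U(t_n)TV(t_n)x-U(t_0)TV(t_0)x$, and the same two uses of the Montel hypothesis (barrelledness for Banach--Steinhaus, boundedness implying precompactness). Where you diverge is in how the uniformity over the bounded set $B$ is extracted. The paper does not chase uniform estimates summand by summand: it only uses the decomposition, together with the equicontinuity of $\{U(t)\,\vert\,t\in Y\}$, to get the \emph{pointwise} convergence $Z_{t_n}T\to Z_{t_0}T$ in $\mf{L}_{s}(X)$, and then applies Banach--Steinhaus once, to the sequence of composite operators $\{Z_{t_n}T\}_{n}$ itself, to upgrade simple convergence to convergence in $\mf{L}_{pc}(X)$, which equals convergence in $\mf{L}_{b}(X)$ because bounded sets of a Montel space are precompact. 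You instead establish uniform convergence on $B$ for each summand separately: for the $U$-increment you use the principle that on an equicontinuous set simple convergence coincides with uniform convergence on precompact sets, applied to $\{U(t_n)\}$ over the compact closure of $TV(t_0)B$; for the $V$-increment you first apply Banach--Steinhaus to $\{V(t_n)\}$ (pointwise bounded since each orbit converges) to gain equicontinuity, then invoke the same principle over $\overline{B}$, transport through $T$, and absorb the outer $U(t_n)$ by equicontinuity. Both routes are valid; the paper's is more economical (one application of Banach--Steinhaus, no need for equicontinuity of $\{V(t_n)\}$), while yours makes the uniform estimates explicit at the cost of invoking the equicontinuity-plus-precompactness principle twice.
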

\begin{proof}
In this proof we let $Z$ denote $Z^{U,V}$ which 
is well-defined namely $Z(t)\in\mf{L}(\mf{L}_{b}(X))$ for every $t\in Y$.
Indeed let $p$ be a continuous seminorm on $X$, $B$ a bounded subset of $X$ and $T\in\mf{L}(X)$, thus
$p_{B}(Z(t)T)=(q^{t})_{C^{t}}(T)$ with $q^{t}=p\circ U(t)$ and $C^{t}=V(t)B$. But $q^{t}$ is a
continuous seminorm of $X$, while $C^{t}$ is bounded in $X$ since $V(t)$ is linear and continuous, thus
$q^{t}_{C^{t}}$ is a continuous seminorm of $\mf{L}_{b}(X)$ and then $Z(t)\in\mf{L}(\mf{L}_{b}(X))$. 
Next assume that the neighbourhood filter of $t_{0}$ in $Y$ admits a countable basis.
Now $X$ is a Montel space thus it is sufficient to prove that for every sequence $\{t_{n}\}$ in $Y$ 
converging at $t_{0}$ and every $T\in\mf{L}(X)$, we have that $Z_{t_{n}}T$ converges at $T$ in $\mf{L}_{pc}(X)$.
Now since the equicontinuity hypothesis, there exists a continuous seminorm $q$ of $X$ such that 
for all $x\in X$ and $n\in\Z_{+}^{\ast}$ we have 
\begin{equation*}
\begin{aligned}
p(Z_{t_{n}}(T)x-Z_{t_{0}}(T)x)
&\leq 
p(U_{t_{n}}(TV_{t_{n}}x-TV_{t_{0}}x))+p((U_{t_{n}}-U_{t_{0}})TV_{t_{0}}x)
\\
&\leq
q(T(V_{t_{n}}x-V_{t_{0}}x))+p((U_{t_{n}}-U_{t_{0}})TV_{t_{0}}x);
\end{aligned}
\end{equation*}
so $p(Z_{t_{n}}(T)x-Z_{t_{0}}(T)x)$ converges at $0$, but $p$ is an arbitrary continuous seminorm on $X$, thus 
$Z_{t_{n}}T$ converges at $Z_{t_{0}}T$ in $\mf{L}_{s}(X)$. 
Finally $X$ is barrelled being Montel, thus by the Banach-Steinhaus Thm. 
we deduce that $Z_{t_{n}}T$ converges at $Z_{t_{0}}T$ in $\mf{L}_{pc}(X)$ which is what we claimed to prove.
\end{proof}
\begin{corollary}
\label{11231634}
Let $(M,U)\in\mr{vf}^{\star}$, thus $\Lambda_{M}^{U}$ is a $C_{0}$-group on $\mf{L}_{b}(\mc{D}(M))$. 
\end{corollary}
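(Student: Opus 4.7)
The plan is to apply Lemma \ref{11231514} locally to obtain the $\mf{L}_{s}(\mf{L}_{b}(\mc{D}(M)))$-continuity of $\Lambda_{M}^{U}$ at every $t_{0}\in\R$, and separately to verify the one-parameter group property directly from that of $\exp_{M}^{U}$ on $\mc{D}(M)$. The algebraic identities are immediate: by the notation section $\exp_{M}^{U}$ extends to a $C_{0}$-group on $\mc{D}(M)$, so $\exp_{M}^{U}(t+s)=\exp_{M}^{U}(t)\circ\exp_{M}^{U}(s)$ and $\exp_{M}^{U}(0)=\un$ for all $s,t\in\R$; a direct computation then yields $\Lambda_{M}^{U}(t+s)=\Lambda_{M}^{U}(t)\circ\Lambda_{M}^{U}(s)$ and $\Lambda_{M}^{U}(0)=\un$, while each $\Lambda_{M}^{U}(t)$ lies in $\mf{L}(\mf{L}_{b}(\mc{D}(M)))$ by the opening paragraph of the proof of Lemma \ref{11231514} applied to the continuous linear operators $\exp_{M}^{U}(\pm t)$.

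For continuity, being a local property, I would fix $t_{0}\in\R$, put $Y\coloneqq[t_{0}-1,t_{0}+1]$, and prepare to apply Lemma \ref{11231514} with $X\coloneqq\mc{D}(M)$ (Montel, as recalled in the notation section), the topological space $Y$ (first countable at $t_{0}$ since metrizable), and the assignments $Y\ni t\mapsto\exp_{M}^{U}(t)$ and $Y\ni t\mapsto\exp_{M}^{U}(-t)$ into $\mf{L}_{s}(\mc{D}(M))$, both continuous at $t_{0}$ because $\exp_{M}^{U}$ is a $C_{0}$-group on $\mc{D}(M)$. The substantive verification is equicontinuity of $\{\exp_{M}^{U}(t)\mid t\in Y\}$; for $(M,U)\in\mr{vf}^{\star}$ the notation section furnishes that both $\ov{\exp}_{M}^{U}$ and $\underline{\exp}_{M}^{U}$ are equicontinuous $C_{0}$-semigroups on $\mc{D}(M)$. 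Hence for any continuous seminorm $q$ on $\mc{D}(M)$ there exist continuous seminorms $p,p'$ with $q(\ov{\exp}_{M}^{U}(s)x)\leq p(x)$ and $q(\underline{\exp}_{M}^{U}(s)x)\leq p'(x)$ for every $s\in\R_{+}$ and $x\in\mc{D}(M)$. Writing $\exp_{M}^{U}(t)=e^{t}\ov{\exp}_{M}^{U}(t)$ when $t\geq 0$ and $\exp_{M}^{U}(t)=e^{-t}\underline{\exp}_{M}^{U}(-t)$ when $t\leq 0$, and using $|t|\leq|t_{0}|+1$ on $Y$, I obtain $q(\exp_{M}^{U}(t)x)\leq e^{|t_{0}|+1}(p(x)+p'(x))$ for every $t\in Y$ and $x\in\mc{D}(M)$, which is the required equicontinuity. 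Lemma \ref{11231514} then yields continuity of $\Lambda_{M}^{U}\up Y$ at $t_{0}$ into $\mf{L}_{s}(\mf{L}_{b}(\mc{D}(M)))$, hence of $\Lambda_{M}^{U}$ at $t_{0}$, completing the argument.

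The main obstacle is essentially a bookkeeping one: Lemma \ref{11231514} demands equicontinuity of the left factor on the whole parameter domain, a property that fails on all of $\R$ in general. Localizing to a compact interval around each $t_{0}$ is what makes the lemma applicable, and extracting the local equicontinuity from the $\R_{+}$-equicontinuity of $\ov{\exp}_{M}^{U}$ and $\underline{\exp}_{M}^{U}$ is the only nontrivial move in the argument.
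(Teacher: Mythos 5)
Your proof is correct, and it rests on the same two pillars as the paper's: Lemma \ref{11231514} and the equicontinuity of the damped semigroups $\ov{\exp}_{M}^{U}$, $\underline{\exp}_{M}^{U}$ recorded in the Notation. The difference is purely in how the equicontinuity hypothesis of the lemma is met. The paper avoids any localization by redistributing the scalar factor: it writes $\Lambda_{M}^{U}\up\R_{+}=Z^{U_{+},V_{+}}$ with $U_{+}=\ov{\exp}_{M}^{U}$ (so the left factor, the only one the lemma requires to be equicontinuous, is equicontinuous on all of $\R_{+}$) and $V_{+}:t\mapsto\exp(t)\exp_{M}^{U}(-t)$ (which only needs pointwise continuity), and similarly $\Lambda_{M}^{U}\up\R_{-}=Z^{U_{-},V_{-}}\circ\mf{i}$; two global applications of the lemma on the half-lines then finish the argument. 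You instead keep the undamped factors $\exp_{M}^{U}(\pm t)$ and restore equicontinuity by restricting to the compact interval $[t_{0}-1,t_{0}+1]$, bounding $e^{|t|}\leq e^{|t_{0}|+1}$ there; this is slightly more hands-on but equally valid, since continuity is local and your estimate from $\ov{\exp}_{M}^{U}$ and $\underline{\exp}_{M}^{U}$ is exactly right. What the paper's factorization buys is the absence of any case analysis over compacta (and it makes the role of the exponential damping explicit in the choice of $U_{\pm},V_{\pm}$); what yours buys is an explicit verification of the group law and of $\Lambda_{M}^{U}(t)\in\mf{L}(\mf{L}_{b}(\mc{D}(M)))$, which the paper leaves implicit in the lemma and in the definitions.
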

\begin{proof}
By letting 
$U_{+}=\ov{\exp}_{M}^{U}$, $V_{+}:\R_{+}\ni t\mapsto\exp(t)\exp_{M}^{U}(-t)$, 
$U_{-}=\underline{\exp}_{M}^{U}$ and $V_{-}:\R_{+}\ni t\mapsto\exp(t)\exp_{M}^{U}(t)$ 
the statement follows by Lemma \ref{11231514} and since $\Lambda_{M}^{U}\up\R_{+}=Z^{U_{+},V_{+}}$ and 
$\Lambda_{M}^{U}\up\R_{-}=Z^{U_{-},V_{-}}\circ\mf{i}$ where $\mf{i}:\R_{-}\ni\lambda\to-\lambda\in\R_{+}$.
\end{proof}
Since $\mc{D}(M)$ is dense in $\mc{H}_{g}$ we can set the following 
\begin{definition}
[The set $\ms{B}(\mc{M})$]
\label{10311558}
Let $\mc{M}=(M,g)$ be a semi-Riemannian manifold, define 
\begin{equation*}
\ms{B}(\mc{M})\coloneqq
\{
T\in\mf{L}(\mc{D}(M))\,\vert\,\mc{D}(M)\subseteq\mr{Dom}(T^{\intercal}), 
T^{\intercal}\mc{D}(M)\subseteq\mc{D}(M), T^{\dagger}\coloneqq 
T^{\intercal}\up\mc{D}(M)\in\mf{L}(\mc{D}(M))
\};
\end{equation*}
where $T^{\intercal}$ is the $\mc{H}_{g}-$adjoint of the operator $T$.
\end{definition}
\begin{remark}
\label{11251447}
Let $\mc{M}=(M,g)$ be a semi-Riemannian manifold, thus since the discussion in Notation 
we deduce that $\mr{DiffOp}^{k}(M)\subset\ms{B}(\mc{M})$ for every $k\in\Z_{+}^{\ast}$.
\end{remark}
\begin{proposition}
[$\ms{B}(\mc{M})$ is a $O^{\ast}$-algebra on $\mc{D}(M)$]
\label{11241301}
Let $\mc{M}=(M,g)$ be a semi-Riemannian manifold, thus $\ms{B}(\mc{M})$ is a $O^{\ast}$-algebra on $\mc{D}(M)$
in particular it is a unital $\ast$-algebra with involution $(\cdot)^{\dagger}$.
\end{proposition}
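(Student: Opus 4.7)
The plan is to verify directly that $\ms{B}(\mc{M})$ satisfies the defining axioms of an $O^{\ast}$-algebra on $\mc{D}(M)$ as given in \cite[Def. 2.1.6]{27sch}, namely that $\ms{B}(\mc{M})$ is a unital $\ast$-subalgebra of the algebra of linear operators $\mc{D}(M)\to\mc{D}(M)$, with involution $T\mapsto T^{\dagger}$ being the restriction to $\mc{D}(M)$ of the Hilbert adjoint in $\mc{H}_{g}$. Every $T\in\ms{B}(\mc{M})$ is by Def. \ref{10311558} a linear map $\mc{D}(M)\to\mc{D}(M)$ whose Hilbert adjoint $T^{\intercal}$ leaves $\mc{D}(M)$ invariant, so only stability under the algebraic operations remains. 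The identity operator is trivially in $\ms{B}(\mc{M})$, and closure under linear combinations follows from the inclusion $(T+\lambda S)^{\intercal}\supseteq T^{\intercal}+\ov{\lambda}S^{\intercal}$ on the intersection of domains, which, combined with $T^{\dagger},S^{\dagger}\in\mf{L}(\mc{D}(M))$, yields $(T+\lambda S)^{\dagger}=T^{\dagger}+\ov{\lambda}S^{\dagger}\in\mf{L}(\mc{D}(M))$.

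For closure under composition, given $T,S\in\ms{B}(\mc{M})$ and $u,v\in\mc{D}(M)$, I would use the invariance $T^{\dagger}v=T^{\intercal}v\in\mc{D}(M)\subseteq\mr{Dom}(S^{\intercal})$ to obtain
\[
\lr{TSu}{v}_{\mc{H}_{g}}=\lr{Su}{T^{\dagger}v}_{\mc{H}_{g}}=\lr{u}{S^{\dagger}T^{\dagger}v}_{\mc{H}_{g}};
\]
hence $v\in\mr{Dom}((TS)^{\intercal})$ with $(TS)^{\intercal}v=S^{\dagger}T^{\dagger}v\in\mc{D}(M)$, so that $(TS)^{\dagger}=S^{\dagger}T^{\dagger}\in\mf{L}(\mc{D}(M))$ as a composition of two maps in $\mf{L}(\mc{D}(M))$. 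This gives $TS\in\ms{B}(\mc{M})$ together with the anti-multiplicativity of $\dagger$.

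For the involution, given $T\in\ms{B}(\mc{M})$ and $u,v\in\mc{D}(M)$, the identity
\[
\lr{T^{\dagger}u}{v}_{\mc{H}_{g}}=\lr{T^{\intercal}u}{v}_{\mc{H}_{g}}=\lr{u}{Tv}_{\mc{H}_{g}},
\]
valid because $u\in\mc{D}(M)\subseteq\mr{Dom}(T^{\intercal})$ and $Tv\in\mc{H}_{g}$, shows that $u\mapsto\lr{T^{\dagger}u}{v}_{\mc{H}_{g}}$ extends to a bounded linear functional on $\mc{H}_{g}$. This simultaneously yields $v\in\mr{Dom}((T^{\dagger})^{\intercal})$, $(T^{\dagger})^{\intercal}v=Tv\in\mc{D}(M)$, and $(T^{\dagger})^{\intercal}\up\mc{D}(M)=T\in\mf{L}(\mc{D}(M))$; therefore $T^{\dagger}\in\ms{B}(\mc{M})$ and $(T^{\dagger})^{\dagger}=T$, so $\dagger$ is a genuine involution, and $\ms{B}(\mc{M})$ is the claimed unital $\ast$-algebra.

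The one point requiring systematic care is the domain bookkeeping for unbounded Hilbert adjoints: every composition of the form $S^{\intercal}T^{\intercal}v$ or $(T^{\dagger})^{\intercal}v$ is legitimate only because the invariance $T^{\intercal}\mc{D}(M)\subseteq\mc{D}(M)$ built into Def. \ref{10311558} places the intermediate vectors back inside $\mc{D}(M)\subseteq\mr{Dom}(S^{\intercal})$. Topological continuity of $\dagger$ in the $\tau_{c}^{\infty}$-topology is already encoded in the requirement $T^{\dagger}\in\mf{L}(\mc{D}(M))$, so no further topological ingredient enters at this stage; the locally convex $\ast$-algebra topology $\tau_{\mc{M}}$ will only appear later, in Thm. \ref{14111827}.
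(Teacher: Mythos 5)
Your proof is correct and follows essentially the same route as the paper: checking stability of $\ms{B}(\mc{M})$ under sums, products and the involution $(\cdot)^{\dagger}$ via the adjoint-domain bookkeeping, with the continuity requirements inherited because $S^{\dagger}+T^{\dagger}$, $S^{\dagger}T^{\dagger}$ and $T$ lie in $\mf{L}(\mc{D}(M))$. The only cosmetic difference is that the paper invokes the general inclusions $(S+T)^{\intercal}\supseteq S^{\intercal}+T^{\intercal}$, $(ST)^{\intercal}\supseteq T^{\intercal}S^{\intercal}$ and closability $\ov{T}\subseteq(T^{\dagger})^{\intercal}$, where you unwind the same facts by explicit inner-product computations.
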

\begin{proof}
Let $S,T\in\ms{B}(\mc{M})$, thus $T$ is closable since $T^{\intercal}$ is densely defined.
Since $(S+T)^{\intercal}\supseteq S^{\intercal}+T^{\intercal}$ 
we obtain $\mc{D}(M)\subseteq\mr{Dom}((S+T)^{\intercal})$
and $(S+T)^{\dagger}=S^{\dagger}+T^{\dagger}\in\mf{L}(\mc{D}(M))$.
Next since $(ST)^{\intercal}\supseteq T^{\intercal}S^{\intercal}$ 
we obtain $\mc{D}(M)\subseteq\mr{Dom}((ST)^{\intercal})$
and $(ST)^{\dagger}=T^{\dagger}S^{\dagger}\in\mf{L}(\mc{D}(M))$.
Finally $\ov{T}=(T^{\intercal})^{\intercal}\subseteq (T^{\dagger})^{\intercal}$ since 
$T^{\dagger}\subseteq T^{\intercal}$, so $\mc{D}(M)\subseteq\mr{Dom}((T^{\dagger})^{\intercal})$ and 
$(T^{\dagger})^{\dagger}=T\in\mf{L}(\mc{D}(M))$.
\end{proof}
\begin{definition}
[The Categories $\mf{vf}_{0}$ and $\mf{vf}$]
\label{11241119}
Define $\mf{vf}_{0}$ to be the unique category whose object set consists of the couples $(\mc{M},U)$ where 
$\mc{M}=(M,g)$ is a semi-Riemannian manifold, $(M,U)\in\mr{vf}^{\star}$ and 
\begin{equation*}
\mu_{g}\circ\mathsterling_{U}=\ze.
\end{equation*}
The morphism set of $\mf{vf}_{0}$ is such that $\mr{Mor}_{\mf{vf}_{0}}((\mc{M},U),(\mc{N},V))$ consists of those
$\phi\in\mr{Mor_{vf^{\star}}}((M,U),(N,V))$ such that $\phi^{\ast}g=g^{\prime}$ where $\mc{N}=(N,g^{\prime})$,
and whose composition is given by $\phi\circ_{\mf{vf}_{0}}\psi=\psi\circ\phi$ with $\circ$ the map composition.
Let $\mf{vf}$ be the subcategory of $\mf{vf}_{0}$ with the same object set and
$\mr{Mor}_{\mf{vf}}((\mc{M},U),(\mc{N},V))$ consisting of those
$\phi\in\mr{Mor}_{\mf{vf}_{0}}((\mc{M},U),(\mc{N},V))$ such that $\phi$ is a diffeomorphism.
\end{definition}
\begin{definition}
\label{10311600}
Let $\mc{M}=(M,g)$ be a semi-Riemannian manifold, define 
\begin{equation*}
\begin{cases}
\omega_{f}:\ms{B}(\mc{M})\to\C,\,T\mapsto\lr{f}{Tf}_{\mc{H}_{g}},\,f\in\mc{D}(M);
\\
\omega_{f}^{Q}:\ms{B}(\mc{M})\to\C,\,T\mapsto\omega_{f}(Q^{\dagger}TQ),\,f\in\mc{D}(M), Q\in\ms{B}(\mc{M});
\\
F_{\mc{M}}\coloneqq\{\{\omega_{f}\,\vert\,f\in B\}\,\vert\,B\in\mr{Bounded}(\mc{D}(M))\}.
\end{cases}
\end{equation*}
\end{definition}
\begin{proposition}
\label{10311601}
Let $\mc{M}=(M,g)$ be a semi-Riemannian manifold, $B\in\mr{Bounded(\mc{D}(M))}$, $T,Q\in\ms{B}(\mc{M})$.
Thus $\{|\omega_{f}(T)|\,\vert\,f\in B\}$ is bounded and $\{\omega_{f}^{Q}\,\vert\,f\in B\}\in F_{\mc{M}}$. 
\end{proposition}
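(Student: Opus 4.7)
The plan is to reduce both claims to two facts: the continuity of the inclusion $\mc{D}(M)\hookrightarrow\mc{H}_{g}$, and the fact that operators in $\ms{B}(\mc{M})$ are continuous on $\mc{D}(M)$ (hence send bounded sets to bounded sets).

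For the first assertion, I would start with Cauchy--Schwarz in $\mc{H}_{g}$, namely
\begin{equation*}
|\omega_{f}(T)|=|\lr{f}{Tf}_{\mc{H}_{g}}|\leq\|f\|_{\mc{H}_{g}}\,\|Tf\|_{\mc{H}_{g}}.
\end{equation*}
Since the inclusion $\mc{D}(M)\hookrightarrow\mc{H}_{g}$ noted in the Notation section is continuous, the map $h\mapsto\|h\|_{\mc{H}_{g}}$ is a continuous seminorm on $\mc{D}(M)$, so it is bounded on $B$. Next, since $T\in\ms{B}(\mc{M})\subseteq\mf{L}(\mc{D}(M))$, the image $T(B)$ is bounded in $\mc{D}(M)$, so $\|Tf\|_{\mc{H}_{g}}$ is likewise bounded for $f\in B$. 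Multiplying these two bounds gives the desired uniform bound on $\{|\omega_{f}(T)|\,\vert\,f\in B\}$.

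For the second assertion, the key observation is that one can shift $Q$ to the other slot of the inner product. Since $Q\in\ms{B}(\mc{M})$ and $T\in\ms{B}(\mc{M})$, we have $TQf\in\mc{D}(M)\subseteq\mr{Dom}(Q^{\intercal})$, and $Q^{\dagger}=Q^{\intercal}\up\mc{D}(M)$ by Def. \ref{10311558}. Hence for every $f\in\mc{D}(M)$,
\begin{equation*}
\omega_{f}^{Q}(T)=\lr{f}{Q^{\dagger}TQf}_{\mc{H}_{g}}=\lr{f}{Q^{\intercal}(TQf)}_{\mc{H}_{g}}=\lr{Qf}{TQf}_{\mc{H}_{g}}=\omega_{Qf}(T).
\end{equation*}
Consequently $\{\omega_{f}^{Q}\,\vert\,f\in B\}=\{\omega_{h}\,\vert\,h\in Q(B)\}$.

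To conclude, it remains only to identify a bounded subset of $\mc{D}(M)$ generating this family. Since $Q\in\mf{L}(\mc{D}(M))$ is continuous and linear, the set $Q(B)$ is bounded in $\mc{D}(M)$, i.e.\ $Q(B)\in\mr{Bounded}(\mc{D}(M))$; therefore $\{\omega_{f}^{Q}\,\vert\,f\in B\}\in F_{\mc{M}}$ by Def. \ref{10311600}. There is no real obstacle here: the only care required is ensuring that the adjoint manipulation in the displayed equality is legal, which is guaranteed precisely by the defining properties $\mc{D}(M)\subseteq\mr{Dom}(Q^{\intercal})$ and $Q^{\dagger}\in\mf{L}(\mc{D}(M))$ of the $O^{\ast}$-algebra $\ms{B}(\mc{M})$.
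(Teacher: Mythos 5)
Your argument is correct and follows essentially the same route as the paper: Cauchy--Schwarz plus continuity of $T$ and of the inclusion $\mc{D}(M)\hookrightarrow\mc{H}_{g}$ for the first claim, and the identity $\omega_{f}^{Q}=\omega_{Qf}$ together with boundedness of $Q(B)$ for the second. The only difference is that you spell out the adjoint manipulation behind $\omega_{f}^{Q}=\omega_{Qf}$, which the paper simply asserts.
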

\begin{proof}
$T(B)$ is $\tau_{c}^{\infty}$-bounded since $T$ is $(\tau_{c}^{\infty},\tau_{c}^{\infty})$-continuous,
thus $B$ and $T(B)$ are $\|\cdot\|_{\mc{H}_{g}}$-bounded since $\mc{D}(M)\hookrightarrow\mc{H}_{g}$, 
so the first part of the statement follows since 
$\sup_{f\in B}|\lr{f}{Tf}_{\mc{H}_{g}}|\leq\sup_{f\in B}\|f\|_{\mc{H}_{g}}\sup_{f\in B}\|Tf\|_{\mc{H}_{g}}$.
The second part follows since $\omega_{f}^{Q}=\omega_{Qf}$ and $Q(B)$ is $\tau_{c}^{\infty}$-bounded. 
\end{proof}
The first part of Prp. \ref{10311601} allows us to give the following 
\begin{definition}
[The Topology $\tau_{\mc{M}}$ on $\ms{B}(\mc{M})$]
\label{14111822}
Let $\mc{M}=(M,g)$ be a semi-Riemannian manifold, define $\tau_{\mc{M}}$ to be the locally convex topology on
$\ms{B}(\mc{M})$ generated by the set of seminorms $\{q^{B}\,\vert\,B\in\mr{Bounded}(\mc{D}(M))\}$ where
$q^{B}:\ms{B}(\mc{M})\to\R_{+}$ $T\mapsto\sup_{f\in B}|\omega_{f}(T)|$.
\end{definition}
\begin{remark}
\label{11191746}
Let $\mc{M}=(M,g)$ be a semi-Riemannian manifold, thus by the polarization formula and since for any locally 
convex space $X$, $\lambda,\mu\in\C$ and $B,C$ bounded subsets of $X$, the set $\lambda B+\mu C$ is bounded 
in $X$ we deduce that the topology $\tau_{\mc{M}}$ is generated by the following set of seminorms 
$\{q^{B,C}\,\vert\,B,C\in\mr{Bounded}(\mc{D}(M))\}$, where 
$q^{B,C}(T)\coloneqq\sup_{(f,g)\in B\times C}|\lr{f}{Tg}_{\mc{H}_{g}}|$.
\end{remark}
\begin{theorem}
[The topological $\ast$-algebra $\mf{B}(\mc{M})$ and the morphism $\mc{T}(\phi)$]
\label{14111827}
Let $\mc{M}=(M,g)$ be a semi-Riemannian manifold, thus
\begin{enumerate}
\item
$\ms{B}(\mc{M})[\tau_{\mc{M}}]\in\mr{tsa}$
\label{14111827st1}
\item
if $\mc{N}=(N,g^{\prime})$ is a semi-Riemannian manifold and $\phi:N\to M$ is a smooth diffeomorphism
such that $\phi^{\ast}g=g^{\prime}$, then by letting
\begin{equation*}
\begin{cases}
\mc{T}(\phi):\ms{B}(\mc{M})\to\ms{B}(\mc{N}),
\\
T\mapsto\phi^{\ast}\circ T\circ(\phi^{-1})^{\ast};
\end{cases}
\end{equation*}
we have that 
\begin{equation*}
\mc{T}(\phi)\in\mr{Mor_{tsa}}(\ms{B}(\mc{M})[\tau_{\mc{M}}],\ms{B}(\mc{N})[\tau_{\mc{N}}]).
\end{equation*}
\label{14111827st2}
\end{enumerate}
\end{theorem}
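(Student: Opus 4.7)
The plan divides the proof into the two parts.

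For part \eqref{14111827st1}, I would invoke Proposition \ref{11241301} to get that $\ms{B}(\mc{M})$ is a unital $\ast$-algebra (via its $O^{\ast}$-algebra structure), and Proposition \ref{10311601} to get that $F_{\mc{M}}$ is a $\ms{B}(\mc{M})$-invariant non-empty subset of $\ms{F}_{\max}$. These are precisely the hypotheses of Schmüdgen's general Lemma 1.5.7 from \cite{27sch}, which produces a locally convex topology turning the $\ast$-algebra into a topological unital $\ast$-algebra. The remaining point is to identify the topology produced there with $\tau_{\mc{M}}$: the seminorms coming from the construction, one for each element $\{\omega_{f}\mid f\in B\}\in F_{\mc{M}}$, are exactly the seminorms $q^{B}$ of Definition \ref{14111822}. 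Hence \eqref{14111827st1} follows.

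For part \eqref{14111827st2}, I would verify directly the four requirements on $\mc{T}(\phi)$. Since $\phi$ is a diffeomorphism, both $\phi$ and $\phi^{-1}$ are smooth and proper, so $\phi^{\ast}\in\mf{L}(\mc{D}(M),\mc{D}(N))$ and $(\phi^{-1})^{\ast}\in\mf{L}(\mc{D}(N),\mc{D}(M))$; hence $\mc{T}(\phi)(T)\in\mf{L}(\mc{D}(N))$. The hypothesis $\phi^{\ast}g=g^{\prime}$ makes $\phi^{\ast}$ extend to a unitary $\mc{H}_{g}\to\mc{H}_{g^{\prime}}$ with $(\phi^{\ast})^{\intercal}=(\phi^{-1})^{\ast}$ (as recalled in the Notation), so a direct adjoint computation yields
\begin{equation*}
(\phi^{\ast}\circ T\circ(\phi^{-1})^{\ast})^{\intercal}\supseteq\phi^{\ast}\circ T^{\intercal}\circ(\phi^{-1})^{\ast},
\end{equation*}
which on $\mc{D}(N)$ restricts to $\phi^{\ast}\circ T^{\dagger}\circ(\phi^{-1})^{\ast}=\mc{T}(\phi)(T^{\dagger})\in\mf{L}(\mc{D}(N))$. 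This simultaneously places $\mc{T}(\phi)(T)$ in $\ms{B}(\mc{N})$ and shows that $\mc{T}(\phi)$ is $\ast$-preserving.

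The algebraic properties (linearity, unitality, multiplicativity) follow immediately from $\phi^{\ast}\circ(\phi^{-1})^{\ast}=\un_{\mc{D}(N)}$ and $(\phi^{-1})^{\ast}\circ\phi^{\ast}=\un_{\mc{D}(M)}$. For continuity, given any bounded $B^{\prime}\subset\mc{D}(N)$, the unitarity of $\phi^{\ast}$ gives
\begin{equation*}
q^{B^{\prime}}_{\mc{N}}(\mc{T}(\phi)(T))=\sup_{f\in B^{\prime}}\bigl|\bigl\langle(\phi^{-1})^{\ast}f,T(\phi^{-1})^{\ast}f\bigr\rangle_{\mc{H}_{g}}\bigr|=q^{(\phi^{-1})^{\ast}(B^{\prime})}_{\mc{M}}(T),
\end{equation*}
and since $(\phi^{-1})^{\ast}:\mc{D}(N)\to\mc{D}(M)$ is continuous, $(\phi^{-1})^{\ast}(B^{\prime})$ is bounded in $\mc{D}(M)$, so $q^{(\phi^{-1})^{\ast}(B^{\prime})}_{\mc{M}}$ is a defining seminorm of $\tau_{\mc{M}}$ and continuity of $\mc{T}(\phi)$ follows. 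The principal obstacle lies in part \eqref{14111827st1}: extracting joint continuity of multiplication directly from the seminorms $q^{B}$ would require delicate estimates, and it is precisely to avoid this that the invariance of $F_{\mc{M}}$ under $\omega_{f}\mapsto\omega_{f}^{Q}$ was isolated in Proposition \ref{10311601}, so that the whole analytic burden is carried by Schmüdgen's Lemma 1.5.7.
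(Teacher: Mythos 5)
Your proposal is correct and follows essentially the same route as the paper: part \eqref{14111827st1} via Prp. \ref{11241301}, Prp. \ref{10311601} and Schm\"udgen's Lemma 1.5.7 applied to $\ms{B}(\mc{M})$ and $F_{\mc{M}}$, and part \eqref{14111827st2} by the adjoint identity $(\phi^{\ast})^{\intercal}=(\phi^{-1})^{\ast}$ giving membership in $\ms{B}(\mc{N})$ and $\ast$-preservation, with continuity obtained from $q^{B^{\prime}}\circ\mc{T}(\phi)=q^{(\phi^{-1})^{\ast}(B^{\prime})}$ and boundedness of $(\phi^{-1})^{\ast}(B^{\prime})$. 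The only cosmetic difference is that you state the adjoint relation as an inclusion $\supseteq$ (which suffices), whereas the paper writes it as an equality, legitimate since $\phi^{\ast}$ extends to a unitary.
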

\begin{proof}
St. \eqref{14111827st1} follows by Prp. \ref{10311601} and \cite[Lemma 1.5.7]{27sch} 
applied to the unital $\ast$-algebra $\ms{B}(\mc{M})$ and the set $F_{\mc{M}}$. 
Next let $T\in\ms{B}(\mc{M})$ thus we have what 
follows. $\mc{T}(\phi)(T)\in\mf{L}(\mc{D}(N))$ since $\phi^{\ast}$, $T$ and $(\phi^{-1})^{\ast}$ are all linear 
and $\tau_{c}^{\infty}$-contiuous operators. Next by recalling the property 
$(\phi^{\ast})^{\intercal}=(\phi^{-1})^{\ast}$ discussed in Notation, we have
\begin{equation*}
\begin{aligned}
\mc{T}(\phi)(T)^{\intercal}
&=
((\phi^{-1})^{\ast})^{\intercal}\circ T^{\intercal}\circ(\phi^{\ast})^{\intercal}
\\
&=
\phi^{\ast}\circ T^{\intercal}\circ(\phi^{-1})^{\ast}.
\end{aligned}
\end{equation*}
Thus $\mc{D}(N)\subseteq\mr{Dom}(\mc{T}(\phi)(T)^{\intercal})$, 
$\mc{T}(\phi)(T)^{\intercal}\mc{D}(N)\subseteq\mc{D}(N)$ and 
$\mc{T}(\phi)(T)^{\dagger}=\mc{T}(\phi)(T^{\dagger})\in\mf{L}(\mc{D}(N))$.
Therefore $\mc{T}(\phi)$ is well-set namely $\mc{T}(\phi)(T)\in\ms{B}(\mc{N})$, moreover 
$\mc{T}(\phi)$ is a $\ast$-morphism.
Finally let us prove the continuity of $\mc{T}(\phi)$. For every $f\in\mc{D}(N)$ we have that 
\begin{equation*}
\begin{aligned}
\lr{f}{\mc{T}(\phi)(T)f}_{\mc{H}_{g^{\prime}}}
&=
\lr{(\phi^{\ast})^{\intercal}f}{T((\phi^{-1})^{\ast}f)}_{\mc{H}_{g}}
\\
&=
\lr{(\phi^{-1})^{\ast}f}{T((\phi^{-1})^{\ast}f)}_{\mc{H}_{g}};
\end{aligned}
\end{equation*}
but $(\phi^{-1})^{\ast}$ is $(\tau_{c}^{\infty}(N),\tau_{c}^{\infty}(M))$-continuous, therefore 
$(\phi^{-1})^{\ast}(B)$ is $\tau_{c}^{\infty}(M)$-bounded for every $\tau_{c}^{\infty}(N)$-bounded set $B$
hence $\mc{T}(\phi)$ is $(\tau_{\mc{M}},\tau_{\mc{N}})$-continuous and st. \eqref{14111827st2} follows.
\end{proof}
The above result justifies the following
\begin{definition}
\label{11270809}
Let $\mc{M}=(M,g)$ be a semi-Riemannian manifold, define $\mf{B}(\mc{M})$ to be the unital topological 
$\ast$-algebra $\ms{B}(\mc{M})[\tau_{\mc{M}}]$.
\end{definition}
\begin{proposition}
\label{11271148}
Let $\mc{M}=(M,g)$ be a semi-Riemannian manifold, thus 
$\ms{B}(\mc{M})\subset\mc{L}(\mc{D}(M)_{\ms{B}(\mc{M})},\mc{D}(M)_{\ms{B}(\mc{M})}^{+})$ and 
$\ms{B}(\mc{M})[\tau_{b}]$ is a unital topological $\ast$-algebra such that 
$\ms{B}(\mc{M})[\tau_{b}]\hookrightarrow\mf{B}(\mc{M})$.
\end{proposition}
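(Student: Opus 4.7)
The statement combines three assertions: (i) the set-theoretic inclusion $\ms{B}(\mc{M})\subset\mc{L}(\mc{D}(M)_{\ms{B}(\mc{M})},\mc{D}(M)_{\ms{B}(\mc{M})}^{+})$; (ii) that $\ms{B}(\mc{M})[\tau_{b}]$ is a unital topological $\ast$-algebra; (iii) continuity of the identity map $\ms{B}(\mc{M})[\tau_{b}]\to\mf{B}(\mc{M})$. The overall strategy is to lean on the $O^{\ast}$-algebra structure supplied by Prp.~\ref{11241301} together with the general framework of \cite[Def. 3.2.1 and p.76]{27sch}, so that (i) and (ii) reduce essentially to citations and (iii) becomes a transparent comparison of bounded sets.

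For (i), since $\ms{B}(\mc{M})$ is an $O^{\ast}$-algebra on $\mc{D}(M)$ by Prp.~\ref{11241301}, the canonical embedding of any $O^{\ast}$-algebra into $\mc{L}(D_{\mc{A}},D_{\mc{A}}^{+})$ applies verbatim. For (ii), $\tau_{b}$-continuity of the involution follows from the symmetry of the defining seminorms in their two bounded-set arguments, while joint continuity of the multiplication reduces to showing that each $T\in\ms{B}(\mc{M})$ sends a graph-bounded subset $B$ of $\mc{D}(M)_{\ms{B}(\mc{M})}$ into another graph-bounded set: for any $S\in\ms{B}(\mc{M})$ the quantity $\sup_{f\in B}\|STf\|_{\mc{H}_{g}}$ is the graph seminorm associated to $ST\in\ms{B}(\mc{M})$ evaluated on $B$, hence finite; a standard two-step estimate on $|\langle Sf,Tg\rangle_{\mc{H}_{g}}|$ then yields the required joint continuity.

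For (iii), I would compare generating families. By Rmk.~\ref{11191746}, $\tau_{\mc{M}}$ is generated by $q^{B,C}(T)=\sup_{(f,g)\in B\times C}|\langle f,Tg\rangle_{\mc{H}_{g}}|$ with $B,C$ bounded in $\mc{D}(M)[\tau_{c}^{\infty}]$, whereas the $\tau_{b}$-seminorms have the same analytic form but with $B,C$ bounded in the graph topology of $\ms{B}(\mc{M})$ on $\mc{D}(M)$. The crux is that this graph topology is \emph{coarser} than $\tau_{c}^{\infty}$: every defining seminorm $f\mapsto\|Tf\|_{\mc{H}_{g}}$ with $T\in\ms{B}(\mc{M})$ is $\tau_{c}^{\infty}$-continuous, because $T\in\mf{L}(\mc{D}(M))$ and $\mc{D}(M)\hookrightarrow\mc{H}_{g}$. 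Hence every $\tau_{c}^{\infty}$-bounded subset of $\mc{D}(M)$ is graph-bounded, so every $\tau_{\mc{M}}$-seminorm appears among the $\tau_{b}$-seminorms, yielding continuity of the inclusion. The main obstacle I anticipate is the joint continuity in (ii), since multiplication is not even defined on the full ambient space $\mc{L}(D,D^{+})$; the graph-bounded-image argument above is precisely what keeps us inside the $O^{\ast}$-subalgebra where the Schmüdgen framework applies.
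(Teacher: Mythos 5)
Your parts (i) and (iii) are sound and essentially coincide with the paper's own argument: for (iii) the paper observes that the graph topology of the $O^{\ast}$-algebra $\ms{B}(\mc{M})$ is the weakest locally convex topology $\tau$ on $\mc{D}(M)$ with $\ms{B}(\mc{M})\subset\mf{L}(\mc{D}(M)[\tau],\mc{H}_{g})$, hence $\mc{D}(M)\hookrightarrow\mc{D}(M)_{\ms{B}(\mc{M})}$ and $\mr{Bounded}(\mc{D}(M))\subseteq\mr{Bounded}(\mc{D}(M)_{\ms{B}(\mc{M})})$, which is exactly your ``each graph seminorm $f\mapsto\|Tf\|_{\mc{H}_{g}}$ is $\tau_{c}^{\infty}$-continuous'' argument; for (i) the paper likewise just notes that $(f,h)\mapsto\lr{Tf}{h}_{\mc{H}_{g}}$ is jointly continuous for the graph topology.

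The gap is in (ii). The paper does not prove this part by hand: it invokes \cite[Prp. 3.3.10]{27sch} for the $O^{\ast}$-algebra $\ms{B}(\mc{M})$, and the notion of topological $\ast$-algebra in force (the one underlying $\mr{tsa}$, via \cite{27fra}) requires only \emph{separately} continuous multiplication. You instead claim \emph{joint} continuity, and your proposed justification does not deliver it. What your key observation (each $T\in\ms{B}(\mc{M})$ maps graph-bounded sets to graph-bounded sets, because $ST\in\ms{B}(\mc{M})$) actually yields is separate continuity: writing $\lr{STf}{h}_{\mc{H}_{g}}=\lr{Tf}{S^{\dagger}h}_{\mc{H}_{g}}$ one gets $p_{B}(ST)\leq p_{B\cup S^{\dagger}B}(T)$ for fixed $S$, and $p_{B}(ST)\leq p_{B\cup TB}(S)$ for fixed $T$, where $p_{B}(X)=\sup_{f,h\in B}|\lr{Xf}{h}_{\mc{H}_{g}}|$. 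For joint continuity you would have to control the cross term $(S-S_{0})(T-T_{0})$ uniformly, and the ``two-step estimate'' on $|\lr{Sf}{Tg}_{\mc{H}_{g}}|$ via Cauchy--Schwarz bounds it by $\|Sf\|_{\mc{H}_{g}}\|Tg\|_{\mc{H}_{g}}$, i.e. by graph seminorms of the \emph{vectors}, which are not dominated by any fixed $\tau_{b}$-seminorms of the \emph{operators} as $S,T$ vary; so no uniform estimate in $(S,T)$ results. Joint $\tau_{b}$-continuity of multiplication on an $O^{\ast}$-algebra is not to be expected in general and is in any case not needed here. The fix is simply to downgrade the claim to separate continuity (which your bounded-image argument proves, together with the seminorm symmetry for the involution), or to cite \cite[Prp. 3.3.10]{27sch} as the paper does.
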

\begin{proof}
Let $T\in\ms{B}(\mc{M})$, thus $(f,h)\mapsto\lr{Tf}{h}_{\mc{H}_{g}}$ is clearly jointly continuous w.r.t. 
the graph topology of $\ms{B}(\mc{M})$ on $\mc{D}(M)$ so the inclusion in the statement follows, 
in particular $\ms{B}(\mc{M})[\tau_{b}]$ is well-set and it is a topological $\ast$-algebra since 
\cite[Prp. 3.3.10]{27sch}. 
Next it is well-known that the graph topology of a $O^{\ast}$-algebra $\mc{A}$ on a dense subspace $D$ of a 
Hilbert space $\mc{H}$ is the weakest among all the locally convex topologies $\tau$ on $D$ such that 
$\mc{A}\subset\mf{L}(D[\tau],\mc{H})$. But $\ms{B}(\mc{M})\subset\mf{L}(\mc{D}(M),\mc{H}_{g})$, therefore 
$\mc{D}(M)\hookrightarrow\mc{D}(M)_{\ms{B}(\mc{M})}$. 
In particular $\mr{Bounded}(\mc{D}(M))\subseteq\mr{Bounded}(\mc{D}(M)_{\ms{B}(\mc{M})})$ which implies 
$\ms{B}(\mc{M})[\tau_{b}]\hookrightarrow\mf{B}(\mc{M})$.
\end{proof}
\begin{lemma}
\label{11240851}
Let $(\mc{M},U)\in\mf{vf}_{0}$ with $\mc{M}=(M,g)$ and $t\in\R$, then 
$\exp_{M}^{U}(t)$ extends uniquely
to a unitary operator $\mf{exp}_{\mc{M}}^{U}(t)$ on $\mc{H}_{g}$ such that 
$\mf{exp}_{\mc{M}}^{U}(t)^{\intercal}=\mf{exp}_{\mc{M}}^{U}(-t)$.
\end{lemma}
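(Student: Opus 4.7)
The plan is to exploit the invariance condition $\mu_{g}\circ\mathsterling_{U}=\ze$ appearing in the definition of $\mf{vf}_{0}$ to show that $\mathsterling_{U}$ acts as a skew-symmetric operator on $\mc{D}(M)$ with respect to the $\mc{H}_{g}$-inner product; from there the isometry property of $\exp_{M}^{U}(t)$ on $\mc{D}(M)$ will follow by differentiating $\|\exp_{M}^{U}(t)f\|_{\mc{H}_{g}}^{2}$, and the statement will follow by extension by density.

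First I would check skew-symmetry of $\mathsterling_{U}$. For $f,h\in\mc{D}(M)$ we have $\overline{f}h\in\mc{D}(M)$, and since $\mathsterling_{U}$ is a $\ast$-preserving derivation on $\mc{D}(M)$ (as noted in Remark \ref{11240847}), one computes $\mathsterling_{U}(\overline{f}h)=\overline{\mathsterling_{U}f}\cdot h+\overline{f}\cdot\mathsterling_{U}h$. Applying $\mu_{g}$ and using $\mu_{g}\circ\mathsterling_{U}=\ze$ together with the identity $\lr{u}{v}_{\mc{H}_{g}}=\mu_{g}(\overline{u}v)$ for $u,v\in\mc{D}(M)$, one obtains $\lr{\mathsterling_{U}f}{h}_{\mc{H}_{g}}+\lr{f}{\mathsterling_{U}h}_{\mc{H}_{g}}=0$.

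Next, fix $f\in\mc{D}(M)$ and consider $\varphi(t)\coloneqq\|\exp_{M}^{U}(t)f\|_{\mc{H}_{g}}^{2}$. Since $\exp_{M}^{U}$ is a $C_{0}$-group on $\mc{D}(M)$ with infinitesimal $\tau_{c}^{\infty}$-generator $\mathsterling_{U}$, the map $t\mapsto\exp_{M}^{U}(t)f$ is differentiable in $\mc{D}(M)$ with derivative $\mathsterling_{U}\exp_{M}^{U}(t)f$; by the continuous inclusion $\mc{D}(M)\hookrightarrow\mc{H}_{g}$ it is differentiable into $\mc{H}_{g}$ with the same derivative. Hence $\varphi$ is differentiable with $\varphi^{\prime}(t)=2\,\mr{Re}\,\lr{\mathsterling_{U}\exp_{M}^{U}(t)f}{\exp_{M}^{U}(t)f}_{\mc{H}_{g}}$, which vanishes by the skew-symmetry applied to $\exp_{M}^{U}(t)f\in\mc{D}(M)$. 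Therefore $\varphi$ is constant, so $\|\exp_{M}^{U}(t)f\|_{\mc{H}_{g}}=\|f\|_{\mc{H}_{g}}$ for all $t\in\R$ and $f\in\mc{D}(M)$.

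Finally, since $\exp_{M}^{U}(t)$ is a linear isometry of $\mc{D}(M)\subseteq\mc{H}_{g}$ with respect to the $\mc{H}_{g}$-norm, and $\mc{D}(M)$ is dense in $\mc{H}_{g}$, it extends uniquely to an isometry $\mf{exp}_{\mc{M}}^{U}(t)\in\mf{L}(\mc{H}_{g})$. The group law $\exp_{M}^{U}(t)\circ\exp_{M}^{U}(-t)=\un_{\mc{D}(M)}=\exp_{M}^{U}(-t)\circ\exp_{M}^{U}(t)$ extends by density and continuity to $\mf{exp}_{\mc{M}}^{U}(t)\circ\mf{exp}_{\mc{M}}^{U}(-t)=\un_{\mc{H}_{g}}=\mf{exp}_{\mc{M}}^{U}(-t)\circ\mf{exp}_{\mc{M}}^{U}(t)$, so $\mf{exp}_{\mc{M}}^{U}(t)$ is surjective and hence unitary, and the adjoint identity $\mf{exp}_{\mc{M}}^{U}(t)^{\intercal}=\mf{exp}_{\mc{M}}^{U}(t)^{-1}=\mf{exp}_{\mc{M}}^{U}(-t)$ is immediate. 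The main subtlety I expect is the skew-symmetry derivation, which hinges entirely on the invariance hypothesis $\mu_{g}\circ\mathsterling_{U}=\ze$ singled out in Def. \ref{11241119}; once that identity is in hand, the remaining steps are routine applications of the $C_{0}$-property of $\exp_{M}^{U}$ and of the density of $\mc{D}(M)$ in $\mc{H}_{g}$.
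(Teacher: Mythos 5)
Your proof is correct, but it takes a different route from the paper's. The paper argues globally: from $\mu_{g}\circ\mathsterling_{U}=\ze$ it first deduces the invariance $\mu_{g}\circ\exp_{M}^{U}(t)=\mu_{g}$ of the measure under the whole group, and then uses the fact (Rmk. \ref{11240847}) that $\exp_{M}^{U}(t)$ is a $\ast$-automorphism of the algebra $\mc{D}(M)$, so that $\ov{\exp_{M}^{U}(t)f}\,\exp_{M}^{U}(t)h=\exp_{M}^{U}(t)(\ov{f}h)$, which immediately gives $\lr{\exp_{M}^{U}(t)f}{\exp_{M}^{U}(t)h}_{\mc{H}_{g}}=\lr{f}{h}_{\mc{H}_{g}}$; unitarity on $\mc{D}(M)$ and the adjoint identity then follow as in your last step. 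You instead work infinitesimally: you combine the invariance hypothesis with the $\ast$-preserving derivation property of $\mathsterling_{U}$ to get skew-symmetry of $\mathsterling_{U}$ on $\mc{D}(M)$ with respect to $\lr{\cdot}{\cdot}_{\mc{H}_{g}}$, and then integrate by differentiating $t\mapsto\|\exp_{M}^{U}(t)f\|_{\mc{H}_{g}}^{2}$, using the $C_{0}$-property and the continuous inclusion $\mc{D}(M)\hookrightarrow\mc{H}_{g}$. Both arguments hinge on the same two ingredients (the invariance $\mu_{g}\circ\mathsterling_{U}=\ze$ and the algebraic compatibility of the dynamics with conjugation and multiplication), just deployed at the group level in the paper and at the generator level in your version; the paper's route is shorter because it preserves the full inner product at once and avoids any differentiation of orbit maps, while yours has the minor merit of making the skew-adjointness of the generator explicit and of handling the polarization issue automatically, at the cost of having to justify differentiability of the orbit map and to supply the surjectivity argument (via the group law) that the paper gets for free from preservation of the inner product on a dense subspace together with invertibility of $\exp_{M}^{U}(t)$.
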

\begin{proof}
$\mc{D}(M)\hookrightarrow\mc{K}(M)$ and $\mu_{g}\circ\mathsterling_{U}=\ze$ imply that 
$\mu_{g}\circ\exp_{M}^{U}(t)=\mu_{g}$ for all $t\in\R$, therefore for every $f,h\in\mc{D}(M)$ we have 
\begin{equation}
\label{11240916}
\begin{aligned}
\lr{\exp_{M}^{U}(t)f}{\exp_{M}^{U}(t)h}_{\mc{H}_{g}}
&=
\mu_{g}(\ov{\exp_{M}^{U}(t)f}\exp_{M}^{U}(t)h)
\\
&=
(\mu_{g}\circ\exp_{M}^{U}(t))(\ov{f}h)
\\
&=
\mu_{g}(\ov{f}h)
\\
&=\lr{f}{h}_{\mc{H}_{g}};
\end{aligned}
\end{equation}
where the second equality follows since Rmk. \ref{11240847}. 
Thus $\exp_{M}^{U}(t)$ is a unitary operator on the dense subspace $\mc{D}(M)$ of $\mc{H}_{g}$, 
which then extends uniquely to a unitary operator $\mf{exp}_{\mc{M}}^{U}(t)$ on $\mc{H}_{g}$.
Next since $\mf{exp}_{\mc{M}}^{U}(t)$ is unitarity and since $\exp_{M}^{U}(t)^{-1}=\exp_{M}^{U}(-t)$, 
we have that $\mf{exp}_{\mc{M}}^{U}(t)^{\intercal}\up\mc{D}(M)=\mf{exp}_{\mc{M}}^{U}(-t)\up\mc{D}(M)$
and the equality in the statement follows.
\end{proof}
\begin{corollary}
\label{11261248}
Let $(\mc{M},U)\in\mf{vf}_{0}$ with $\mc{M}=(M,g)$, then there exists 
a unique $C_{0}$-group $\mf{exp}_{\mc{M}}^{U}$ on $\mc{H}_{g}$ of unitary operators 
extending $\exp_{M}^{U}$ and 
whose infinitesimal generator $\mf{l}_{U}$ extends $\mathsterling_{U}$.
\end{corollary}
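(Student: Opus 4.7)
The plan is to assemble $\mf{exp}_{\mc{M}}^{U}$ from the pointwise unitary extensions produced by Lemma \ref{11240851}, and then to verify the three required properties: (i) group law, (ii) strong continuity, and (iii) the generator extends $\mathsterling_{U}$.

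First I would define $\mf{exp}_{\mc{M}}^{U}(t)$ for every $t\in\R$ to be the unique unitary extension of $\exp_{M}^{U}(t)$ to $\mc{H}_{g}$ given by Lemma \ref{11240851}. To obtain the group law, I would use that $\exp_{M}^{U}$ is a group on $\mc{D}(M)$ (Def. \ref{11231428} and the $C_{0}$-group property recalled in the Notation): for fixed $s,t\in\R$ the two bounded operators $\mf{exp}_{\mc{M}}^{U}(s+t)$ and $\mf{exp}_{\mc{M}}^{U}(s)\circ\mf{exp}_{\mc{M}}^{U}(t)$ agree on the dense subspace $\mc{D}(M)$, hence agree on all of $\mc{H}_{g}$; similarly $\mf{exp}_{\mc{M}}^{U}(0)=\un$.

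Next I would establish strong continuity. Since each $\mf{exp}_{\mc{M}}^{U}(t)$ is unitary, the family is uniformly bounded in operator norm by $1$; therefore, by a standard $\ep/3$ argument, strong continuity on $\mc{H}_{g}$ follows once it is verified on the dense subspace $\mc{D}(M)$. For $f\in\mc{D}(M)$, the map $t\mapsto\exp_{M}^{U}(t)f$ is continuous into $\mc{D}(M)[\tau_{c}^{\infty}]$ by the $C_{0}$-group property recalled in the Notation; since $\mc{D}(M)\hookrightarrow\mc{H}_{g}$, this continuity transfers to $\|\cdot\|_{\mc{H}_{g}}$, which gives (i) the $C_{0}$-property on $\mc{H}_{g}$ and (ii) uniqueness, because any $C_{0}$-group of bounded operators on $\mc{H}_{g}$ extending $\exp_{M}^{U}$ must coincide with $\mf{exp}_{\mc{M}}^{U}(t)$ on $\mc{D}(M)$ and hence, by continuity and density, on all of $\mc{H}_{g}$.

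Finally, for the generator: let $\mf{l}_{U}$ denote the infinitesimal generator of $\mf{exp}_{\mc{M}}^{U}$ in $\mc{H}_{g}$. For $f\in\mc{D}(M)=\mr{Dom}(\mathsterling_{U})$, the group property of $\exp_{M}^{U}$ on $\mc{D}(M)$ with generator $\mathsterling_{U}$ (Def. \ref{11231428}) yields
\begin{equation*}
\lim_{t\to 0}\frac{\exp_{M}^{U}(t)f-f}{t}=\mathsterling_{U}f\quad\text{in }\tau_{c}^{\infty}(M),
\end{equation*}
and the inclusion $\mc{D}(M)\hookrightarrow\mc{H}_{g}$ converts this to convergence in $\|\cdot\|_{\mc{H}_{g}}$. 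Therefore $f\in\mr{Dom}(\mf{l}_{U})$ and $\mf{l}_{U}f=\mathsterling_{U}f$, so $\mf{l}_{U}$ extends $\mathsterling_{U}$. The only subtle step is the density-plus-uniform-boundedness reduction used in the strong continuity argument; everything else is a direct assembly from Lemma \ref{11240851}, the group property of $\exp_{M}^{U}$ on $\mc{D}(M)$, and the continuous inclusion $\mc{D}(M)\hookrightarrow\mc{H}_{g}$.
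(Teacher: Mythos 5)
Your proposal is correct and follows essentially the same route as the paper: build $\mf{exp}_{\mc{M}}^{U}(t)$ from the pointwise unitary extensions of Lemma \ref{11240851}, get strong continuity from continuity on the dense subspace $\mc{D}(M)$ (via the $C_{0}$-property of $\exp_{M}^{U}$ and $\mc{D}(M)\hookrightarrow\mc{H}_{g}$) combined with the equicontinuity/uniform boundedness of the unitaries, and obtain $\mf{l}_{U}\supseteq\mathsterling_{U}$ by transporting the $\tau_{c}^{\infty}$-convergence of the difference quotients through the continuous inclusion. Your explicit treatment of the group law, uniqueness, and the generator difference quotients merely spells out steps the paper leaves implicit.
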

\begin{proof}
Since Lemma \ref{11240851} it remains only to prove the $C_{0}$-property and 
$\mf{l}_{U}\supseteq\mathsterling_{U}$. To this end let $f\in\mc{D}(M)$,
thus $t\mapsto\mf{exp}_{\mc{M}}^{U}(t)f$ is $\|\cdot\|_{\mc{H}_{g}}$-continuous
since $t\mapsto\exp_{M}^{U}(t)f$ is $\tau_{c}^{\infty}$-continuous by construction and since 
$\mc{D}(M)\hookrightarrow\mc{H}_{g}$. Next $\mf{exp}_{\mc{M}}^{U}(\R)$ is 
$(\|\cdot\|_{\mc{H}_{g}},\|\cdot\|_{\mc{H}_{g}})$-equicontinuous since isometric, while $\mc{D}(M)$ is dense 
in $\mc{H}_{g}$. Therefore since over equicontinuous sets the uniform structure of simple convergence coincides 
with the uniform structure of simple convergence over a total set, we conclude that for every $h\in\mc{H}_{g}$ 
the map $t\mapsto\mf{exp}_{\mc{M}}^{U}(t)h$ is $\|\cdot\|_{\mc{H}_{g}}$-continuous namely 
$\mf{exp}_{\mc{M}}^{U}$ is a $C_{0}$-group on $\mc{H}_{g}$. 
Finally $\mf{l}_{U}$ extends  the infinitesimal $\tau_{c}^{\infty}$-generator $\mathsterling_{U}$ of 
$\exp_{M}^{U}$ since $\mf{exp}_{\mc{M}}^{U}$ extends $\exp_{M}^{U}$ and since 
$\mc{D}(M)\hookrightarrow\mc{H}_{g}$.
\end{proof}
\begin{corollary}
\label{11241005}
Let $(\mc{M},U)\in\mf{vf}_{0}$ with $\mc{M}=(M,g)$ and $t\in\R$, then 
$\exp_{M}^{U}(t)\in\mf{B}(\mc{M})$ such that $\exp_{M}^{U}(t)^{\dagger}=\exp_{M}^{U}(-t)$, in particular
$\Lambda_{M}^{U}(t)\mf{B}(\mc{M})\subseteq\mf{B}(\mc{M})$.
\end{corollary}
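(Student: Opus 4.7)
The plan is to leverage Lemma \ref{11240851} to compute the Hilbert space adjoint $(\exp_M^U(t))^{\intercal}$ explicitly, then verify the three membership conditions of Def. \ref{10311558}, and finally obtain the ``in particular'' clause as an immediate consequence of the $\ast$-algebra structure established in Prp. \ref{11241301}.

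First I would argue that the adjoint of $\exp_M^U(t)$, viewed as a densely defined operator in $\mc{H}_g$ with domain $\mc{D}(M)$, has full domain $\mc{H}_g$ and coincides with $\mf{exp}_{\mc{M}}^U(-t)$. Indeed, by Lemma \ref{11240851}, $\exp_M^U(t)$ extends to the bounded (unitary) operator $\mf{exp}_{\mc{M}}^U(t)$ on $\mc{H}_g$, so for every $h\in\mc{H}_g$ the functional $f\mapsto\lr{\exp_M^U(t)f}{h}_{\mc{H}_g}$ is $\|\cdot\|_{\mc{H}_g}$-continuous on $\mc{D}(M)$; hence $\mr{Dom}((\exp_M^U(t))^{\intercal})=\mc{H}_g$ and, by continuity and density of $\mc{D}(M)$ in $\mc{H}_g$, $(\exp_M^U(t))^{\intercal}$ equals the Hilbert space adjoint $\mf{exp}_{\mc{M}}^U(t)^{\intercal}$, which by Lemma \ref{11240851} is $\mf{exp}_{\mc{M}}^U(-t)$.

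Next I would verify that $\exp_M^U(t)\in\ms{B}(\mc{M})$. We already have $\exp_M^U(t)\in\mf{L}(\mc{D}(M))$ by construction (Def. \ref{11231428}) and $\mc{D}(M)\subseteq\mc{H}_g=\mr{Dom}((\exp_M^U(t))^{\intercal})$ by the previous step. Restricting the identification $(\exp_M^U(t))^{\intercal}=\mf{exp}_{\mc{M}}^U(-t)$ to $\mc{D}(M)$ gives $(\exp_M^U(t))^{\intercal}\up\mc{D}(M)=\exp_M^U(-t)$, since $\mf{exp}_{\mc{M}}^U(-t)$ extends $\exp_M^U(-t)$. In particular $(\exp_M^U(t))^{\intercal}\mc{D}(M)=\exp_M^U(-t)\mc{D}(M)\subseteq\mc{D}(M)$ and $(\exp_M^U(t))^{\dagger}=\exp_M^U(-t)\in\mf{L}(\mc{D}(M))$ (the latter because $\exp_M^U$ is a $C_0$-group on $\mc{D}(M)$, so each $\exp_M^U(s)$ is continuous). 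Together these establish $\exp_M^U(t)\in\mf{B}(\mc{M})$ with the claimed adjoint relation $\exp_M^U(t)^{\dagger}=\exp_M^U(-t)$.

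For the final assertion, I would use that $\mf{B}(\mc{M})$ is a unital $\ast$-algebra (Prp. \ref{11241301}) together with the unwinding of the definition
\[
\Lambda_M^U(t)(T)=\exp_M^U(t)\circ T\circ\exp_M^U(-t).
\]
Since each of the three factors lies in $\mf{B}(\mc{M})$ by what was just proved (applied at $t$ and $-t$), their composition also lies in $\mf{B}(\mc{M})$, yielding $\Lambda_M^U(t)\mf{B}(\mc{M})\subseteq\mf{B}(\mc{M})$. I do not anticipate any serious obstacle: the only subtle point is the identification between the unbounded-adjoint $(\exp_M^U(t))^{\intercal}$ and the bounded adjoint $\mf{exp}_{\mc{M}}^U(t)^{\intercal}$, which is handled cleanly by the density of $\mc{D}(M)$ in $\mc{H}_g$ and the unitarity furnished by Lemma \ref{11240851}.
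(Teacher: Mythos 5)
Your proposal is correct and follows essentially the same route as the paper: the paper simply cites Cor.~\ref{11261248} (whose content is the unitary extension of Lemma~\ref{11240851} with $\mf{exp}_{\mc{M}}^{U}(t)^{\intercal}=\mf{exp}_{\mc{M}}^{U}(-t)$), and your argument just spells out the details the paper leaves implicit --- the full-domain adjoint identification, its restriction to $\mc{D}(M)$ being $\exp_{M}^{U}(-t)$, and closure of $\ms{B}(\mc{M})$ under composition for the final clause.
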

\begin{proof}
Since Cor. \ref{11261248}.
\end{proof}
Cor. \ref{11241005} allows to give the following 
\begin{definition}
[The Group $\Gamma_{\mc{M}}^{U}$]
\label{11241536}
Let $(\mc{M},U)\in\mf{vf}_{0}$, define $\Gamma_{\mc{M}}^{U}:\R\to\mr{End_{vct}}(\mf{B}(\mc{M}))$ 
such that 
\begin{equation*}
\Gamma_{\mc{M}}^{U}:t\mapsto(T\mapsto\Lambda_{M}^{U}(t)(T)),
\end{equation*}
where $M$ is the manifold underlying $\mc{M}$.
\end{definition}
\begin{lemma}
\label{11250550} 
Let $\mc{M}$ be a semi-Riemannian manifold, thus the topology $\tau_{\mc{M}}$ is generated 
by a collection of seminorms extending to $\mf{L}_{b}(\mc{D}(M))$-continuous seminorms.
\end{lemma}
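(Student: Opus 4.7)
The plan is to verify that the generating seminorms of $\tau_{\mc{M}}$ given in Definition \ref{14111822} themselves already admit extensions to continuous seminorms on $\mf{L}_{b}(\mc{D}(M))$: for each bounded $B \subseteq \mc{D}(M)$ I extend $q^{B}$ by using its own defining formula, which makes sense for \emph{any} $T \in \mf{L}(\mc{D}(M))$ because $\mc{D}(M) \hookrightarrow \mc{H}_{g}$. Since $\{q^{B}\,\vert\,B\in\mr{Bounded}(\mc{D}(M))\}$ generates $\tau_{\mc{M}}$ by Definition \ref{14111822}, this immediately matches the shape of the lemma.

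Concretely, for each bounded $B \subseteq \mc{D}(M)$ set
\begin{equation*}
\tilde{q}^{B}\colon\mf{L}(\mc{D}(M))\to\R_{+},\qquad \tilde{q}^{B}(T)\coloneqq\sup_{f\in B}|\lr{f}{Tf}_{\mc{H}_{g}}|.
\end{equation*}
Finiteness is secured as follows: since $T$ is $\tau_{c}^{\infty}$-continuous, $T(B)$ is bounded in $\mc{D}(M)$ and hence, via $\mc{D}(M)\hookrightarrow\mc{H}_{g}$, in $\mc{H}_{g}$; writing $K\coloneqq\sup_{f\in B}\|f\|_{\mc{H}_{g}}$ and letting $r$ denote the continuous seminorm on $\mc{D}(M)$ obtained by pulling $\|\cdot\|_{\mc{H}_{g}}$ back along the same inclusion, Cauchy--Schwarz yields
\begin{equation*}
\tilde{q}^{B}(T)\leq K\cdot\sup_{f\in B}r(Tf)=K\cdot r_{B}(T).
\end{equation*}
Subadditivity and absolute homogeneity of $\tilde{q}^{B}$ are inherited from $\lr{\cdot}{\cdot}_{\mc{H}_{g}}$, and $\tilde{q}^{B}\up\ms{B}(\mc{M})=q^{B}$ by construction. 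The displayed bound, combined with the general fact recalled in the Notation that $r_{B}$ is a continuous seminorm of $\mf{L}_{b}(\mc{D}(M))$ whenever $r$ is a continuous seminorm of $\mc{D}(M)$ and $B$ is bounded, then delivers the continuity of $\tilde{q}^{B}$ on $\mf{L}_{b}(\mc{D}(M))$.

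The main point of care is that $\tilde{q}^{B}$ has to be interpreted on the ambient space $\mf{L}(\mc{D}(M))$ and not on $\ms{B}(\mc{M})$: the polarisation identity from Remark \ref{11191746} is then not needed, since the direct formula $\lr{f}{Tf}_{\mc{H}_{g}}$ does not rely on an adjoint $T^{\dagger}$ (which would be unavailable for a generic $T\in\mf{L}(\mc{D}(M))$). Beyond this, the substantive ingredients are exactly the continuity of the inclusion $\mc{D}(M)\hookrightarrow\mc{H}_{g}$ and the stability of bounded sets under a continuous operator, so I do not anticipate a serious obstacle; the barrelledness of $\mc{D}(M)$ flagged in the Introduction does not enter visibly along this direct route, but would appear if one instead wished to recover the continuity of $r_{B}$ via a Banach--Steinhaus argument on $\mf{L}_{s}(\mc{D}(M))$ rather than reading it off from the definition of the bounded topology.
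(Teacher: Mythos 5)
Your argument is correct, but it is not the route the paper takes. The paper does not extend the seminorms $q^{B}$ themselves: it passes to the polarized family $q^{B,C}$ of Rmk. \ref{11191746}, introduces $\zeta^{B}:\mc{D}(M)\to\R_{+}$, $h\mapsto\sup_{f\in B}|\lr{f}{h}_{\mc{H}_{g}}|$, proves that $\zeta^{B}$ is a continuous seminorm on $\mc{D}(M)$ by viewing it as a lower semicontinuous supremum of the continuous functionals $\lr{f}{\cdot}_{\mc{H}_{g}}\circ\imath_{\mc{D}(M)}^{\mc{H}_{g}}$ and invoking the barrelledness of $\mc{D}(M)$, and then observes that $(\zeta^{B})_{C}=q^{B,C}$ is a continuous seminorm of $\mf{L}_{b}(\mc{D}(M))$ by the general fact recalled in the Notation. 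You instead extend each generating seminorm $q^{B}$ of Def. \ref{14111822} verbatim to $\tilde{q}^{B}$ on all of $\mf{L}(\mc{D}(M))$ and dominate it, via Cauchy--Schwarz, by $K\,r_{B}$ with $r$ the pullback of $\|\cdot\|_{\mc{H}_{g}}$ along the continuous inclusion and $K=\sup_{f\in B}\|f\|_{\mc{H}_{g}}<\infty$; since a seminorm dominated by a continuous seminorm is continuous, this bypasses both the polarization step and barrelledness. All the steps you use check out ($\tilde{q}^{B}$ is finite and a seminorm, restricts to $q^{B}$, and $r_{B}$ is $\mf{L}_{b}(\mc{D}(M))$-continuous by the Notation fact), and your version delivers exactly what the later applications need (Thm. \ref{11191752} and Cor. \ref{12040944} only require that $\tau_{\mc{M}}$-continuity of maps into $\mf{B}(\mc{M})$ can be tested through $\mf{L}_{b}(\mc{D}(M))$). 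The trade-off: your domination argument is the more elementary and shorter; the paper's lower-semicontinuity-plus-barrelledness mechanism establishes continuity of $\zeta^{B}$ without having to exhibit a dominating continuous seminorm in advance, so it is the more robust pattern, and it additionally produces $\mf{L}_{b}$-continuous extensions of the two-variable seminorms $q^{B,C}$, a mild strengthening that is, however, not needed for the results that cite the lemma.
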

\begin{proof}
Let $B$ and $C$ be bounded subsets of $\mc{D}(M)$ and let 
$\zeta^{B}:\mc{D}(M)\to\R_{+}$ $h\mapsto\sup_{f\in B}|\lr{f}{h}_{\mc{H}_{g}}|$ 
finite since $\mc{D}(M)\hookrightarrow\mc{H}_{g}$. 
Now since $\mc{D}(M)\hookrightarrow\mc{K}(M)$ and $h\cdot\mu_{g}\in\mc{K}(M)^{\prime}$ for every $h\in\mc{K}(M)$
we have $\lr{f}{\cdot}_{\mc{H}_{g}}\circ\imath_{\mc{D}(M)}^{\mc{H}_{g}}=
(\ov{f}\cdot\mu_{g})\circ\imath_{\mc{D}(M)}^{\mc{K}(M)}\in\mc{D}(M)^{\prime}$. 
Therefore $\zeta^{B}$ is lower $\tau_{c}^{\infty}$-semicontinuous since superior envelop of 
$\tau_{c}^{\infty}$-continuous maps, hence $\zeta^{B}$ is $\tau_{c}^{\infty}$-continuous 
since $\mc{D}(M)$ is barrelled.
Thus $(\zeta^{B})_{C}$ is a continuous seminorm of $\mf{L}_{b}(\mc{D}(M))$ 
so the statement follows by the fact that $(\zeta^{B})_{C}=q^{B,C}$ and by Rmk. \ref{11191746}.
\end{proof}
\begin{definition}
Let $\mc{M}$ be a semi-Riemannian manifold, define 
$\mf{U}(M)\coloneqq\{T\in\mf{B}(\mc{M})\,\vert\,T^{\dagger}=T^{-1}\}$ 
endowed with the relative topology of $\mf{B}(\mc{M})$ and with the group structure inherited by the 
multiplication on $\mf{B}(\mc{M})$. 
\end{definition}
Notice that in general $\mf{U}(M)$ needs not to be a topological group.
\begin{theorem}
[$\Gamma_{\mc{M}}^{U}$ is a $C_{0}$-group on $\mf{B}(\mc{M})$ of $\ast-$automorphisms]
\label{11191752}
Let $(\mc{M},U),(\mc{N},V)\in\mf{vf}$ and $\phi\in\mr{Mor}_{\mf{vf}}((\mc{M},U),(\mc{N},V))$, thus 
by letting $M$ be the manifold underlying $\mc{M}$, we have 
\begin{enumerate}
\item
$\exp_{M}^{U}\in\mc{C}(\R,\mf{U}(\mc{M}))$ morphism of groups;
\label{11191752st1}
\item
$\Gamma_{\mc{M}}^{U}$ is a $C_{0}$-group on $\mf{B}(\mc{M})$ of $\ast-$automorphisms;
\label{11191752st2}
\item
$\mc{T}(\phi)\circ\Gamma_{\mc{M}}^{U}(t)=\Gamma_{\mc{N}}^{V}(t)\circ\mc{T}(\phi)$, for every $t\in\R$.
\label{11191752st3}
\end{enumerate}
\end{theorem}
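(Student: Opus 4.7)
\emph{Plan.} I would prove the three statements in the order \eqref{11191752st2}, \eqref{11191752st1}, \eqref{11191752st3}, since \eqref{11191752st2} carries the main technical content and the other two largely reduce to it. For \eqref{11191752st2}, Corollary \ref{11241005} already supplies $\Lambda_{M}^{U}(t)\mf{B}(\mc{M})\subseteq\mf{B}(\mc{M})$ together with $\exp_{M}^{U}(t)^{\dagger}=\exp_{M}^{U}(-t)$. From this I first check that each $\Gamma_{\mc{M}}^{U}(t)$ is a $\ast$-endomorphism of $\ms{B}(\mc{M})$: multiplicativity and the group law in $t$ are immediate from Definition \ref{11231428}, while involution is preserved because $(\cdot)^{\dagger}$ is an antimultiplicative involution on $\ms{B}(\mc{M})$ by Proposition \ref{11241301} and because $\exp_{M}^{U}(\pm t)^{\dagger}=\exp_{M}^{U}(\mp t)$. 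Invertibility (hence automorphism) is witnessed by $\Gamma_{\mc{M}}^{U}(-t)$. For the continuity of each $\Gamma_{\mc{M}}^{U}(t)$ on $\mf{B}(\mc{M})$ and the $C_{0}$-property of the group, I invoke Lemma \ref{11250550}, which says that every generating $\tau_{\mc{M}}$-seminorm extends to an $\mf{L}_{b}(\mc{D}(M))$-continuous seminorm. Combined with Corollary \ref{11231634}, which states that $\Lambda_{M}^{U}$ is a $C_{0}$-group on $\mf{L}_{b}(\mc{D}(M))$, this transfers both the individual continuity and the simple-convergence continuity of $t\mapsto\Gamma_{\mc{M}}^{U}(t)T$ from $\mf{L}_{b}(\mc{D}(M))$ down to $\mf{B}(\mc{M})$.

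For \eqref{11191752st1}, the pointwise inclusion $\exp_{M}^{U}(t)\in\mf{U}(\mc{M})$ and the identity $\exp_{M}^{U}(t)^{\dagger}=\exp_{M}^{U}(-t)=\exp_{M}^{U}(t)^{-1}$ are read off Corollary \ref{11241005}, while the morphism-of-groups property is built into Definition \ref{11231428}. Only continuity $\R\to\mf{U}(\mc{M})$ in the relative $\tau_{\mc{M}}$-topology remains, and by Remark \ref{11191746} this reduces to showing that for all bounded $B,C\subset\mc{D}(M)$ the map $t\mapsto\sup_{(f,h)\in B\times C}|\lr{f}{\exp_{M}^{U}(t)h}_{\mc{H}_{g}}|$ is continuous. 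Since $\mc{D}(M)$ is a Montel (hence barrelled) space, $\exp_{M}^{U}$ is $(\tau_{c}^{\infty},\tau_{c}^{\infty})$-equicontinuous on compact time intervals, so a Banach-Steinhaus argument upgrades pointwise continuity to uniform continuity over bounded sets; combined with $\mc{D}(M)\hookrightarrow\mc{H}_{g}$ this yields the claim. Alternatively, \eqref{11191752st1} can be deduced from \eqref{11191752st2} applied to the unit.

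For \eqref{11191752st3}, the equivariance is a direct computation. Unwinding the definitions gives
\begin{equation*}
\mc{T}(\phi)\bigl(\Gamma_{\mc{M}}^{U}(t)T\bigr)
=\phi^{\ast}\circ\exp_{M}^{U}(t)\circ T\circ\exp_{M}^{U}(-t)\circ(\phi^{-1})^{\ast}.
\end{equation*}
Applying Lemma \ref{11011940} to $\phi\in\mr{Mor_{vf^{\star}}}((M,U),(N,V))$ intertwines $\phi^{\ast}\circ\exp_{M}^{U}(t)=\exp_{N}^{V}(t)\circ\phi^{\ast}$. Since $\phi\in\mr{Mor}_{\mf{vf}}$ is a diffeomorphism, $U$ and $V$ are also $\phi^{-1}$-related, so applying the same lemma to $\phi^{-1}$ yields $(\phi^{-1})^{\ast}\circ\exp_{N}^{V}(-t)=\exp_{M}^{U}(-t)\circ(\phi^{-1})^{\ast}$, i.e.\ $\exp_{M}^{U}(-t)\circ(\phi^{-1})^{\ast}=(\phi^{-1})^{\ast}\circ\exp_{N}^{V}(-t)$. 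Substituting both identities produces $\Gamma_{\mc{N}}^{V}(t)\circ\mc{T}(\phi)(T)$, as required.

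\emph{Main obstacle.} The delicate point is \eqref{11191752st2}: the topology $\tau_{\mc{M}}$ is defined through the Hilbert-space pairing on $\mc{H}_{g}$, whereas the available $C_{0}$-group result (Corollary \ref{11231634}) lives in $\mf{L}_{b}(\mc{D}(M))$, whose seminorms are built from arbitrary continuous seminorms of $\mc{D}(M)$ rather than from the inner product. Lemma \ref{11250550} is precisely the bridge that exhibits each $\tau_{\mc{M}}$-seminorm as a restriction of an $\mf{L}_{b}$-continuous one; its correct invocation---and implicitly the use of barrelledness of $\mc{D}(M)$ that underlies it---is the crux of the argument and the only step where something genuine beyond formal manipulation happens.
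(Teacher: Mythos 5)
There is a genuine gap at the crux you yourself single out, namely the individual $(\tau_{\mc{M}},\tau_{\mc{M}})$-continuity of each $\Gamma_{\mc{M}}^{U}(t)$. Lemma \ref{11250550} says that the generating seminorms of $\tau_{\mc{M}}$ extend to $\mf{L}_{b}(\mc{D}(M))$-continuous seminorms, i.e.\ $\tau_{\mc{M}}$ is \emph{coarser} than the topology induced on $\ms{B}(\mc{M})$ by $\mf{L}_{b}(\mc{D}(M))$. Such a comparison transfers continuity of maps \emph{into} the space (which is exactly why it yields the $C_{0}$-property: continuity of $t\mapsto\Lambda_{M}^{U}(t)T$ in $\mf{L}_{b}$, from Cor.\ \ref{11231634}, gives continuity of $t\mapsto\Gamma_{\mc{M}}^{U}(t)T$ in $\tau_{\mc{M}}$), but it does \emph{not} transfer continuity of an operator whose \emph{domain} carries the coarser topology: from $q\circ\Gamma_{\mc{M}}^{U}(t)\leq\tilde p$ with $\tilde p$ only $\mf{L}_{b}$-continuous you cannot conclude $\tau_{\mc{M}}$-continuity of $\Gamma_{\mc{M}}^{U}(t)$ on $\mf{B}(\mc{M})$. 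So the sentence claiming that Lemma \ref{11250550} plus Cor.\ \ref{11231634} gives "both the individual continuity and the simple-convergence continuity" is not justified, and since the paper's definition of a $C_{0}$-group requires each $\Gamma_{\mc{M}}^{U}(t)\in\mf{L}(\mf{B}(\mc{M}))$, this is a real missing step, not a cosmetic one. The paper closes it by a direct seminorm identity you never write down: using $\exp_{M}^{U}(t)^{\dagger}=\exp_{M}^{U}(-t)$ (Cor.\ \ref{11241005}) one computes $q^{B}\circ\Gamma_{\mc{M}}^{U}(t)=q^{\exp_{M}^{U}(-t)B}$, and $\exp_{M}^{U}(-t)B$ is again bounded because $\exp_{M}^{U}(-t)$ is $\tau_{c}^{\infty}$-continuous; this immediately gives $\Gamma_{\mc{M}}^{U}(t)\in\mf{L}(\mf{B}(\mc{M}))$. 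You already have all the ingredients (you use the adjoint identity elsewhere), but as written the argument for this point fails.

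The rest is essentially the paper's route. Your treatment of \eqref{11191752st1} (Banach--Steinhaus on the barrelled space $\mc{D}(M)$, Montel to pass from precompact to bounded sets, then the embedding $\mc{D}(M)\hookrightarrow\mc{H}_{g}$ in place of Lemma \ref{11250550}) and of \eqref{11191752st3} (Lemma \ref{11011940} applied to $\phi$, and to $\phi^{-1}$ or equivalently by inverting) matches the paper's proof in substance. One aside is wrong, though harmless since you only offer it as an alternative: \eqref{11191752st1} cannot be "deduced from \eqref{11191752st2} applied to the unit", because $\Gamma_{\mc{M}}^{U}(t)(\un)=\un$ for all $t$, so no information about $t\mapsto\exp_{M}^{U}(t)$ is obtained that way.
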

\begin{proof}
$\exp_{M}^{U}$ is a morphism of the groups involved in the statement since Cor. \ref{11241005}.
Let us prove the continuity. Since $\exp_{M}^{U}$ is a $C_{0}$-group on $\mc{D}(M)$ by construction
and since $\mc{D}(M)$ is barrelled it follows by the Banach-Steinhaus Thm. that 
$\exp_{M}^{U}\in\mc{C}(\R_{+},\mf{L}_{pc}(\mc{D}(M)))$, 
then $\exp_{M}^{U}\in\mc{C}(\R_{+},\mf{L}_{b}(\mc{D}(M)))$ since $\mc{D}(M)$ is a Montel space, 
therefore st. \eqref{11191752st1} follows by Lemma \ref{11250550}.
Next let $B$ be a $\tau_{c}^{\infty}$-bounded set and $t\in\R$, 
thus since $\exp_{M}^{U}(t)^{\dagger}=\exp_{M}^{U}(-t)$ by Cor. \ref{11241005},
we have $q^{B}\circ\Gamma_{\mc{M}}^{U}(t)=q^{\exp_{M}^{U}(-t)B}$, 
but $\exp_{M}^{U}(-t)B$ is $\tau_{c}^{\infty}$-bounded since $\exp_{M}^{U}(-t)$ is 
$\tau_{c}^{\infty}$-continuous, therefore $\Gamma_{\mc{M}}^{U}(t)\in\mf{L}(\mf{B}(\mc{M}))$.
Thus $\Gamma_{\mc{M}}^{U}$ is a $C_{0}$-group on $\mf{B}(\mc{M})$ 
since Cor. \ref{11231634} and Lemma \ref{11250550}.
Finally $\Gamma_{\mc{M}}^{U}(t)$ is a $\ast$-automorphism of $\mf{B}(\mc{M})$ since Cor. \ref{11241005},
so st. \eqref{11191752st2} is proven. St. \ref{11191752st3} follows since Lemma \ref{11011940}.
\end{proof}
Next we set the following definition of $\dagger$ here used instead of the 
analog one given in \cite[Def. 1.4.14]{27sil}. Clearly the corresponding of \cite[Cor. 1.4.16]{27sil} holds.
\begin{definition}
\label{12051824}
Let $D$ be a category, $\mr{a},\mr{b}\in\mr{Fct}(D,\mr{tls})$ and 
$T\in\prod_{d\in D}\mr{Mor}_{\mr{tls}}(\mr{a}(d),\mr{b}(d))$, then
define $T^{\dagger}\in\prod_{d\in D}\mr{Mor}_{\mr{tls}}(\mr{b}(d)^{\prime},\mr{a}(d)^{\prime})$ 
such that $T^{\dagger}(e)\coloneqq(T(e))^{\dagger}$ for all $e\in D$, where 
$S^{\dagger}(\omega)\coloneqq\omega\circ S$.
\end{definition}
We conclude this section with the existence of the category $\mf{Chdv}_{0}$ 
obtained by forgetting in the category $\mf{Chdv}$ uniquely determined in \cite[Cor. 1.4.18]{27sil} 
the category $\mr{ptls}$ into the category $\mr{tls}$ and the category $\mr{ptsa}$ into the category $\mr{tls}$. 
Then we obtain the corresponding functor $\ps{\Uppsi}_{0}$ 
in analogy with the functor $\ps{\Uppsi}$ in \cite[Thm. 1.4.19]{27sil} 
Before the next result let us recall that for any $\mf{A}\in\mf{dp}$, $\mf{A}^{\dagger}$ is defined in 
\cite[Def. 1.4.13]{27sil} and that since \cite[Thm. 1.4.15]{27sil} $\mf{r}\circ\upsigma_{\mf{A}}^{\dagger}$ 
is well-set. 
\begin{proposition}
[The category $\mf{Chdv}_{0}$]
\label{12131143}
There exists a unique category $\mf{Chdv}_{0}$ whose object set equals the object set of $\mf{dp}$ and 
whose morphism set is such that for every $\mf{A},\mf{B},\mf{C}\in\mf{Chdv}_{0}$ we have 
\begin{equation}
\label{123121370}
\begin{split}
\mr{Mor}_{\mf{Chdv}_{0}}&(\mf{A},\mf{B})=
\\
&\coprod_{f\in\mr{Fct}_{\mr{top}}(\mr{G}_{\mf{B}},\mr{G}_{\mf{A}})}
\mr{Mor}_{\mr{Fct}(\mr{G}_{\mf{B}}^{op},\mr{tls})}
(\mf{r}\circ\upsigma_{\mf{B}}^{\dagger},\mf{r}\circ\upsigma_{\mf{A}}^{\dagger}\circ f)
\times
\mr{Mor}_{\mr{Fct}(\mr{G}_{\mf{B}},\mr{tls})}
(\mf{q}_{0}\circ\upsigma_{\mf{A}}\circ f,\mf{q}_{0}\circ\upsigma_{\mf{B}})
\end{split}
\end{equation}
and 
\begin{equation}
\label{123121130}
\begin{aligned}
(\circ):\mr{Mor}_{\mf{Chdv}_{0}}(\mf{B},\mf{C})&\times 
\mr{Mor}_{\mf{Chdv}_{0}}(\mf{A},\mf{B})\to\mr{Mor}_{\mf{Chdv}_{0}}(\mf{A},\mf{C}),
\\
(g,L,S)&\circ(f,H,T)
\coloneqq
(f\circ g,(H\ast\un_{g})\circ L,S\circ(T\ast \un_{g})).
\end{aligned}
\end{equation}
Moreover the maps $\mf{A}\mapsto\mf{A}$ and 
$(f,T)\mapsto(f,(\un_{\mf{q}_{0}}\ast T)^{\dagger},\un_{\mf{q}_{0}}\ast T)$ 
determine uniquely an element $\ps{\Uppsi}_{0}\in\mr{Fct}(\mf{dp},\mf{Chdv}_{0})$.
\end{proposition}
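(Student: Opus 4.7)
The plan is to mimic the construction of $\mf{Chdv}$ from \cite[Cor. 1.4.18]{27sil}, substituting $\mr{tls}$ for both $\mr{ptls}$ and $\mr{ptsa}$ throughout via the forgetful functors $\mf{r}$ and $\mf{q}_{0}$. In the present setting the morphism components are allowed to be arbitrary continuous linear maps rather than positivity-preserving or $\ast$-algebra homomorphisms, so every natural transformation that worked for $\mf{Chdv}$ remains available after applying these forgetful functors. Consequently the only substantive checks are: (a) that the proposed composition \eqref{123121130} lands in the correct morphism set \eqref{123121370}; (b) that composition is associative with two-sided units; and (c) that $\ps{\Uppsi}_{0}$ is functorial.

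For (a), given $(f,H,T)\in\mr{Mor}_{\mf{Chdv}_{0}}(\mf{A},\mf{B})$ and $(g,L,S)\in\mr{Mor}_{\mf{Chdv}_{0}}(\mf{B},\mf{C})$, the first slot $f\circ g$ is again a $\mr{top}$-quasi enriched functor since such functors compose. For the third slot, the horizontal composition $T\ast\un_{g}$ is a natural transformation from $\mf{q}_{0}\circ\upsigma_{\mf{A}}\circ f\circ g$ to $\mf{q}_{0}\circ\upsigma_{\mf{B}}\circ g$, and vertical composition with $S:\mf{q}_{0}\circ\upsigma_{\mf{B}}\circ g\to\mf{q}_{0}\circ\upsigma_{\mf{C}}$ yields the required natural transformation $\mf{q}_{0}\circ\upsigma_{\mf{A}}\circ(f\circ g)\to\mf{q}_{0}\circ\upsigma_{\mf{C}}$. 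The second slot is handled symmetrically, accounting for the contravariant flip introduced by $(\cdot)^{\dagger}$.

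Associativity of \eqref{123121130} then reduces, slot by slot, to associativity of ordinary functor composition and to the standard interchange and associativity identities for horizontal ($\ast$) and vertical ($\circ$) composition of natural transformations, together with $\un_{g\circ h}=\un_{g}\ast\un_{h}$. The identity of $\mf{A}$ in $\mf{Chdv}_{0}$ is forced to be $(\un_{\mr{G}_{\mf{A}}},\un_{\mf{r}\circ\upsigma_{\mf{A}}^{\dagger}},\un_{\mf{q}_{0}\circ\upsigma_{\mf{A}}})$, and one verifies it is a two-sided unit by direct substitution. Uniqueness of $\mf{Chdv}_{0}$ is then automatic since its objects, morphisms, composition and units are all explicitly prescribed.

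For the functor $\ps{\Uppsi}_{0}$, a morphism $(f,T)\in\mr{Mor}_{\mf{dp}}(\mf{A},\mf{B})$ carries a natural transformation $T:\upsigma_{\mf{A}}\circ f\to\upsigma_{\mf{B}}$ between $\mr{tsa}$-valued functors, so $\un_{\mf{q}_{0}}\ast T$ is a natural transformation of $\mr{tls}$-valued functors fit for the third slot of \eqref{123121370}, and its dagger in the sense of Def. \ref{12051824} fills the second slot. Preservation of identities is the equality $\un_{\mf{q}_{0}}\ast\un_{\upsigma_{\mf{A}}}=\un_{\mf{q}_{0}\circ\upsigma_{\mf{A}}}$, while preservation of composition follows from the interchange law $(\un_{\mf{q}_{0}}\ast T)\ast\un_{g}=\un_{\mf{q}_{0}}\ast(T\ast\un_{g})$ together with compatibility of $\un_{\mf{q}_{0}}\ast(-)$ with vertical composition of natural transformations. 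The principal obstacle is purely bookkeeping: tracking the contravariant flip in the first two slots and keeping the domain/codomain matches aligned; no new analytic input is required beyond what was already carried out for $\mf{Chdv}$.
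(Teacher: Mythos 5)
Your proposal is correct and follows essentially the same route as the paper, which establishes Prp.~\ref{12131143} simply by carrying over the construction of $\mf{Chdv}$ and $\ps{\Uppsi}$ from \cite[Cor. 1.4.18, Thm. 1.4.19]{27sil} with $\mr{ptls}$ and $\mr{ptsa}$ both replaced by $\mr{tls}$ via $\mf{r}$ and $\mf{q}_{0}$, the dagger now taken in the sense of Def.~\ref{12051824} and the well-posedness of $\mf{r}\circ\upsigma_{\mf{A}}^{\dagger}$ supplied by \cite[Thm. 1.4.15]{27sil}. Your explicit checks of the composition's well-definedness, associativity, units, and the functoriality of $\ps{\Uppsi}_{0}$ via whiskering and interchange are exactly the verifications the paper leaves implicit.
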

Next in analogy with \cite[Def. 1.5.8]{27sil} we give the following 
\begin{definition}
[The fibered category of $0-$species]
\label{12041523}
Define $\mf{Sp}_{0}$ the fibered category over $2-\mf{dp}$ such that for all $\mf{D}\in 2-\mf{dp}$
\begin{equation*}
\mf{Sp}_{0}(\mf{D})=2-\mf{dp}(\mf{D},\mf{Chdv}_{0}),
\end{equation*}
moreover set
\begin{equation*}
\mf{Sp}_{\ast}^{0}\coloneqq\{(\mr{a},\mr{b})\in\mf{Sp}_{0}\times\mf{Sp}_{0}
\,\vert\, 
d(\mr{a})=d(\mr{b})
\}.
\end{equation*}
\end{definition}
\section{Construction of the classical and quantum $0$-species $\ms{x}$ and $\ms{z}$} 
\label{12111459}
Since $\exp_{M}^{U}$ is a $C_{0}$-group on $\mc{D}(M)$ we immediatedly obtain the following 
\begin{proposition}
\label{11271635}
Let $(\mc{M},U)\in\mf{vf}_{0}$. Thus there exists a unique 
$\lr{\lr{\mc{M}}{U}}{\mr{F}_{\lr{\mc{M}}{U}}}\in\mf{dp}$ with the following properties.
$\lr{\mc{M}}{U}$ is the unique $\mr{top}$-quasi enriched category with the following properties. 
The object set of $\lr{\mc{M}}{U}$ is the set of all subsets of $M$, 
the morphism set of $\lr{\mc{M}}{U}$ is such that for every $X,Y\in\lr{\mc{M}}{U}$ we have 
$\mr{Mor}_{\lr{\mc{M}}{U}}(X,Y)=\{(X,Y)\}\times\mr{mor}_{\lr{\mc{M}}{U}}(X,Y)$, with
\begin{equation*}
\mr{mor}_{\lr{\mc{M}}{U}}(X,Y)=\{t\in\R\,\vert\,\exp_{M}^{U}(t)\mc{D}(M,X)=\mc{D}(M,Y)\},
\end{equation*}
where we let $M$ be the manifold underlying $\mc{M}$ and $\mc{D}(M,X)$ be the topological sub $\ast$-algebra 
of $\mc{D}(M)$ of those maps whose support is contained in $X$. 
The topology on $\mr{Mor}_{\lr{\mc{M}}{U}}(X,Y)$ is that induced by the topology on $\R$,
while the composition is that inherited by the addition in $\R$. 
While $\mr{F}_{\lr{\mc{M}}{U}}\in\mr{Fct_{top}}(\lr{\mc{M}}{U},\mr{tsa})$ such that for every 
$t\in\mr{mor}_{\lr{\mc{M}}{U}}(X,Y)$ we have 
\begin{equation*}
\begin{cases}
\mr{F}_{\lr{\mc{M}}{U}}(X)=\mc{D}_{1}(M,X),
\\
\mr{F}_{\lr{\mc{M}}{U}}((X,Y),t):\mc{D}_{1}(M,X)\to\mc{D}_{1}(M,Y),
\\
(f,\lambda)\mapsto(\exp_{M}^{U}(t)f,\lambda);
\end{cases}
\end{equation*}
with $\mc{D}_{1}(M,X)\in\mr{tsa}$ the unitization of $\mc{D}(M,X)$.
\end{proposition}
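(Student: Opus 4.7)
The plan is to unwind the proposition into three separate well-definedness checks and then verify functoriality and continuity, with uniqueness being immediate from the fact that every piece of data is prescribed. The only non-formal inputs I will use are: (a) that $\exp_{M}^{U}$ is a one-parameter group of $\ast$-automorphisms of $\mc{D}(M)$ (Rmk. \ref{11240847}); (b) that $\exp_{M}^{U}$ is a $C_{0}$-group on $\mc{D}(M)$ (by construction via Def. \ref{11231428}); (c) that $\mc{D}(M,X)$ carries the subspace topology from $\mc{D}(M)$ and is a topological $\ast$-subalgebra, so its unitization $\mc{D}_{1}(M,X)$ lies in $\mr{tsa}$ with the product topology described in Notation.

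First I would check that $\lr{\mc{M}}{U}$ is indeed a category and a $\mr{top}$-quasi enriched one. For identity, $0\in\mr{mor}_{\lr{\mc{M}}{U}}(X,X)$ because $\exp_{M}^{U}(0)=\un$. For composition, if $t\in\mr{mor}_{\lr{\mc{M}}{U}}(X,Y)$ and $s\in\mr{mor}_{\lr{\mc{M}}{U}}(Y,Z)$, then $\exp_{M}^{U}(s+t)\mc{D}(M,X)=\exp_{M}^{U}(s)\exp_{M}^{U}(t)\mc{D}(M,X)=\exp_{M}^{U}(s)\mc{D}(M,Y)=\mc{D}(M,Z)$, so $s+t\in\mr{mor}_{\lr{\mc{M}}{U}}(X,Z)$. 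Associativity is associativity of addition in $\R$. Since the morphism sets carry the subspace topology from $\R$ and composition is addition, the continuity of composition is automatic.

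Next I would establish $\mr{F}_{\lr{\mc{M}}{U}}$ at the level of objects and morphisms. On objects, $\mc{D}_{1}(M,X)\in\mr{tsa}$ by the Notation paragraph on unitizations applied to the locally convex $\ast$-algebra $\mc{D}(M,X)$. On morphisms, fix $t\in\mr{mor}_{\lr{\mc{M}}{U}}(X,Y)$. By definition of the morphism set, the restriction $\exp_{M}^{U}(t)\up\mc{D}(M,X)$ maps into $\mc{D}(M,Y)$; by Rmk. \ref{11240847} this restriction is a $\ast$-algebra homomorphism, and it is continuous because $\exp_{M}^{U}(t)\in\mf{L}(\mc{D}(M))$ and both $\mc{D}(M,X)$ and $\mc{D}(M,Y)$ carry the subspace topology. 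Extending by $(f,\lambda)\mapsto(\exp_{M}^{U}(t)f,\lambda)$ gives a continuous, unit-preserving $\ast$-morphism $\mc{D}_{1}(M,X)\to\mc{D}_{1}(M,Y)$ (joint continuity follows from the product topology on the unitization). Functoriality $\mr{F}((Y,Z),s)\circ\mr{F}((X,Y),t)=\mr{F}((X,Z),s+t)$ reduces to $\exp_{M}^{U}(s)\circ\exp_{M}^{U}(t)=\exp_{M}^{U}(s+t)$, and the identity at $X$ is sent to the identity of $\mc{D}_{1}(M,X)$ since $\exp_{M}^{U}(0)=\un$.

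The step I expect to carry the most content is verifying the enriched continuity of the morphism map, i.e. that
\begin{equation*}
t\mapsto\mr{F}_{\lr{\mc{M}}{U}}((X,Y),t)\in\mc{C}\bigl(\mr{mor}_{\lr{\mc{M}}{U}}(X,Y),\,\mr{Mor}_{\mr{tsa}}(\mc{D}_{1}(M,X),\mc{D}_{1}(M,Y))_{s}\bigr),
\end{equation*}
where the codomain carries the topology of simple convergence. Fix $(f,\lambda)\in\mc{D}_{1}(M,X)$ and $t_{n}\to t$ in $\mr{mor}_{\lr{\mc{M}}{U}}(X,Y)\subseteq\R$. Since $\exp_{M}^{U}$ is $C_{0}$ on $\mc{D}(M)$, $\exp_{M}^{U}(t_{n})f\to\exp_{M}^{U}(t)f$ in $\tau_{c}^{\infty}$; because all these values lie in the topological subspace $\mc{D}(M,Y)\hookrightarrow\mc{D}(M)$, the convergence holds in $\mc{D}(M,Y)$, and hence in $\mc{D}_{1}(M,Y)$ by the product structure on the unitization. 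This yields continuity at each point of the morphism set, establishing $\mr{F}_{\lr{\mc{M}}{U}}\in\mr{Fct}_{\mr{top}}(\lr{\mc{M}}{U},\mr{tsa})$ and hence $\lr{\lr{\mc{M}}{U}}{\mr{F}_{\lr{\mc{M}}{U}}}\in\mf{dp}$. Uniqueness is immediate since the object sets, morphism sets, topologies, composition, and both components of $\mr{F}_{\lr{\mc{M}}{U}}$ are entirely prescribed by the statement.
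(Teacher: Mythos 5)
Your proposal is correct and follows the same route the paper takes: the paper offers no written proof beyond the remark that the result is immediate because $\exp_{M}^{U}$ is a $C_{0}$-group on $\mc{D}(M)$, and your verification simply spells out the routine checks (category axioms via the group law, $\ast$-morphism property via Rmk. \ref{11240847}, unitization, functoriality) while correctly identifying the $C_{0}$-property as the one substantive input, namely for the continuity of the morphism map into $\mr{Mor}_{\mr{tsa}}$ with the simple convergence topology.
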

\begin{theorem}
\label{11280957}
There exists a unique $\mf{x}\in\mr{Fct}(\mf{vf},\mf{dp})$ 
such that for all $(\mc{M},U),(\mc{N},V)\in\mf{vf}$ and 
$\phi\in\mr{Mor}_{\mf{vf}}((\mc{M},U),(\mc{N},V))$
\begin{enumerate}
\item
$\mf{x}((\mc{M},U))=\lr{\lr{\mc{M}}{U}}{\mr{F}_{\lr{\mc{M}}{U}}}$,
\label{11280957st1}
\item
$\mf{x}(\phi)=(f^{\phi},\ms{T}_{1}^{\phi})$;
\label{11280957st2}
\end{enumerate}
where 
$f^{\phi}\in\mr{Fct_{top}}(\lr{\mc{N}}{V},\lr{\mc{M}}{U})$ 
and 
\begin{equation}
\label{11281040}
\ms{T}_{1}^{\phi}\in\mr{Mor}_{\mr{Fct}(\lr{\mc{N}}{V},\mr{tsa})}
(\mr{F}_{\lr{\mc{M}}{U}}\circ f^{\phi},\mr{F}_{\lr{\mc{N}}{V}});
\end{equation}
such that for all $Y,Z\in\lr{\mc{N}}{V}$ and $t\in\mr{mor}_{\lr{\mc{N}}{V}}(Y,Z)$
\begin{enumerate}
\item
$f^{\phi}(Y)=\phi(Y)$;
\label{11280957st3}
\item
$f^{\phi}((Y,Z),t)=((\phi(Y),\phi(Z)),t)$;
\label{11280957st4}
\item
$\ms{T}^{\phi}(Y):\mc{D}(M,\phi(Y))\to\mc{D}(N,Y)\quad h\mapsto\phi^{\ast}h$;
\label{11280957st5}
\item
$\ms{T}_{1}^{\phi}(Y):\mc{D}_{1}(M,\phi(Y))\to\mc{D}_{1}(N,Y)\quad(h,\lambda)\mapsto(\phi^{\ast}h,\lambda)$;
\label{11280957st6}
\end{enumerate}
where $M$ and $N$ are the manifolds underlying $\mc{M}$ and $\mc{N}$ respectively. In particular 
$\ps{\Uppsi}\circ\mf{x}\in\mf{Sp}(\mf{vf})$ and $\ps{\Uppsi}_{0}\circ\mf{x}\in\mf{Sp}_{0}(\mf{vf})$.
\end{theorem}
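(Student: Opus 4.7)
\emph{Proof plan.} Uniqueness is automatic since the prescriptions in \eqref{11280957st1}--\eqref{11280957st6} determine $\mf{x}$ on all of $\mf{vf}$, and the object map already takes values in $\mf{dp}$ by Prp.~\ref{11271635}. For existence one has to verify, for every $\phi \in \mr{Mor}_{\mf{vf}}((\mc{M},U),(\mc{N},V))$, three things: (a) that $f^{\phi} \in \mr{Fct_{top}}(\lr{\mc{N}}{V},\lr{\mc{M}}{U})$; (b) that $\ms{T}_{1}^{\phi}$ belongs to $\mr{Mor}_{\mr{Fct}(\lr{\mc{N}}{V},\mr{tsa})}(\mr{F}_{\lr{\mc{M}}{U}}\circ f^{\phi},\mr{F}_{\lr{\mc{N}}{V}})$; (c) that $\phi \mapsto (f^{\phi},\ms{T}_{1}^{\phi})$ respects identities and composition. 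The concluding statements $\ps{\Uppsi}\circ\mf{x}\in\mf{Sp}(\mf{vf})$ and $\ps{\Uppsi}_{0}\circ\mf{x}\in\mf{Sp}_{0}(\mf{vf})$ will then be immediate from Def.~\ref{12041523} together with the functoriality of $\ps{\Uppsi}$ and $\ps{\Uppsi}_{0}$.

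For (a), the only substantial point is to show that if $t \in \mr{mor}_{\lr{\mc{N}}{V}}(Y,Z)$, then $t \in \mr{mor}_{\lr{\mc{M}}{U}}(\phi(Y),\phi(Z))$. Since $\phi$ is a diffeomorphism in $\mf{vf}$, its inverse $\phi^{-1}:M\to N$ is also a morphism of $\mf{vf}$, and $(\phi^{-1})^{\ast}:\mc{D}(N)\to\mc{D}(M)$ is a topological $\ast$-isomorphism. The support identity $\mr{supp}((\phi^{-1})^{\ast}f)=\phi(\mr{supp}(f))$ together with its bijectivity yields $(\phi^{-1})^{\ast}\mc{D}(N,X)=\mc{D}(M,\phi(X))$ for every $X\subseteq N$. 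Lemma~\ref{11011940} applied to $\phi^{-1}$ gives the intertwining $(\phi^{-1})^{\ast}\circ\exp_{N}^{V}(t)=\exp_{M}^{U}(t)\circ(\phi^{-1})^{\ast}$; applying $(\phi^{-1})^{\ast}$ to both sides of $\exp_{N}^{V}(t)\mc{D}(N,Y)=\mc{D}(N,Z)$ then delivers $\exp_{M}^{U}(t)\mc{D}(M,\phi(Y))=\mc{D}(M,\phi(Z))$. Continuity of the morphism map of $f^{\phi}$ is immediate since on each $\mr{Mor}_{\lr{\mc{N}}{V}}(Y,Z)$ it is the identity on the $\R$-factor, and composition is preserved since both categories compose by addition in $\R$.

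For (b), the support identity $\mr{supp}(\phi^{\ast}h)=\phi^{-1}(\mr{supp}(h))\subseteq Y$ for $h\in\mc{D}(M,\phi(Y))$ shows that $\phi^{\ast}$ restricts to a continuous $\ast$-morphism $\mc{D}(M,\phi(Y))\to\mc{D}(N,Y)$, whose unitization is precisely $\ms{T}_{1}^{\phi}(Y)$ in $\mr{tsa}$. Naturality in $t\in\mr{mor}_{\lr{\mc{N}}{V}}(Y,Z)$ unfolds to $\phi^{\ast}\circ\exp_{M}^{U}(t)=\exp_{N}^{V}(t)\circ\phi^{\ast}$, again Lemma~\ref{11011940}. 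For (c), identities go to identities by inspection. For composition, take $\phi:(\mc{M},U)\to(\mc{N},V)$ and $\psi:(\mc{N},V)\to(\mc{P},W)$ in $\mf{vf}$, so by Def.~\ref{11241119} one has $\psi\circ_{\mf{vf}}\phi=\phi\circ\psi$ as maps $P\to M$. Unwinding the composition in $\mf{dp}$ of $\mf{x}(\psi)$ and $\mf{x}(\phi)$ one checks that the first component $f^{\phi}\circ f^{\psi}$ sends $Y$ to $\phi(\psi(Y))=f^{\phi\circ\psi}(Y)$, while the second component sends $(h,\lambda)$ to $(\psi^{\ast}\phi^{\ast}h,\lambda)=((\phi\circ\psi)^{\ast}h,\lambda)=\ms{T}_{1}^{\phi\circ\psi}(Y)(h,\lambda)$, as required.

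The only step requiring genuine input is (a), where being a diffeomorphism rather than merely a proper smooth map is essential, in order to transport the support conditions in both directions via $\phi^{\ast}$ and $(\phi^{-1})^{\ast}$; this is precisely why the statement restricts from $\mf{vf}_{0}$ to $\mf{vf}$. All remaining verifications amount to a routine unwinding of definitions together with Lemma~\ref{11011940}.
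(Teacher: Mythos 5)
Your proposal is correct and follows essentially the same route as the paper: both take the stated properties as the definition of $\mf{x}$ and reduce every verification to Lemma \ref{11011940} together with the support behaviour of pullbacks under the diffeomorphism $\phi$, the object map being handled by Prp. \ref{11271635}. The only cosmetic differences are that you obtain the required equality $\exp_{M}^{U}(t)\mc{D}(M,\phi(Y))=\mc{D}(M,\phi(Z))$ by conjugating with $(\phi^{-1})^{\ast}$ where the paper argues elementwise (exhibiting the inclusion), and that you check preservation of composition directly where the paper refers to the analogous argument in \cite[Thm. 1.6.24]{27sil}.
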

\begin{proof}
Let us take the properties of the statement as definition of $\mf{x}$. 
Let $t\in\mr{mor}_{\lr{\mc{N}}{V}}(Y,Z)$ and 
$f\in\mc{D}(M,\phi(Y))$, so since Lemma \ref{11011940} we have 
\begin{equation*}
\begin{aligned}
\phi^{\ast}(\exp_{M}^{U}(t)f)
&=
\exp_{N}^{V}(t)(\phi^{\ast}f)
\\
&\in\exp_{N}^{V}(t)(\mc{D}(N,Y))
\subseteq
\mc{D}(N,Z);
\end{aligned}
\end{equation*}
namely 
\begin{equation*}
\begin{aligned}
\exp_{M}^{U}(t)f\in
(\phi^{-1})^{\ast}\mc{D}(N,Z)
&=
(\phi^{-1})^{\ast}\mc{D}(N,\phi^{-1}(\phi(Z)))
\\
&\subseteq\mc{D}(M,\phi(Z)).
\end{aligned}
\end{equation*}
Therefore $f^{\phi}((Y,Z),t)\in\mr{Mor}_{\lr{\mc{M}}{U}}(\phi(Y),\phi(Z))$, next $f^{\phi}$ is clearly 
continuous and composition preserving so $f^{\phi}\in\mr{Fct_{top}}(\lr{\mc{N}}{V},\lr{\mc{M}}{U})$.
Next $\ms{T}_{1}^{\phi}(Y)$ is continuous since $\phi^{\ast}$ is 
$(\tau_{c}^{\infty}(M),\tau_{c}^{\infty}(N))$-continuous,
moreover for every $f\in\mc{D}(M,\phi(Y))$ and $\lambda\in\C$, since Lemma \ref{11011940} we have 
\begin{equation*}
\begin{aligned}
(\ms{T}_{1}^{\phi}(Z)\circ\mr{F}_{\lr{\mc{M}}{U}}(\phi(Y),\phi(Z),t))(f,\lambda)
&=
((\phi^{\ast}\circ\exp_{M}^{U}(t))f,\lambda)
\\
&=
((\exp_{N}^{V}(t)\circ\phi^{\ast})f,\lambda)
\\
&=
(\mr{F}_{\lr{\mc{N}}{V}}(Y,Z,t)\circ\ms{T}_{1}^{\phi}(Y))(f,\lambda);
\end{aligned}
\end{equation*}
which proves \eqref{11281040}. 
Finally $\mf{x}(\psi\circ_{\mf{vf}}\phi)=\mf{x}(\psi)\circ_{\mf{dp}}\mf{x}(\phi)$ follows by the same 
line of reasoning we use in \cite[Thm. 1.6.24]{27sil} to prove that 
$\mf{a}(\psi\circ_{\ms{vf}_{0}}\phi)=\mf{a}(\psi)\circ_{\mf{dp}}\mf{a}(\phi)$.
\end{proof}
\begin{theorem}
\label{11281921}
Let $(\mc{M},U)\in\mf{vf}_{0}$, 
thus there exists a unique $\lr{\lr{\mc{M}}{U}}{\mc{F}_{\lr{\mc{M}}{U}}}\in\mf{dp}$
with the following properties. $\mc{F}_{\lr{\mc{M}}{U}}\in\mr{Fct_{top}}(\lr{\mc{M}}{U},\mr{tsa})$ 
such that for every subset $X$ and $Y$ of $M$ and every $t\in\mr{mor}_{\lr{\mc{M}}{U}}(X,Y)$ we have 
\begin{equation*}
\begin{cases}
\mc{F}_{\lr{\mc{M}}{U}}(X)=\mf{B}(\mc{M},X),
\\
\mc{F}_{\lr{\mc{M}}{U}}((X,Y),t):\mf{B}(\mc{M},X)\to\mf{B}(\mc{M},Y)\quad T\mapsto\Gamma_{\mc{M}}^{U}(t)T;
\end{cases}
\end{equation*}
where we let $M$ be the manifold underlying $\mc{M}$ and $\mf{B}(\mc{M},X)$ be the topological unital sub 
$\ast$-algebra of $\mf{B}(\mc{M})$ of those $T$ such that $T\mc{D}(M,X)\subseteq\mc{D}(M,X)$ and 
$T^{\dagger}\mc{D}(M,X)\subseteq\mc{D}(M,X)$.
\end{theorem}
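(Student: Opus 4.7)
The plan is to take the prescriptions in the statement as the definition of the candidate pair $\lr{\lr{\mc{M}}{U}}{\mc{F}_{\lr{\mc{M}}{U}}}$ and then verify in four successive stages that it is well-defined and belongs to $\mf{dp}$: (i) $\mf{B}(\mc{M},X)$ endowed with the subspace topology inherited from $\mf{B}(\mc{M})$ is a topological unital sub $\ast$-algebra of $\mf{B}(\mc{M})$; (ii) whenever $t\in\mr{mor}_{\lr{\mc{M}}{U}}(X,Y)$, the restriction of $\Gamma_{\mc{M}}^{U}(t)$ to $\mf{B}(\mc{M},X)$ lands in $\mf{B}(\mc{M},Y)$ and is a continuous unit-preserving $\ast$-morphism; (iii) the assignments are functorial on the morphism variable; (iv) the resulting functor is continuous with respect to the $\mr{top}$-quasi enrichment of both $\lr{\mc{M}}{U}$ and $\mr{tsa}$.

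For stage (i) I would exploit the ring-theoretic identities $(S+T)^{\dagger}=S^{\dagger}+T^{\dagger}$ and $(ST)^{\dagger}=T^{\dagger}S^{\dagger}$ arising inside the proof of Prp. \ref{11241301}, so that the two invariance conditions $T\mc{D}(M,X)\subseteq\mc{D}(M,X)$ and $T^{\dagger}\mc{D}(M,X)\subseteq\mc{D}(M,X)$ are jointly preserved by the $\ast$-algebra operations; the unit obviously belongs to $\mf{B}(\mc{M},X)$, and the subspace topology then makes $\mf{B}(\mc{M},X)$ an object of $\mr{tsa}$.

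For stage (ii) I would first observe that since $\exp_{M}^{U}$ is a group, applying $\exp_{M}^{U}(-t)$ to the defining equality $\exp_{M}^{U}(t)\mc{D}(M,X)=\mc{D}(M,Y)$ yields the companion identity $\exp_{M}^{U}(-t)\mc{D}(M,Y)=\mc{D}(M,X)$. For $T\in\mf{B}(\mc{M},X)$ the chain $\Gamma_{\mc{M}}^{U}(t)T(\mc{D}(M,Y))=\exp_{M}^{U}(t)T\exp_{M}^{U}(-t)\mc{D}(M,Y)\subseteq\exp_{M}^{U}(t)T\mc{D}(M,X)\subseteq\exp_{M}^{U}(t)\mc{D}(M,X)=\mc{D}(M,Y)$ secures the first invariance condition. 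Using the identity $\exp_{M}^{U}(t)^{\dagger}=\exp_{M}^{U}(-t)$ of Cor. \ref{11241005}, one computes $(\Gamma_{\mc{M}}^{U}(t)T)^{\dagger}=\Gamma_{\mc{M}}^{U}(t)(T^{\dagger})$, and the same chain applied to $T^{\dagger}$ delivers the second invariance condition. Continuity together with the unit-preserving $\ast$-morphism property then descend from Thm. \ref{11191752}\eqref{11191752st2} via the subspace topologies on $\mf{B}(\mc{M},X)$ and $\mf{B}(\mc{M},Y)$.

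For stages (iii) and (iv): functoriality is immediate from the one-parameter group property $\Gamma_{\mc{M}}^{U}(t+s)=\Gamma_{\mc{M}}^{U}(t)\circ\Gamma_{\mc{M}}^{U}(s)$ together with $\Gamma_{\mc{M}}^{U}(0)=\un$, while the continuity requirement inherent in being a functor of $\mr{top}$-quasi enriched categories is a direct consequence of the $C_{0}$-property of $\Gamma_{\mc{M}}^{U}$ on $\mf{B}(\mc{M})$ (Thm. \ref{11191752}\eqref{11191752st2}), since the morphism set of $\mr{tsa}$ is enriched with the topology of simple convergence and $\mr{mor}_{\lr{\mc{M}}{U}}(X,Y)$ carries the relative topology from $\R$. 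The only delicacy I anticipate is in step (ii)—tracking the involution under the restriction of $\Gamma_{\mc{M}}^{U}(t)$—but the algebraic and unitarity ingredients for it are already packaged in Prp. \ref{11241301} and Cor. \ref{11241005}.
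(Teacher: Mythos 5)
Your argument is correct and follows essentially the paper's route: the paper disposes of this theorem by citing Thm.~\ref{11191752}\eqref{11191752st2}, and your stages (i)--(iv) merely spell out the routine verifications that citation compresses, using exactly the same ingredients (the identities of Prp.~\ref{11241301}, $\exp_{M}^{U}(t)^{\dagger}=\exp_{M}^{U}(-t)$ from Cor.~\ref{11241005}, and the $C_{0}$-group of $\ast$-automorphisms property of $\Gamma_{\mc{M}}^{U}$). No gaps to report.
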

\begin{proof}
Since Thm. \ref{11191752}\eqref{11191752st2}.
\end{proof}
\begin{theorem}
\label{11290640}
There exists a unique $\mf{z}\in\mr{Fct}(\mf{vf},\mf{dp})$ 
such that for all $(\mc{M},U),(\mc{N},V)\in\mf{vf}$ and $\phi\in\mr{Mor}_{\mf{vf}}((\mc{M},U),(\mc{N},V))$
\begin{enumerate}
\item
$\mf{z}((\mc{M},U))=\lr{\lr{\mc{M}}{U}}{\mc{F}_{\lr{\mc{M}}{U}}}$,
\item
$\mf{z}(\phi)=(f^{\phi},\mf{T}^{\phi})$;
\end{enumerate}
where 
\begin{equation}
\label{11290656}
\mf{T}^{\phi}\in\mr{Mor}_{\mr{Fct}(\lr{\mc{N}}{V},\mr{tsa})}
(\mc{F}_{\lr{\mc{M}}{U}}\circ f^{\phi},\mc{F}_{\lr{\mc{N}}{V}});
\end{equation}
such that for all $Y,Z\in\lr{\mc{N}}{V}$ we have 
\begin{equation*}
\mf{T}^{\phi}(Y):\mf{B}(\mc{M},\phi(Y))\to\mf{B}(\mc{N},Y)\quad T\mapsto\mc{T}(\phi)T.
\end{equation*}
In particular $\ps{\Uppsi}\circ\mf{z}\in\mf{Sp}(\mf{vf})$ and 
$\ps{\Uppsi}_{0}\circ\mf{z}\in\mf{Sp}_{0}(\mf{vf})$.
\end{theorem}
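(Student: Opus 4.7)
The plan is to take the formulas in the statement as the definition of $\mf{z}$ on objects and morphisms, and then verify in turn that (i) the data define a morphism of dynamical patterns, and (ii) composition and identities are preserved. The object assignment already lands in $\mf{dp}$ by Thm. \ref{11281921}, while the first coordinate $f^{\phi}$ of $\mf{z}(\phi)$ was shown to belong to $\mr{Fct_{top}}(\lr{\mc{N}}{V},\lr{\mc{M}}{U})$ in the proof of Thm. \ref{11280957}. Hence the only new content is the statement \eqref{11290656} about $\mf{T}^{\phi}$ together with the multiplicativity of $\mf{z}$ on morphisms.

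For the well-definedness of $\mf{T}^{\phi}(Y)$ as a morphism $\mf{B}(\mc{M},\phi(Y))\to\mf{B}(\mc{N},Y)$ in $\mr{tsa}$, I would invoke Thm. \ref{14111827}\eqref{14111827st2} to obtain the global continuous unit-preserving $\ast$-morphism $\mc{T}(\phi)\in\mr{Mor_{tsa}}(\mf{B}(\mc{M}),\mf{B}(\mc{N}))$, and then check that it restricts to the support-refined subalgebras. For $T\in\mf{B}(\mc{M},\phi(Y))$ and $h\in\mc{D}(N,Y)$, the diffeomorphism property gives $\mr{supp}((\phi^{-1})^{\ast}h)=\phi(\mr{supp}(h))\subseteq\phi(Y)$, so $(\phi^{-1})^{\ast}h\in\mc{D}(M,\phi(Y))$; the support condition on $T$ then yields $T(\phi^{-1})^{\ast}h\in\mc{D}(M,\phi(Y))$, and pullback by $\phi$ brings it into $\mc{D}(N,\phi^{-1}(\phi(Y)))=\mc{D}(N,Y)$. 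The analogous computation for the Hilbert-space adjoint, combined with the identity $\mc{T}(\phi)(T)^{\dagger}=\mc{T}(\phi)(T^{\dagger})$ already established inside Thm. \ref{14111827}, handles the dagger-invariance, and continuity is inherited from $\mc{T}(\phi)$ via the relative topologies on $\mf{B}(\mc{M},\phi(Y))$ and $\mf{B}(\mc{N},Y)$. The naturality square \eqref{11290656} unfolds, for every $t\in\mr{mor}_{\lr{\mc{N}}{V}}(Y,Z)$, to the equivariance identity $\mc{T}(\phi)\circ\Gamma_{\mc{M}}^{U}(t)=\Gamma_{\mc{N}}^{V}(t)\circ\mc{T}(\phi)$, which is precisely Thm. \ref{11191752}\eqref{11191752st3}; here is the place where the careful construction of section \ref{12111211}, and specifically the unitary extension obtained from $\mu_{g}\circ\mathsterling_{U}=\ze$, pays its dividend.

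For the functoriality $\mf{z}(\psi\circ_{\mf{vf}}\phi)=\mf{z}(\psi)\circ_{\mf{dp}}\mf{z}(\phi)$, I would unwind the composition in $\mf{dp}$ analogous to \eqref{123121130}: the first coordinate reduces to $f^{\psi\circ_{\mf{vf}}\phi}=f^{\phi}\circ f^{\psi}$ exactly as in the proof of Thm. \ref{11280957}, and for the second coordinate the identity $(\phi\circ\psi)^{\ast}=\psi^{\ast}\circ\phi^{\ast}$ on pullbacks yields $\mc{T}(\psi\circ_{\mf{vf}}\phi)=\mc{T}(\psi)\circ\mc{T}(\phi)$, which evaluated at each $Y$ is exactly the component of the horizontal-vertical composite $\mf{T}^{\psi}\circ(\mf{T}^{\phi}\ast\un_{f^{\psi}})$. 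Preservation of identities is immediate since $\un_{M}^{\ast}=\un$ on $\mc{D}(M)$. The final sentence then follows from the fact that $\ps{\Uppsi}$ and $\ps{\Uppsi}_{0}$ are functors. The main obstacle I anticipate is the bookkeeping between the global algebras of section \ref{12111211} and their support-refined counterparts $\mf{B}(\mc{M},X)$: although Thm. \ref{14111827} and Thm. \ref{11191752} supply every morphism-theoretic fact needed, one must verify at each step that the operations and the equivariance intertwiners genuinely restrict to the correct subalgebra, which rests on $\phi$ being a diffeomorphism and on the defining condition $\exp_{M}^{U}(t)\mc{D}(M,X)=\mc{D}(M,Y)$ for morphisms in $\lr{\mc{M}}{U}$.
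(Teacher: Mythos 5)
Your proposal is correct and follows essentially the same route as the paper: take the stated data as the definition, verify that $\mc{T}(\phi)$ restricts to the support-refined subalgebras (you do the support computation directly where the paper quotes the restriction formula via Thm.~\ref{11280957}\eqref{11280957st5}), obtain continuity from Thm.~\ref{14111827}\eqref{14111827st2}, reduce the naturality condition \eqref{11290656} to the equivariance of Thm.~\ref{11191752}\eqref{11191752st3}, and get functoriality from $\mc{T}(\phi\circ\psi)=\mc{T}(\psi)\circ\mc{T}(\phi)$. No gaps worth noting.
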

\begin{proof}
Let us take the properties of the statement as definition of $\mf{z}$. 
Let $Q\in\mf{B}(\mc{M},\phi(Y))$ thus since Thm. \ref{11280957}\eqref{11280957st5} we obtain 
\begin{equation*}
\mf{T}^{\phi}(Y)(Q)\up\mc{D}(N,Y)=\mr{T}^{\phi}(Y)\circ Q\circ\mr{T}^{\phi^{-1}}(\phi(Y)),
\end{equation*}
moreover $\mc{T}(\phi)\mf{B}(\mc{M})\subseteq\mf{B}(\mc{N})$ since Thm. \ref{14111827}\eqref{14111827st2},
thus we obtain that $\mf{T}^{\phi}(Y)\mf{B}(\mc{M},\phi(Y))\subseteq\mf{B}(\mc{N},Y)$ and then 
$\mf{T}^{\phi}$ is well-set. Next $\mf{T}^{\phi}(Y)$ is continuous since it is so $\mc{T}(\phi)$
according to Thm. \ref{14111827}\eqref{14111827st2}. Next for every $t\in\mr{mor}_{\lr{\mc{N}}{V}}(Y,Z)$ 
we have by Thm. \ref{11191752}\eqref{11191752st3}
\begin{equation*}
\begin{aligned}
(\mf{T}^{\phi}(Z)\circ\mc{F}_{\lr{\mc{M}}{U}}(\phi(Y),\phi(Z),t))Q
&=
(\mc{T}(\phi)\circ\Gamma_{\mc{M}}^{U}(t))Q
\\
&=
(\Gamma_{\mc{N}}^{V}(t)\circ\mc{T}(\phi))Q
\\
&=
(\mc{F}_{\lr{\mc{N}}{V}}(Y,Z,t)\circ\mf{T}^{\phi}(Y))Q;
\end{aligned}
\end{equation*}
which proves \eqref{11290656}. 
Finally for every $\psi\in\mr{Mor}_{\mf{vf}}$ which is $\mf{vf}-$composable to the left with $\phi$
we have $\mf{z}(\psi\circ_{\mf{vf}}\phi)=\mf{z}(\psi)\circ_{\mf{dp}}\mf{z}(\phi)$ since 
$\mc{T}(\phi\circ\psi)=\mc{T}(\psi)\circ\mc{T}(\phi)$. 
\end{proof}
\section{Construction of the natural transformation $\mf{J}$ from $\ms{x}$ to $\ms{z}$}
\label{12111615}
\begin{theorem}
\label{11301607}
Let $\mc{M}$ be a semi-Riemannian manifold with underlying manifold $M$, 
thus for every $f,h\in\mc{D}(M)$ we have 
\begin{equation}
\label{11301607a}
\mathsterling_{\mr{grad}_{\mc{M}}(f)}(h)=\mathsterling_{\mr{grad}_{\mc{M}}(h)}(f).
\end{equation}
Next let $U$ be such that $(M,U)\in\mr{vf}^{\star}$.
If there exists a frame $\{E_{i}\}_{i=1}^{n=\mr{dim}M}$ 
of orthonormal fields of $\mc{M}$ such that $[U,E_{i}]=\ze$ for every $i\in[1,n]\cap\Z$, 
then for every $f\in\mc{D}(M)$ and $t\in\R$ we have 
\begin{equation}
\label{11301607b}
\exp_{M}^{U}(t)\circ\mathsterling_{[\mr{grad}_{\mc{M}}(f),U]}
=
\mathsterling_{[(\mr{grad}_{\mc{M}}\circ\exp_{M}^{U}(t))(f),U]}\circ\exp_{M}^{U}(t).
\end{equation}
\end{theorem}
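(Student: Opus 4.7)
The plan is to handle the two identities separately: \eqref{11301607a} is a direct consequence of the symmetry of the metric, while \eqref{11301607b} will be reduced, using the hypothesis $[U,E_{i}]=\ze$, to a clean intertwining between $\exp_{M}^{U}(t)$ and the gradient-Lie derivative, and that intertwining will in turn follow from the $\ast$-automorphism property of $\exp_{M}^{U}(t)$ together with its commutation with each $\mathsterling_{E_{i}}$.

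For \eqref{11301607a}, the defining identity $\lr{\mr{grad}_{\mc{M}}(f)}{Y}_{\mc{M}}=\mathsterling_{Y}(f)$ recalled in Notation, specialized to $Y=\mr{grad}_{\mc{M}}(h)$ and combined with the symmetry of the bilinear form $\lr{\cdot}{\cdot}_{\mc{M}}$, immediately yields
\[
\mathsterling_{\mr{grad}_{\mc{M}}(h)}(f)
=
\lr{\mr{grad}_{\mc{M}}(f)}{\mr{grad}_{\mc{M}}(h)}_{\mc{M}}
=
\lr{\mr{grad}_{\mc{M}}(h)}{\mr{grad}_{\mc{M}}(f)}_{\mc{M}}
=
\mathsterling_{\mr{grad}_{\mc{M}}(f)}(h).
\]

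For \eqref{11301607b}, I would first expand $\mr{grad}_{\mc{M}}(f)=\sum_{i=1}^{n}\ep_{i}\,\mathsterling_{E_{i}}(f)\,E_{i}$ in the orthonormal frame, with signatures $\ep_{i}\in\{-1,+1\}$. The Leibniz-type commutator identity $[\alpha X,Y]=\alpha[X,Y]-\mathsterling_{Y}(\alpha)X$ together with $[U,E_{i}]=\ze$, which also provides $[\mathsterling_{U},\mathsterling_{E_{i}}]=\mathsterling_{[U,E_{i}]}=\ze$, gives $[\mathsterling_{E_{i}}(f)E_{i},U]=-\mathsterling_{E_{i}}\mathsterling_{U}(f)\,E_{i}$. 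Summing over $i$ produces the key identity $[\mr{grad}_{\mc{M}}(f),U]=-\mr{grad}_{\mc{M}}(\mathsterling_{U}f)$, whence $\mathsterling_{[\mr{grad}_{\mc{M}}(f),U]}=-\mathsterling_{\mr{grad}_{\mc{M}}(\mathsterling_{U}f)}$. Since by Lemma~\ref{11011940} $\mathsterling_{U}$ and $\exp_{M}^{U}(t)$ commute on $\mc{D}(M)$, applying the same computation with $f$ replaced by $\exp_{M}^{U}(t)f$ reduces \eqref{11301607b} to the intertwining relation, valid for every $g\in\mc{D}(M)$,
\[
\exp_{M}^{U}(t)\circ\mathsterling_{\mr{grad}_{\mc{M}}(g)}
=
\mathsterling_{\mr{grad}_{\mc{M}}(\exp_{M}^{U}(t)g)}\circ\exp_{M}^{U}(t),
\]
specialized to $g=\mathsterling_{U}f$.

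To establish this intertwining I would evaluate both sides on an arbitrary $h\in\mc{D}(M)$ and use the frame expansion once more to write $\mathsterling_{\mr{grad}_{\mc{M}}(g)}(h)=\sum_{i}\ep_{i}\,\mathsterling_{E_{i}}(g)\cdot\mathsterling_{E_{i}}(h)$. Two properties of $\exp_{M}^{U}(t)$ then finish the argument: it is a $\ast$-automorphism of the topological $\ast$-algebra $\mc{D}(M)$ by Rmk.~\ref{11240847}, hence multiplicative on pointwise products; and it commutes with each $\mathsterling_{E_{i}}$ on $\mc{D}(M)$. The latter follows by applying the $\tau_{c}^{\infty}$-continuous operator $\mathsterling_{E_{i}}$ term-by-term to the exponential series $\sum_{k\geq 0}\frac{(t\mathsterling_{U})^{k}}{k!}$, which converges in the sequentially complete $\mc{D}(M)$, and using $[\mathsterling_{U},\mathsterling_{E_{i}}]=\ze$. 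This last commutation is the main technical point, since $\mathsterling_{E_{i}}$ need not belong to the equicontinuous family attached to $\mathsterling_{U}$; once it is in hand, the left-hand side becomes $\sum_{i}\ep_{i}\,\mathsterling_{E_{i}}(\exp_{M}^{U}(t)g)\cdot\mathsterling_{E_{i}}(\exp_{M}^{U}(t)h)=\mathsterling_{\mr{grad}_{\mc{M}}(\exp_{M}^{U}(t)g)}(\exp_{M}^{U}(t)h)$, which matches the right-hand side and closes the reduction.
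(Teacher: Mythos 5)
Your proposal is correct, and for \eqref{11301607b} it uses the same essential ingredients as the paper, only packaged differently. Where the paper expands the operator commutator $[\mathsterling_{\mr{grad}_{\mc{M}}(f)},\mathsterling_{U}]$ in the orthonormal frame (its identity \eqref{12091643}), kills the term $\sum_{i}\ep_{i}\cdot\mathsterling_{E_{i}}(f)\mathsterling_{[E_{i},U]}$ by the hypothesis $[U,E_{i}]=\ze$, and then applies $\exp_{M}^{U}(t)$ directly to both sides using multiplicativity (Rmk.~\ref{11240847}) and the termwise-series commutation \eqref{12091629}, you read the same cancellation back at the vector-field level as the identity $[\mr{grad}_{\mc{M}}(f),U]=-\mr{grad}_{\mc{M}}(\mathsterling_{U}f)$ and then isolate an explicit intertwining lemma $\exp_{M}^{U}(t)\circ\mathsterling_{\mr{grad}_{\mc{M}}(g)}=\mathsterling_{\mr{grad}_{\mc{M}}(\exp_{M}^{U}(t)g)}\circ\exp_{M}^{U}(t)$, applied to $g=\mathsterling_{U}f$ (using Lemma~\ref{11011940} to commute $\mathsterling_{U}$ past $\exp_{M}^{U}(t)$); your proof of that lemma — frame expansion as in \eqref{12091633}, multiplicativity of the $\ast$-automorphism $\exp_{M}^{U}(t)$, and the term-by-term commutation of $\mathsterling_{E_{i}}$ with the exponential series — is exactly the paper's mechanism, so the reduction buys mainly a cleaner statement of what the hypothesis \eqref{12160257} is used for, at no extra cost. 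For \eqref{11301607a} your route is genuinely different and slightly better: you get it directly from $\lr{\mr{grad}_{\mc{M}}(f)}{Y}_{\mc{M}}=\mathsterling_{Y}(f)$ and the symmetry of $\lr{\cdot}{\cdot}_{\mc{M}}$, with no frame at all, whereas the paper derives it from the frame computation \eqref{12091633} and thus formally invokes a global orthonormal frame that the first half of the statement does not assume. One small caveat, which affects the paper equally and is not a gap: the coefficients $\ep_{i}=\lr{E_{i}}{E_{i}}_{\mc{M}}$ are in general only locally constant, but since they are annihilated by $\mathsterling_{U}^{\divideontimes}$ the argument of Rmk.~\ref{11240847} still lets you pull them through $\exp_{M}^{U}(t)$, so treating them as signs $\pm1$ is harmless.
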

\begin{proof}
By the same symbol $[,]$ we shall denote the Lie braket of vector fields on $M$ and the Lie braket induced
by the associative product on $\mf{L}(\mc{D}(M))$, namely $[T,Q]=T\circ Q-Q\circ T$ for 
$T,Q\in\mf{L}(\mc{D}(M))$. Next let $\{E_{i}\}_{i=1}^{n=\mr{dim}M}$ be a frame of orthonormal fields 
of $\mc{M}$, let $\ep_{i}=\lr{E_{i}}{E_{i}}_{\mc{M}}$ for every $i\in[1,n]\cap\Z$. 
Thus for every vector field $W$ we have 
$\mathsterling_{W}=\sum_{i=1}^{n}\ep_{i}\lr{W}{E_{i}}_{\mc{M}}\cdot\mathsterling_{E_{i}}$. 
Next let $f,h\in\mc{D}(M)$ thus
\begin{equation}
\label{12091633}
\begin{aligned}
\mathsterling_{\mr{grad}_{\mc{M}}(f)}(h)
&=
\sum_{i=1}^{n}\ep_{i}\lr{\mr{grad}_{\mc{M}}(f)}{E_{i}}_{\mc{M}}\cdot\mathsterling_{E_{i}}(h)
\\
&=
\sum_{i=1}^{n}\ep_{i}\cdot\mathsterling_{E_{i}}(f)\mathsterling_{E_{i}}(h)
\\
&=
\sum_{i=1}^{n}\ep_{i}\lr{\mr{grad}_{\mc{M}}(h)}{E_{i}}_{\mc{M}}\cdot\mathsterling_{E_{i}}(f)
\\
&=
\mathsterling_{\mr{grad}_{\mc{M}}(h)}(f);
\end{aligned}
\end{equation}
and \eqref{11301607a} follows.
Next let $W\in\mf{X}(M)$, so $\mathsterling_{W}$ is 
$(\tau_{c}^{\infty},\tau_{c}^{\infty})$-continuous thus
\begin{equation*}
\begin{aligned}
\exp_{M}^{U}(t)\circ\mathsterling_{W}
&=
\sum_{k=0}^{\infty}\frac{t^{k}}{k!}
\mathsterling_{U}^{k}
\mathsterling_{W},
\\
\mathsterling_{W}\circ\exp_{M}^{U}(t)
&=
\sum_{k=0}^{\infty}\frac{t^{k}}{k!}
\mathsterling_{W}
\mathsterling_{U}^{k};
\end{aligned}
\end{equation*}
both converging in $\mf{L}_{s}(\mc{D}(M))$.
Since $\mathsterling:\mf{X}(M)\to\mr{Der}(\mc{D}(M))$ is a Lie algebra isomorphism onto the Lie algebra of 
derivations of $\mc{D}(M)$, we have
$[W,U]=\ze
\Leftrightarrow
\mathsterling_{[W,U]}=\ze
\Leftrightarrow
[\mathsterling_{W},\mathsterling_{U}]=\ze
\Leftrightarrow
(\forall n\in\Z_{+})([\mathsterling_{W},\mathsterling_{U}^{n}]=\ze)$
therefore 
\begin{equation}
\label{12091629}
(\forall W\in\mf{X}(M))
([W,U]=\ze\Rightarrow[\mathsterling_{W},\exp_{M}^{U}(t)]=\ze).
\end{equation}
Next by the defining property of derivations, the second equality of \eqref{12091633} and Rmk. \ref{11240847} 
we deduce that 
\begin{equation}
\label{12091643}
[\mathsterling_{\mr{grad}_{\mc{M}}(f)},\mathsterling_{U}]
=
\sum_{i=1}^{n}\ep_{i}\cdot\mathsterling_{E_{i}}(f)\mathsterling_{[E_{i},U]}+
(-1)\sum_{i=1}^{n}\ep_{i}\cdot(\mathsterling_{U}\circ\mathsterling_{E_{i}})(f)\mathsterling_{E_{i}}.
\end{equation}
Let $A$ and $B$ denote the left and right sides of the equality \eqref{11301607b} respectively, thus
\begin{equation*}
\begin{aligned}
A&=\exp_{M}^{U}(t)\circ[\mathsterling_{\mr{grad}_{\mc{M}}(f)},\mathsterling_{U}];
\\
B&=[\mathsterling_{(\mr{grad}_{\mc{M}}\circ\exp_{M}^{U}(t))(f)},\mathsterling_{U}]\circ\exp_{M}^{U}(t).
\end{aligned}
\end{equation*}
Now if $[U,E_{i}]=\ze$ for every $i\in[1,n]\cap\Z$, then by \eqref{12091643} and Rmk. \ref{11240847} we obtain 
\begin{equation*}
\begin{aligned}
A&=
(-1)\sum_{i=1}^{n}\ep_{i}\cdot
(\exp_{M}^{U}(t)\circ\mathsterling_{U}\circ\mathsterling_{E_{i}})(f)(\exp_{M}^{U}(t)\circ\mathsterling_{E_{i}});
\\
B&=
(-1)\sum_{i=1}^{n}\ep_{i}\cdot
(\mathsterling_{U}\circ\mathsterling_{E_{i}}\circ\exp_{M}^{U}(t))(f)
(\mathsterling_{E_{i}}\circ\exp_{M}^{U}(t));
\end{aligned}
\end{equation*}
then st. \eqref{11301607b} follows since \eqref{12091629}.
\end{proof}
\begin{remark}
\label{12101810}
Since \eqref{12091629} we have 
$[U,W]=\ze\Rightarrow(\forall t\in\R)(\Lambda_{M}^{U}(t)(\mathsterling_{W})=\mathsterling_{W})$.
\end{remark}
\begin{lemma}
\label{12031345}
Let $\mc{M}=(M,g)$, $\mc{N}=(N,g^{\prime})$ be semi-Riemannian manifolds and $\phi:N\to M$ be a smooth 
diffeomorphism such that $\phi^{\ast}g=g^{\prime}$. Thus for every $f\in\mc{D}(M)$ we have 
\begin{equation*}
\phi^{\ast}\circ\mathsterling_{\mr{grad}_{\mc{M}}(f)}
=
\mathsterling_{(\mr{grad}_{\mc{N}}\circ\phi^{\ast})(f)}\circ\phi^{\ast}.
\end{equation*}
\end{lemma}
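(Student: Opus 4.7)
The plan is to reduce the identity to the assertion that $\mr{grad}_{\mc{N}}(\phi^{\ast}f)$ and $\mr{grad}_{\mc{M}}(f)$ are $\phi$-related vector fields, after which the Lie-derivative intertwining relation $\mathsterling_{V}\circ\phi^{\ast}=\phi^{\ast}\circ\mathsterling_{U}$ for $\phi$-related $V$ and $U$ (recalled in the Notation) gives exactly the claim applied to an arbitrary test function. Since $\phi$ is a diffeomorphism, for every $Y\in\mf{X}(M)$ there is a unique $W\in\mf{X}(N)$ that is $\phi$-related to $Y$, namely $W_{p}=(T\phi_{p})^{-1}(Y_{\phi(p)})$; so the whole problem is to identify this $W$ in the case $Y=\mr{grad}_{\mc{M}}(f)$.

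First I would invoke the characterization of the gradient: $\mr{grad}_{\mc{N}}(\phi^{\ast}f)$ is the unique element of $\mf{X}(N)$ such that
\begin{equation*}
\lr{\mr{grad}_{\mc{N}}(\phi^{\ast}f)}{W}_{\mc{N}}=\mathsterling_{W}(\phi^{\ast}f),\quad\forall W\in\mf{X}(N).
\end{equation*}
So it suffices to verify that the $\phi$-pullback $\widetilde{Y}$ of $\mr{grad}_{\mc{M}}(f)$ satisfies the same characterizing equation. This is a pointwise computation: at $p\in N$,
\begin{equation*}
g'_{p}(\widetilde{Y}_{p},W_{p})=g_{\phi(p)}(\mr{grad}_{\mc{M}}(f)_{\phi(p)},T\phi_{p}W_{p}),
\end{equation*}
where the second equality uses the isometry hypothesis $\phi^{\ast}g=g'$. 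Setting $Y\coloneqq\phi_{\ast}W$, so that $T\phi_{p}W_{p}=Y_{\phi(p)}$ and $W$ and $Y$ are $\phi$-related, the right-hand side becomes $\mathsterling_{Y}(f)_{\phi(p)}=(\phi^{\ast}\mathsterling_{Y}f)_{p}=\mathsterling_{W}(\phi^{\ast}f)_{p}$ by the Notation identity applied to $(Y,W)$. Hence $\widetilde{Y}$ obeys the defining equation of $\mr{grad}_{\mc{N}}(\phi^{\ast}f)$, and uniqueness forces $\widetilde{Y}=\mr{grad}_{\mc{N}}(\phi^{\ast}f)$; equivalently, $\mr{grad}_{\mc{M}}(f)$ and $\mr{grad}_{\mc{N}}(\phi^{\ast}f)$ are $\phi$-related.

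Applying once more the Notation identity to this $\phi$-related pair, evaluated on an arbitrary $h\in\mc{D}(M)$, yields
\begin{equation*}
\phi^{\ast}\bigl(\mathsterling_{\mr{grad}_{\mc{M}}(f)}(h)\bigr)=\mathsterling_{(\mr{grad}_{\mc{N}}\circ\phi^{\ast})(f)}(\phi^{\ast}h),
\end{equation*}
which is exactly the claimed operator identity on $\mc{D}(M)$. The only mildly delicate point is keeping track of conventions in the pointwise calculation, in particular deploying $\phi^{\ast}g=g'$ on the correct pair of tangent vectors; once that is done the rest is just the defining property of the gradient together with naturality of the Lie derivative under $\phi$-relatedness, both of which are already available in the paper's Notation.
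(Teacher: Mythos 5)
Your proof is correct, but it follows a genuinely different route from the paper's. The paper's proof expands $\mathsterling_{\mr{grad}_{\mc{M}}(f)}$ in a frame $\{E_{i}\}$ of orthonormal fields of $\mc{M}$, pulls the expansion back term by term through the $\phi$-related frame $G_{i}=d(\phi^{-1})\circ E_{i}\circ\phi$, and invokes Lemma 1.6.6 of \cite{27sil} to know that $\{G_{i}\}$ is again orthonormal for $\mc{N}$, so that the transported sum re-assembles into $\mathsterling_{(\mr{grad}_{\mc{N}}\circ\phi^{\ast})(f)}\circ\phi^{\ast}$. You instead prove directly that $\mr{grad}_{\mc{M}}(f)$ and $\mr{grad}_{\mc{N}}(\phi^{\ast}f)$ are $\phi$-related, using only the non-degeneracy characterization of the gradient, the hypothesis $\phi^{\ast}g=g^{\prime}$, and the naturality identity $\mathsterling_{V}\circ\phi^{\ast}=\phi^{\ast}\circ\mathsterling_{U}$ for $\phi$-related fields from the Notation, and then conclude by one more application of that identity; this is sound, since $\phi$ being a diffeomorphism makes the pullback field $\widetilde{Y}=(T\phi)^{-1}\circ\mr{grad}_{\mc{M}}(f)\circ\phi$ a smooth vector field and non-degeneracy of $g^{\prime}$ forces $\widetilde{Y}=\mr{grad}_{\mc{N}}(\phi^{\ast}f)$. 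What your argument buys: it is frame-free, hence does not presuppose a globally defined orthonormal frame on $\mc{M}$ (which the paper's computation tacitly uses and which need not exist on a general semi-Riemannian manifold), it avoids the external reference to \cite{27sil}, and it isolates a reusable geometric fact (isometries intertwine gradients). What the paper's computation buys is uniformity with the orthonormal-frame calculus already needed in Thm. \ref{11301607}, so the same machinery serves both results. One cosmetic slip: in the display for $g^{\prime}_{p}(\widetilde{Y}_{p},W_{p})$ you speak of ``the second equality'' although only one equality is written; the intended chain is $g^{\prime}_{p}(\widetilde{Y}_{p},W_{p})=(\phi^{\ast}g)_{p}(\widetilde{Y}_{p},W_{p})=g_{\phi(p)}(T\phi_{p}\widetilde{Y}_{p},T\phi_{p}W_{p})=g_{\phi(p)}(\mr{grad}_{\mc{M}}(f)_{\phi(p)},T\phi_{p}W_{p})$, which is clearly what you mean and does not affect the argument.
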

\begin{proof}
Let $\{E_{i}\}_{i=1}^{n=\mr{dim}M}$ be a frame of orthonormal fields of $\mc{M}$, and set
$G_{i}\coloneqq d(\phi^{-1})\circ E_{i}\circ\phi$, thus $E_{i}$ and $G_{i}$ are $\phi$-related, hence 
$\phi^{\ast}\circ\mathsterling_{E_{i}}=\mathsterling_{G_{i}}\circ\phi^{\ast}$ and 
$\{G_{i}\}_{i=1}^{n}$ is a frame of orthonormal fields of $\mc{N}$ since \cite[Lemma 1.6.6]{27sil}.
Therefore since again \cite[Lemma 1.6.6]{27sil} we have that 
\begin{equation*}
\begin{aligned}
\phi^{\ast}\circ\mathsterling_{\mr{grad}_{\mc{M}}(f)} 
&=
\sum_{i=1}^{n}
\phi^{\ast}(\lr{E_{i}}{E_{i}}_{\mc{M}})
\phi^{\ast}(\mathsterling_{E_{i}}(f))(\phi^{\ast}\circ\mathsterling_{E_{i}})
\\
&=
\sum_{i=1}^{n}
\lr{G_{i}}{G_{i}}_{\mc{N}}
\mathsterling_{G_{i}}(\phi^{\ast}f)
(\mathsterling_{G_{i}}\circ\phi^{\ast})
\\
&=
\mathsterling_{(\mr{grad}_{\mc{N}}\circ\phi^{\ast})(f)}
\circ
\phi^{\ast}.
\end{aligned}
\end{equation*}
\end{proof}
\begin{definition}
[The map $\mc{Z}$]
\label{12031517}
Let $\mc{M}$ be a semi-Riemannian manifold, $M$ be the manifold underlying $\mc{M}$, $U\in\mf{X}(M)$ 
and $X\subseteq M$. Define
\begin{equation*}
\begin{aligned}
\mc{Z}_{(\mc{M},U)}^{X}:\mc{D}_{1}(M,X)&\to\mf{B}(\mc{M},X)
\\
(f,\lambda)&\mapsto\mathsterling_{[\mr{grad}_{\mc{M}}(f),U]}+\lambda\un;
\end{aligned}
\end{equation*}
where $\un$ is the unit element of the unital algebra $\mf{B}(\mc{M})$.
\end{definition}
\begin{proposition}
\label{12041048}
Def. \ref{12031517} is well-set namely 
$\mc{Z}_{(\mc{M},U)}^{X}(f)\in\mf{B}(\mc{M},X)$ for every $f\in\mc{D}(M,X)$. 
\end{proposition}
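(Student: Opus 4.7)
The plan is to verify, for every $(f,\lambda)\in\mc{D}_{1}(M,X)$, that $\mc{Z}_{(\mc{M},U)}^{X}(f,\lambda)=\mathsterling_{W}+\lambda\un$ belongs to $\mf{B}(\mc{M},X)$, where $W\coloneqq[\mr{grad}_{\mc{M}}(f),U]$. Since $\mf{B}(\mc{M},X)$ is a unital sub $\ast$-algebra of $\mf{B}(\mc{M})$ by definition in Thm. \ref{11281921}, it suffices to treat the two summands separately: the multiple $\lambda\un$ of the unit lies trivially in $\mf{B}(\mc{M},X)$, and the whole task reduces to showing $\mathsterling_{W}\in\mf{B}(\mc{M},X)$.

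First I would note that since $f\in\mc{D}(M,X)\subset\mc{C}^{\infty}(M)$, the field $\mr{grad}_{\mc{M}}(f)$ is smooth, hence so is $W=[\mr{grad}_{\mc{M}}(f),U]\in\mf{X}(M)$. By the paragraph on vector fields in Notation, $\mathsterling_{W}\in\mr{DiffOp}^{1}(M)$, and by Rmk. \ref{11251447} therefore $\mathsterling_{W}\in\ms{B}(\mc{M})=\mf{B}(\mc{M})$. The key geometric observation for the remaining two conditions is that $W$ itself is supported in $X$: indeed $\mr{grad}_{\mc{M}}(f)$ vanishes on the open set where $f$ is locally constant, in particular outside $\mr{supp}(f)\subseteq X$, and the Lie bracket of vector fields is a local operation, so $\mr{supp}(W)\subseteq\mr{supp}(\mr{grad}_{\mc{M}}(f))\subseteq X$.

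From this, the invariance $\mathsterling_{W}\mc{D}(M,X)\subseteq\mc{D}(M,X)$ is immediate: for any $h\in\mc{D}(M,X)$, the function $\mathsterling_{W}h$ vanishes wherever $W$ does, hence outside $\mr{supp}(W)\subseteq X$; alternatively one can invoke the general support property $\mr{supp}(Dh)\subseteq\mr{supp}(h)$ of differential operators recalled in Notation.

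The remaining and most delicate step is to verify $\mathsterling_{W}^{\dagger}\mc{D}(M,X)\subseteq\mc{D}(M,X)$. For this I expect to use the standard fact that the $\mc{H}_{g}$-formal adjoint of a first-order differential operator is again a first-order differential operator; concretely, integration by parts against $\mu_{g}$ yields the representation $\mathsterling_{W}^{\dagger}=-\mathsterling_{W}-m_{\mr{div}_{g}(W)}$, where $m_{\varphi}$ denotes multiplication by $\varphi\in\mc{C}^{\infty}(M)$. Since $\mr{div}_{g}(W)$ vanishes wherever $W$ does, both summands have coefficients supported in $\mr{supp}(W)\subseteq X$, so $\mathsterling_{W}^{\dagger}h$ vanishes outside $X$ for every $h\in\mc{D}(M,X)$. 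The main obstacle I anticipate is precisely this last step: obtaining the explicit semi-Riemannian divergence formula for $\mathsterling_{W}^{\dagger}$ via the density associated with $g$, or at least proving directly from \cite[Thm. 1.2.15]{27wal} together with the locality of the $\mc{H}_{g}$-adjoint on $\mc{D}(M)$ that $\mathsterling_{W}^{\dagger}\up\mc{D}(M)\in\mr{DiffOp}^{1}(M)$, which is what ultimately permits the support argument to be applied to the adjoint.
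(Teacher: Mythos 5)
Your proposal is correct and follows essentially the same route as the paper: the paper likewise reduces everything to the facts that $\mathsterling_{[\mr{grad}_{\mc{M}}(f),U]}+\lambda\un$ lies in $\mr{DiffOp}^{1}(M)\subset\ms{B}(\mc{M})$ (Rmk. \ref{11251447}, with \eqref{12040058} for the unit term) and that elements of $\mr{DiffOp}^{1}(M)$ are local and stable under $(\cdot)^{\dagger}$, so that $\mr{supp}(Dl)\subseteq\mr{supp}(l)\subseteq X$ holds for both $D$ and $D^{\dagger}$. The step you single out as the main obstacle is exactly what the paper disposes of by citing the properties recalled in the Notation (via \cite[Thm. 1.2.15]{27wal}), so your explicit formula $\mathsterling_{W}^{\dagger}\up\mc{D}(M)=-\mathsterling_{W}-m_{\mr{div}_{g}(W)}$ and the detour through $\mr{supp}(W)\subseteq X$ are correct but not needed.
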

\begin{proof}
$\mc{Z}_{(\mc{M},U)}^{X}(f)\in\mf{B}(\mc{M})$ since Rmk. \ref{11251447} and \eqref{12040058}.
Next by Notation we know that the elements in $\mr{DiffOp^{1}}(M)$ are local, and that 
$D\in\mr{DiffOp}^{1}(M)\Rightarrow D^{\dagger}\in\mr{DiffOp}^{1}(M)$,
thus $l\in\mc{D}(M,X)$ implies $\mr{supp}(\mc{Z}_{(\mc{M},U)}^{X}(f)(l))\subseteq\mr{supp}(l)\subseteq X$
and $\mr{supp}(\mc{Z}_{(\mc{M},U)}^{X}(f)^{\dagger}(l))\subseteq X$.
\end{proof}
\begin{corollary}
\label{12040944}
Let $\mc{M}$ be a semi-Riemannian manifold, $M$ be the manifold underlying $\mc{M}$ and $X\subseteq M$, thus
\begin{equation*}
\bigl(f\mapsto\mathsterling_{\mr{grad}_{\mc{M}}(f)}\bigr)\in\mf{L}(\mc{D}(M,X),\mf{B}(\mc{M},X)).
\end{equation*}
\end{corollary}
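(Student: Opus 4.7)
The plan is to handle linearity, well-definedness, and continuity separately, with continuity carrying the real content.

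Linearity is immediate since $f\mapsto\mr{grad}_{\mc{M}}(f)$ and $V\mapsto\mathsterling_{V}$ are both $\C$-linear. For well-definedness, the locality argument of Prp. \ref{12041048} adapts verbatim to this case: by Rmk. \ref{11251447} we have $\mathsterling_{\mr{grad}_{\mc{M}}(f)}\in\mr{DiffOp}^{1}(M)\subset\ms{B}(\mc{M})$, the vector field $\mr{grad}_{\mc{M}}(f)$ has support contained in $\mr{supp}(f)\subseteq X$, and the $\dagger$-adjoint of an element of $\mr{DiffOp}^{1}(M)$ is again a first-order (hence local) differential operator with coefficients in terms of $f$ and its derivatives, so both operators preserve $\mc{D}(M,X)$. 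This places $\mathsterling_{\mr{grad}_{\mc{M}}(f)}$ in $\mf{B}(\mc{M},X)$.

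For continuity the key reduction is Lemma \ref{11250550}: the topology $\tau_{\mc{M}}$ is generated by seminorms each of which extends to a continuous seminorm on $\mf{L}_{b}(\mc{D}(M))$. It therefore suffices to prove that $f\mapsto\mathsterling_{\mr{grad}_{\mc{M}}(f)}$ is continuous as a map $\mc{D}(M,X)\to\mf{L}_{b}(\mc{D}(M))$. Fix a bounded subset $C\subseteq\mc{D}(M)$ and a continuous seminorm $p$ on $\mc{D}(M)$; since $\mc{D}(M)$ is the strict inductive limit of the Frechet spaces $\mc{D}(M,K)$, we have $C\subseteq\mc{D}(M,K_{C})$ for some compact $K_{C}$ with $\sup_{g\in C}\|g\|_{C^{m}(K_{C})}<\infty$ for every $m$, and $p\up\mc{D}(M,K_{C})$ is dominated by some $C^{k}(K_{C})$-seminorm. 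Since $\mathsterling_{\mr{grad}_{\mc{M}}(f)}(g)$ has support in $K_{C}$, a partition of unity subordinate to a chart atlas covering $K_{C}$, combined with the local coordinate expression $\sum_{i,j}g^{ij}(\partial_{j}f)(\partial_{i}g)$, yields
\begin{equation*}
p\bigl(\mathsterling_{\mr{grad}_{\mc{M}}(f)}(g)\bigr)\leq A\cdot\|f\|_{C^{k+1}(K_{C})}\cdot\|g\|_{C^{k+1}(K_{C})}
\end{equation*}
with a constant $A$ depending only on the metric and the chart data. Taking the supremum over $g\in C$ gives a bound by a continuous seminorm of $f$ on $\mc{D}(M,X)$, which is the required $\mf{L}_{b}$-continuity.

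The principal technical obstacle is the chart-based bookkeeping of the $C^{k}$-estimate: one must piece together the coordinate expressions via a finite partition of unity and check that the metric-dependent constant $A$ is genuinely independent of $f$ and $g$. Once this is in hand, the reduction through Lemma \ref{11250550} turns the remainder into a routine seminorm comparison.
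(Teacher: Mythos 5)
Your proof is correct, and it shares the paper's skeleton up to a point: well-definedness via the locality argument of Prp.~\ref{12041048} and Rmk.~\ref{11251447}, and the reduction of $\tau_{\mc{M}}$-continuity to $\mf{L}_{b}(\mc{D}(M))$-continuity through Lemma~\ref{11250550}. Where you diverge is the final, substantive step. The paper never makes a coordinate estimate: it observes that by the symmetry \eqref{11301607a}, $\mathsterling_{\mr{grad}_{\mc{M}}(f)}(h)=\mathsterling_{\mr{grad}_{\mc{M}}(h)}(f)$, so for fixed $h$ the map $f\mapsto\mathsterling_{\mr{grad}_{\mc{M}}(f)}(h)$ is just the evaluation of the fixed continuous operator $\mathsterling_{\mr{grad}_{\mc{M}}(h)}$, giving continuity into $\mf{L}_{s}(\mc{D}(M))$ ``at once''; it then upgrades to $\mf{L}_{b}$-continuity via the Banach--Steinhaus theorem, exploiting that $\mc{D}(M)$ is barrelled. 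You instead prove $\mf{L}_{b}$-continuity directly: bounded sets of the LF-space $\mc{D}(M)$ sit and are bounded in some $\mc{D}(M,K_{C})$, continuous seminorms restrict to $C^{k}(K_{C})$-dominated ones, and the local expression $\sum_{i,j}g^{ij}(\partial_{j}f)(\partial_{i}g)$ with a finite atlas and partition of unity gives the bilinear $C^{k+1}$-estimate. Your route is more elementary and self-contained -- it needs neither \eqref{11301607a} nor Banach--Steinhaus -- and it produces an explicit quantitative bound; its cost is the chart bookkeeping you flag, and the implicit use of the structure theory of bounded sets in strict inductive limits. The paper's route is shorter, coordinate-free, and reuses machinery it has already built (notably Thm.~\ref{11301607} and barrelledness of $\mc{D}(M)$), which is why the statement appears there as a corollary.
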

\begin{proof}
Let $\xi$ be the map in the statement, $\tilde{\xi}:\mc{D}(M)\to\mf{B}(\mc{M})$ be the map 
$f\mapsto\mathsterling_{\mr{grad}_{\mc{M}}(f)}$, and let 
$\underline{\xi}\coloneqq\imath_{\mf{B}(\mc{M})}^{\mf{L}(\mc{D}(M))}\circ\tilde{\xi}$.
The above maps are well-set namely 
$\xi(\mc{D}(M,X))\subseteq\mf{B}(\mc{M},X)$ since the proof of Prp. \ref{12041048} 
while $\tilde{\xi}(\mc{D}(M))\subseteq\mf{B}(\mc{M})$ since Rmk. \ref{11251447}.
We claim to show that
\begin{equation}
\label{10121245}
\tilde{\xi}\in\mf{L}(\mc{D}(M),\mf{B}(\mc{M}));
\end{equation}
which would prove our statement since 
$\imath_{\mf{B}(\mc{M},X)}^{\mf{B}(\mc{M})}\circ\xi=\tilde{\xi}\circ\imath_{\mc{D}(M,X)}^{\mc{D}(M)}$.
Now since Lemma \ref{11250550} we have that \eqref{10121245} would follow
if we prove that $\underline{\xi}\in\mf{L}(\mc{D}(M),\mf{L}_{b}(\mc{D}(M)))$.
But $\mc{D}(M)$ is barrelled therefore by applying the Banach-Steinhaus Thm. the above statement would follow 
if $\underline{\xi}\in\mf{L}(\mc{D}(M),\mf{L}_{s}(\mc{D}(M)))$ which at once follows since \eqref{11301607a}.
\end{proof}
For any semi-Riemannian manifold $\mc{M}$ let $\mr{Onf}(\mc{M})$ denote the set of frames of orthonormal fields 
of $\mc{M}$.
\begin{definition}
\label{12041107}
Let $\mf{Vf}$ be the unique full subcategory of $\mf{vf}$ such that 
\begin{equation*}
\mr{Obj}(\mf{Vf})=\bigl\{(\mc{M},U)\in\mr{Obj}(\mf{vf})\,\big\vert\,
(\exists\{E_{i}\}_{i=1}^{n=\mr{dim}M}\in\mr{Onf}(\mc{M}))
(\forall i\in[1,n]\cap\Z)([U,E_{i}]=\ze)\bigr\}.
\end{equation*}
\end{definition}
\begin{definition}
\label{12041131}
Define $\ms{x}\coloneqq\ps{\Uppsi}_{0}\circ\mf{x}\circ\mr{I}_{\mf{Vf}}^{\mf{vf}}$ and 
$\ms{z}\coloneqq\ps{\Uppsi}_{0}\circ\mf{z}\circ\mr{I}_{\mf{Vf}}^{\mf{vf}}$.
\end{definition}
\begin{corollary}
[The $0-$species $\ms{x}$ and $\ms{z}$]
$\ms{x}\in\mf{Sp}_{0}(\mf{Vf})$ and $\ms{z}\in\mf{Sp}_{0}(\mf{Vf})$. 
\end{corollary}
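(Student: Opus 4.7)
The plan is to reduce the statement directly to the final clauses of Thm.~\ref{11280957} and Thm.~\ref{11290640}, which already place $\ps{\Uppsi}_{0}\circ\mf{x}$ and $\ps{\Uppsi}_{0}\circ\mf{z}$ in $\mf{Sp}_{0}(\mf{vf})$. The only additional ingredient is that pre-composition with the inclusion $\mr{I}_{\mf{Vf}}^{\mf{vf}}$ transports the $0$-species structure from the fiber over $\mf{vf}$ to the fiber over $\mf{Vf}$.

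First I would invoke Def.~\ref{12041107} to see that $\mf{Vf}$ is a full subcategory of $\mf{vf}$, so $\mr{I}_{\mf{Vf}}^{\mf{vf}}$ is a bona fide $\mr{top}$-quasi enriched functor and in particular a $1$-cell of $2-\mf{dp}$ from $\mf{Vf}$ to $\mf{vf}$. Using Thm.~\ref{11280957} and Thm.~\ref{11290640}, I would then form the composites $\mf{x}\circ\mr{I}_{\mf{Vf}}^{\mf{vf}}$ and $\mf{z}\circ\mr{I}_{\mf{Vf}}^{\mf{vf}}$, both lying in $\mr{Fct}(\mf{Vf},\mf{dp})$. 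Post-composing with $\ps{\Uppsi}_{0}\in\mr{Fct}(\mf{dp},\mf{Chdv}_{0})$ given by Prp.~\ref{12131143}, I obtain $\ms{x}=\ps{\Uppsi}_{0}\circ\mf{x}\circ\mr{I}_{\mf{Vf}}^{\mf{vf}}$ and $\ms{z}=\ps{\Uppsi}_{0}\circ\mf{z}\circ\mr{I}_{\mf{Vf}}^{\mf{vf}}$ as elements of $\mr{Fct}(\mf{Vf},\mf{Chdv}_{0})$ carrying the explicit factorization through $\mf{dp}$ required by the notion of a $0$-species. By Def.~\ref{12041523} this is exactly the condition for membership in $\mf{Sp}_{0}(\mf{Vf})=2-\mf{dp}(\mf{Vf},\mf{Chdv}_{0})$.

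No genuine difficulty is anticipated: the corollary is a bookkeeping consequence asserting the stability of the $0$-species property under pre-composition with inclusions of full subcategories, and follows from the functoriality of composition in the $2$-category $2-\mf{dp}$. In essence it repackages the already-proven final statements of Thm.~\ref{11280957} and Thm.~\ref{11290640} after restricting the base category from $\mf{vf}$ to $\mf{Vf}$ via $\mr{I}_{\mf{Vf}}^{\mf{vf}}$.
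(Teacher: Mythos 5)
Your proposal is correct and follows essentially the same route as the paper, whose proof simply cites Thm.~\ref{11280957} and Thm.~\ref{11290640}; you merely make explicit the implicit bookkeeping (fullness of $\mf{Vf}$ in $\mf{vf}$ from Def.~\ref{12041107}, pre-composition with $\mr{I}_{\mf{Vf}}^{\mf{vf}}$, and the membership criterion of Def.~\ref{12041523}).
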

\begin{proof}
Since Thm. \ref{11280957} and Thm. \ref{11290640}.
\end{proof}
\begin{theorem}
[Main. A natural transformation from $\ms{x}$ to $\ms{z}$]
\label{12031824}
There exists a natural transformation
\begin{equation*}
\mf{J}\in\mr{Mor}_{\mr{Fct}(\mf{Vf},\mf{Chdv}_{0})}(\ms{x},\ms{z})
\end{equation*}
uniquely determined by the following properties:
\begin{equation*}
\begin{aligned}
\mf{J}:\mr{Obj}(\mf{Vf})\ni(\mc{M},U)
&\mapsto
\mf{J}(\mc{M},U)
=
(\un_{\lr{\mc{M}}{U}},\mc{J}_{(\mc{M},U)}^{\dagger},\mc{J}_{(\mc{M},U)});
\\
\mc{J}_{(\mc{M},U)}:\mr{Obj}(\lr{\mc{M}}{U})
&\ni
X\mapsto\mc{Z}_{(\mc{M},U)}^{X};
\\
\mc{J}_{(\mc{M},U)}^{\dagger}:\mr{Obj}(\lr{\mc{M}}{U})
&\ni
X
\mapsto(\mc{Z}_{(\mc{M},U)}^{X})^{\dagger}.
\end{aligned}
\end{equation*}
\end{theorem}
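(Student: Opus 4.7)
The plan is to decompose the proof into two stages: (A) for each $(\mc{M},U)\in\mf{Vf}$, verify that the triple $\mf{J}(\mc{M},U)=(\un_{\lr{\mc{M}}{U}},\mc{J}^{\dagger}_{(\mc{M},U)},\mc{J}_{(\mc{M},U)})$ is a well-defined morphism in $\mf{Chdv}_{0}$ from $\ms{x}(\mc{M},U)$ to $\ms{z}(\mc{M},U)$, and (B) verify that this assignment is natural in $(\mc{M},U)$. Unwinding Prp.~\ref{12131143} in light of the definition of $\ms{x}$ and $\ms{z}$ via $\ps{\Uppsi}_{0}$, stage (A) reduces to exhibiting
\[
\mc{J}_{(\mc{M},U)}\in\mr{Mor}_{\mr{Fct}(\lr{\mc{M}}{U},\mr{tls})}\bigl(\mf{q}_{0}\circ\mr{F}_{\lr{\mc{M}}{U}},\mf{q}_{0}\circ\mc{F}_{\lr{\mc{M}}{U}}\bigr),
\]
together with the observation that $\mc{J}^{\dagger}_{(\mc{M},U)}$ is then automatically a natural transformation in the opposite direction by functoriality of the dualization in Def.~\ref{12051824}.

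For stage (A), fix $X\in\lr{\mc{M}}{U}$. The inclusion $\mc{Z}^{X}_{(\mc{M},U)}(\mc{D}_{1}(M,X))\subseteq\mf{B}(\mc{M},X)$ is Prp.~\ref{12041048}, while linearity is immediate. Continuity splits in three steps: Cor.~\ref{12040944} supplies continuity of $f\mapsto\mathsterling_{\mr{grad}_{\mc{M}}(f)}$ into $\mf{B}(\mc{M},X)$; composing with the commutator $T\mapsto[T,\mathsterling_{U}]$, which is continuous on $\mf{B}(\mc{M})$ since the latter is a topological $\ast$-algebra by Thm.~\ref{14111827}\eqref{14111827st1} and $\mathsterling_{U}\in\mf{B}(\mc{M})$ by Rmk.~\ref{11251447}, yields continuity of $f\mapsto\mathsterling_{[\mr{grad}_{\mc{M}}(f),U]}$; finally \eqref{12040058} promotes it to the unitization. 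Naturality of $\mc{J}_{(\mc{M},U)}$ in $\lr{\mc{M}}{U}$ amounts, for $t\in\mr{mor}_{\lr{\mc{M}}{U}}(X,Y)$, to
\[
\mathsterling_{[\mr{grad}_{\mc{M}}(\exp_{M}^{U}(t)f),U]}=\Gamma_{\mc{M}}^{U}(t)\bigl(\mathsterling_{[\mr{grad}_{\mc{M}}(f),U]}\bigr)
\]
(the $\lambda\un$-summand being $\Gamma_{\mc{M}}^{U}(t)$-invariant), which is exactly Thm.~\ref{11301607}\eqref{11301607b}; here the restriction to $\mf{Vf}$ is precisely what enables the commuting orthonormal frame hypothesis and thereby kills the unwanted $\sum_{i}\ep_{i}\cdot\mathsterling_{E_{i}}(f)\mathsterling_{[E_{i},U]}$ term in \eqref{12091643}.

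For stage (B), let $\phi\in\mr{Mor}_{\mf{Vf}}((\mc{M},U),(\mc{N},V))$. Since both $\ms{x}(\phi)$ and $\ms{z}(\phi)$ have $f^{\phi}$ as first component and $\mf{J}(\mc{M},U)$, $\mf{J}(\mc{N},V)$ both have identity first components, the naturality square in $\mf{Chdv}_{0}$ under \eqref{123121130} collapses to the requirement that for every $Y\subseteq N$
\[
\mf{T}^{\phi}(Y)\circ\mc{Z}^{\phi(Y)}_{(\mc{M},U)}=\mc{Z}^{Y}_{(\mc{N},V)}\circ\ms{T}_{1}^{\phi}(Y),
\]
together with the dual equation on the $\dagger$-side. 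Substituting \eqref{12131428} and the definition of $\mc{Z}$, this further reduces to the identity $\phi^{\ast}\circ\mathsterling_{[\mr{grad}_{\mc{M}}(f),U]}\circ(\phi^{-1})^{\ast}=\mathsterling_{[\mr{grad}_{\mc{N}}(\phi^{\ast}f),V]}$, which I will derive by expanding the commutator and applying Lemma~\ref{12031345} together with $\phi^{\ast}\circ\mathsterling_{U}=\mathsterling_{V}\circ\phi^{\ast}$ (from $\phi$-relatedness); the $\dagger$-side is then obtained by taking adjoints.

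The main obstacle I anticipate is purely clerical: unwinding the $\mf{Chdv}_{0}$-composition \eqref{123121130} involving the horizontal product $\ast$ against the functor $f^{\phi}$, and carrying through the parallel computation on the $\dagger$-component in the opposite direction without confusing source and target. Once this bookkeeping is in place, the entire substantive content of the theorem is concentrated in the three inputs Cor.~\ref{12040944}, Thm.~\ref{11301607} and Lemma~\ref{12031345}, so no further analytical work should be required.
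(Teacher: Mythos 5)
Your proposal is correct and follows essentially the same route as the paper's own proof: objectwise well-definedness via Prp.~\ref{12041048}, Cor.~\ref{12040944}, the topological-algebra property of $\mf{B}(\mc{M},X)$ and \eqref{12040058}, naturality in $\lr{\mc{M}}{U}$ via Thm.~\ref{11301607}\eqref{11301607b} (with the $\mf{Vf}$ restriction playing exactly the role you describe), the dagger component handled through Def.~\ref{12051824}, and naturality in $\phi$ by unwinding \eqref{123121130} to the componentwise identity settled by Lemma~\ref{12031345}, $\phi$-relatedness of $U$ and $V$, and the fact that $\mathsterling$ is a Lie algebra morphism. No substantive deviation or gap.
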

\begin{proof}
Let us take the properties of $\mf{J}$ given in the statement as its definition, then show that 
the definition is well-set and that $\mf{J}$ is a natural transformation from $\ms{x}$ to $\ms{z}$.
Let $(\mc{M},U)\in\mr{Obj}(\mf{Vf})$ and $X,Y\in\lr{\mc{M}}{U}$.
By $\mathsterling_{[\mr{grad}_{\mc{M}}(f),U]}=[\mathsterling_{\mr{grad_{\mc{M}}}(f)},\mathsterling_{U}]$
and since $\mf{B}(\mc{M},X)$ is a topological algebra we obtain by Cor. \ref{12040944} that 
\begin{equation}
\label{12041155}
\bigl(f\mapsto\mathsterling_{[\mr{grad}_{\mc{M}}(f),U]}\bigr)\in\mf{L}(\mc{D}(M,X),\mf{B}(\mc{M},X)).
\end{equation}
Since \eqref{12041155} and \eqref{12040058} we have
\begin{equation}
\label{12040740}
\mc{J}_{(\mc{M},U)}(X)\in\mr{Mor}_{\mr{tls}}(\mf{q}_{0}(\mc{D}_{1}(M,X)),\mf{q}_{0}(\mf{B}(\mc{M},X))).
\end{equation}
Next \eqref{11301607b} is equivalent to say that for all $f\in\mc{D}(M)$ we have
\begin{equation*}
\Gamma_{\mc{M}}^{U}(t)
(\mathsterling_{[\mr{grad}_{\mc{M}}(f),U]})
=
\mathsterling_{[(\mr{grad}_{\mc{M}}\circ\exp_{M}^{U}(t))(f),U]};
\end{equation*}
therefore by considering \eqref{12040740} we have that 
the following is a commutative diagram in the category $\mr{tls}$
\begin{equation*}
\xymatrix{
\mf{q}_{0}(\mc{D}_{1}(M,Y))
\ar[rr]^{\mc{J}_{(\mc{M},U)}(Y)}
&&
\mf{q}_{0}(\mf{B}(\mc{M},Y))
\\
&&
\\
\mf{q}_{0}(\mc{D}_{1}(M,X))
\ar[rr]_{\mc{J}_{(\mc{M},U)}(X)}
\ar[uu]^{\mf{q}_{0}(\exp_{M}^{U}(t)\oplus\mr{Id}_{\C})}
&&
\mf{q}_{0}(\mf{B}(\mc{M},X))
\ar[uu]_{\mf{q}_{0}(\Gamma_{\mc{M}}^{U}(t))}}
\end{equation*}
namely
\begin{equation}
\label{12040741}
\mc{J}_{(\mc{M},U)}\in\mr{Mor}_{\mr{Fct}(\lr{\mc{M}}{U},\mr{tls})}
(\mf{q}_{0}\circ\mr{F}_{\lr{\mc{M}}{U}},\mf{q}_{0}\circ\mc{F}_{\lr{\mc{M}}{U}}).
\end{equation}
Thus by Def. \ref{12051824} and the corresponding of \cite[Cor. 1.4.16]{27sil}
\begin{equation}
\label{12040742}  
\mc{J}_{(\mc{M},U)}^{\dagger}\in
\mr{Mor}_{\mr{Fct}(\lr{\mc{M}}{U}^{\mr{op}},\mr{tls})}
(\mf{r}\circ\mc{F}_{\lr{\mc{M}}{U}}^{\dagger},\mf{r}\circ\mr{F}_{\lr{\mc{M}}{U}}^{\dagger}).
\end{equation}
\eqref{12040741} and \eqref{12040742} imply 
\begin{equation}
\label{12040743}
\mf{J}(\mc{M},U)\in\mr{Mor}_{\mf{Chdv}_{0}}(\ms{x}(\mc{M},U),\ms{z}(\mc{M},U)).
\end{equation}
Therefore the statement will follow if we prove that for every $(\mc{N},V)\in\mr{Obj}(\mf{Vf})$ and 
every $\phi\in\mr{Mor}_{\mf{Vf}}((\mc{M},U),(\mc{N},V))$ the following is a commutative diagram in 
the category $\mf{Chdv}_{0}$ 
\begin{equation}
\label{12041740}
\xymatrix{
\ms{x}(\mc{N},V)
\ar[rr]^{\mf{J}(\mc{N},V)}
&&
\ms{z}(\mc{N},V)
\\
&&
\\
\ms{x}(\mc{M},U)
\ar[rr]^{\mf{J}(\mc{M},U)}
\ar[uu]^{\ps{\Uppsi}_{0}(f^{\phi},\ms{T}_{1}^{\phi})}
&&
\ms{z}(\mc{M},U)
\ar[uu]_{\ps{\Uppsi}_{0}(f^{\phi},\mf{T}^{\phi})}}
\end{equation}
namely 
\begin{equation*}
\bigl(f^{\phi},(\un_{\mf{q}_{0}}\ast\mf{T}^{\phi})^{\dagger},\un_{\mf{q}_{0}}\ast\mf{T}^{\phi}\bigr)
\circ
\bigl(\un_{\lr{\mc{M}}{U}},\mc{J}_{(\mc{M},U)}^{\dagger},\mc{J}_{(\mc{M},U)}\bigr)
=
\bigl(\un_{\lr{\mc{N}}{V}},\mc{J}_{(\mc{N},V)}^{\dagger},\mc{J}_{(\mc{N},V)}\bigr)
\circ
\bigl(f^{\phi},(\un_{\mf{q}_{0}}\ast\ms{T}_{1}^{\phi})^{\dagger},\un_{\mf{q}_{0}}\ast\ms{T}_{1}^{\phi}\bigr);
\end{equation*}
that according to \eqref{123121130} is equivalent to 
\begin{multline*}
\bigl(\un_{\lr{\mc{M}}{U}}\circ f^{\phi},
(\mc{J}_{(\mc{M},U)}^{\dagger}\ast\un_{f^{\phi}})\circ(\un_{\mf{q}_{0}}\ast\mf{T}^{\phi})^{\dagger},
(\un_{\mf{q}_{0}}\ast\mf{T}^{\phi})\circ(\mc{J}_{(\mc{M},U)}\ast\un_{f^{\phi}})
\bigr)
=\\
\bigl(f^{\phi}\circ\un_{\lr{\mc{N}}{V}},
((\un_{\mf{q}_{0}}\ast\ms{T}_{1}^{\phi})^{\dagger}
\ast\un_{\un_{\lr{\mc{N}}{V}}})\circ\mc{J}_{(\mc{N},V)}^{\dagger},
\mc{J}_{(\mc{N},V)}\circ((\un_{\mf{q}_{0}}\ast\ms{T}_{1}^{\phi})\ast\un_{\un_{\lr{\mc{N}}{V}}})
\bigr).
\end{multline*}
which reduces to the following equality of morphisms of the category $\mr{Fct}(\lr{\mc{N}}{V},\mr{tls})$
\begin{equation*}
(\un_{\mf{q}_{0}}\ast\mf{T}^{\phi})\circ(\mc{J}_{(\mc{M},U)}\ast\un_{f^{\phi}})
=
\mc{J}_{(\mc{N},V)}\circ((\un_{\mf{q}_{0}}\ast\ms{T}_{1}^{\phi})\ast\un_{\un_{\lr{\mc{N}}{V}}}).
\end{equation*}
Which is equivalent to say that for every $Z\in\lr{\mc{N}}{V}$ we have the following equality of morphisms 
of the category $\mr{tls}$
\begin{equation*}
\mf{T}^{\phi}(Z)\circ\mc{J}_{(\mc{M},U)}(\phi(Z))
=
\mc{J}_{(\mc{N},V)}(Z)\circ\ms{T}_{1}^{\phi}(Z);
\end{equation*}
namely
\begin{equation*}
\mc{T}(\phi)\circ\mc{Z}_{(\mc{M},U)}^{\phi(Z)}
=
\mc{Z}_{(\mc{N},V)}^{Z}\circ(\phi^{\ast}\oplus\mr{Id}_{\C});
\end{equation*}
equivalent to say that for every $(f,\lambda)\in\mc{D}_{1}(M,\phi(Z))$ we have 
\begin{equation*}
\phi^{\ast}
(\mathsterling_{[\mr{grad}_{\mc{M}}(f),U]}+\lambda\un)
(\phi^{-1})^{\ast}
=
\mathsterling_{[(\mr{grad}_{\mc{N}}\circ\phi^{\ast})(f),V]}+\lambda\un;
\end{equation*}
namely
\begin{equation*}
\phi^{\ast}\mathsterling_{[\mr{grad}_{\mc{M}}(f),U]}(\phi^{-1})^{\ast}
=
\mathsterling_{[(\mr{grad}_{\mc{N}}\circ\phi^{\ast})(f),V]};
\end{equation*}
but $\mathsterling$ is a Lie algebra morphism, so the above is equivalent to 
\begin{equation*}
[\phi^{\ast}\mathsterling_{\mr{grad}_{\mc{M}}(f)}(\phi^{-1})^{\ast},
\phi^{\ast}\mathsterling_{U}(\phi^{-1})^{\ast}]
=
[\mathsterling_{(\mr{grad}_{\mc{N}}\circ\phi^{\ast})(f)},\mathsterling_{V}];
\end{equation*}
which follows since Lemma \ref{12031345} and the fact that $U$ and $V$ are $\phi$-related.
Thus the diagram \eqref{12041740} is commutative and the statement follows.
\end{proof}
\section{Additional equicontinuity requests}
\label{12120940}
\begin{definition}
\label{12011752}
Let $M$ be a manifold and $U\in\mf{X}(M)$ such that 
$\{(\mathsterling_{U}^{\divideontimes})^{k}\,\vert\,k\in\Z_{+}\}$ is 
$(\tau^{\infty},\tau^{\infty})$-equicontinuous, define 
\begin{equation*}
\mr{Exp}_{M}^{U}\coloneqq\exp_{\mc{C}^{\infty}(M)}^{\mathsterling_{U}^{\divideontimes}}.
\end{equation*}
\end{definition}
Under the same reasoning used in Rmk. \ref{11240847} we have that
$\mr{Exp}_{M}^{U}$ is a group of unit preserving $\ast$-automorphisms of $\mc{C}^{\infty}(M)$. 
\begin{lemma}
\label{11301957}
Let $M$ be a manifold and $U\in\mf{X}(M)$ such that 
$\{(\mathsterling_{U}^{\divideontimes})^{k}\,\vert\,k\in\Z_{+}\}$ is 
$(\tau^{\infty},\tau^{\infty})$-equicontinuous
and 
$\{\mathsterling_{U}^{k}\,\vert\,k\in\Z_{+}\}$ is $(\tau^{\infty}\up\mc{D}(M),\tau_{c}^{\infty})$-equicontinuous.
Thus
\begin{equation*}
\mr{Exp}_{M}^{U}(t)\circ\imath_{\mc{D}(M)}^{\mc{C}^{\infty}(M)}
=
\imath_{\mc{D}(M)}^{\mc{C}^{\infty}(M)}\circ\exp_{M}^{U}(t);
\end{equation*}
in particular $\exp_{M}^{U}(t)\in\mf{L}(\mc{D}(M)[\tau^{\infty}])$. 
\end{lemma}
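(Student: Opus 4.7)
The plan is to expand both $\mr{Exp}_{M}^{U}(t)$ and $\exp_{M}^{U}(t)$ as convergent exponential series of iterated Lie derivatives and identify their partial sums through the continuous inclusion $\imath:=\imath_{\mc{D}(M)}^{\mc{C}^{\infty}(M)}$. Before starting, I note that the second equicontinuity hypothesis implies $\{\mathsterling_{U}^{k}\}_{k}$ is $(\tau_{c}^{\infty},\tau_{c}^{\infty})$-equicontinuous, since the topology $\tau^{\infty}\up\mc{D}(M)$ is weaker than $\tau_{c}^{\infty}$ on $\mc{D}(M)$; hence $(M,U)\in\mr{vf}^{\star}$ and $\exp_{M}^{U}$ is well defined as a $C_{0}$-group on $\mc{D}(M)[\tau_{c}^{\infty}]$. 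Analogously, the first hypothesis guarantees $\mr{Exp}_{M}^{U}(t)\in\mf{L}(\mc{C}^{\infty}(M))$.

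Fix $t\in\R$ and $f\in\mc{D}(M)$. Since by definition $\mathsterling_{U}$ is the restriction of $\mathsterling_{U}^{\divideontimes}$ to $\mc{D}(M)$ and $\mathsterling_{U}$ preserves $\mc{D}(M)$ (as a first-order differential operator), a straightforward induction on $k$ gives $(\mathsterling_{U}^{\divideontimes})^{k}\circ\imath=\imath\circ\mathsterling_{U}^{k}$ as maps $\mc{D}(M)\to\mc{C}^{\infty}(M)$ for every $k\in\Z_{+}$. Therefore the partial sums
\begin{equation*}
P_{N}:=\sum_{k=0}^{N}\frac{t^{k}}{k!}\mathsterling_{U}^{k}(f)\in\mc{D}(M),\qquad
Q_{N}:=\sum_{k=0}^{N}\frac{t^{k}}{k!}(\mathsterling_{U}^{\divideontimes})^{k}(\imath(f))\in\mc{C}^{\infty}(M)
\end{equation*}
satisfy $\imath(P_{N})=Q_{N}$ for every $N$. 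By definition of $\exp_{M}^{U}$, $P_{N}\to\exp_{M}^{U}(t)(f)$ in $\tau_{c}^{\infty}$; since $\imath$ is $(\tau_{c}^{\infty},\tau^{\infty})$-continuous, $Q_{N}=\imath(P_{N})\to\imath(\exp_{M}^{U}(t)(f))$ in $\tau^{\infty}$. On the other hand $Q_{N}\to\mr{Exp}_{M}^{U}(t)(\imath(f))$ in $\tau^{\infty}$ by the very definition of $\mr{Exp}_{M}^{U}$, so the Hausdorff property of $\tau^{\infty}$ yields the displayed equality.

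For the ``in particular'' clause, the intertwining relation just proved reads $\imath\circ\exp_{M}^{U}(t)=\mr{Exp}_{M}^{U}(t)\circ\imath$ as maps $\mc{D}(M)\to\mc{C}^{\infty}(M)$. Its right-hand side is continuous from $\mc{D}(M)[\tau^{\infty}]$ into $\mc{C}^{\infty}(M)$, being the composition of the trivially continuous $\imath:\mc{D}(M)[\tau^{\infty}]\to\mc{C}^{\infty}(M)$ with $\mr{Exp}_{M}^{U}(t)\in\mf{L}(\mc{C}^{\infty}(M))$. Since $\exp_{M}^{U}(t)$ takes values in $\mc{D}(M)$ and $\mc{D}(M)[\tau^{\infty}]$ carries by definition the subspace topology inherited from $\mc{C}^{\infty}(M)$, this corestricts to continuity of $\exp_{M}^{U}(t)$ from $\mc{D}(M)[\tau^{\infty}]$ into itself. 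No deep obstacle arises here; the only real care required is keeping $\tau^{\infty}$ and $\tau_{c}^{\infty}$ cleanly separated and invoking continuity of $\imath$ at exactly the right place to transfer the convergence of $P_{N}$ to that of $Q_{N}$. The second equicontinuity hypothesis enters only to make $\exp_{M}^{U}$ exist in the first place and plays no further role thereafter.
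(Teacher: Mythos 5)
Your proposal is correct and follows essentially the same route as the paper: deduce $(\tau_{c}^{\infty},\tau_{c}^{\infty})$-equicontinuity of $\{\mathsterling_{U}^{k}\}$ from the hypothesis and $\mc{D}(M)\hookrightarrow\mc{C}^{\infty}(M)$, then identify the two exponential series termwise through the continuous inclusion and compare the $\tau_{c}^{\infty}$- and $\tau^{\infty}$-limits. Your explicit partial-sum/Hausdorff bookkeeping and the corestriction argument for $\exp_{M}^{U}(t)\in\mf{L}(\mc{D}(M)[\tau^{\infty}])$ merely spell out details the paper leaves implicit.
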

\begin{proof}
By the hypothesis and since $\mc{D}(M)\hookrightarrow\mc{C}^{\infty}(M)$ we have that
$\{\mathsterling_{U}^{k}\,\vert\,k\in\Z_{+}\}$ is $(\tau_{c}^{\infty},\tau_{c}^{\infty})$-equicontinuous.
Thus $\exp_{M}^{U}(t)$ is well-set and for every $f\in\mc{D}(M)$ we have 
\begin{equation*}
\begin{aligned}
\exp_{M}^{U}(t)f
&=
\sum_{k=0}^{\infty}\frac{t^{k}}{k!}\mathsterling_{U}^{k}(f)
\text{ w.r.t. the $\tau_{c}^{\infty}$-topology} 
\\
&=
\sum_{k=0}^{\infty}\frac{t^{k}}{k!}(\mathsterling_{U}^{\divideontimes})^{k}(f)
\text{ w.r.t. the $\tau^{\infty}$-topology}
\\
&=\mr{Exp}_{M}^{U}(t)f.
\end{aligned}
\end{equation*}
\end{proof}
\begin{proposition}
\label{12010855}
Under the hypothesis of Lemma \ref{11301957}, for every $t\in\R$, $F\in\mc{C}^{\infty}(M)$ and $f\in\mc{D}(M)$
we have $\exp_{M}^{U}(t)(F\cdot f)=\mr{Exp}_{M}^{U}(t)(F)\cdot\exp_{M}^{U}(t)(f)$.
\end{proposition}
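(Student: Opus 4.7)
The plan is to leverage Lemma \ref{11301957} together with the already-noted fact (stated just before that lemma) that $\mr{Exp}_{M}^{U}(t)$ is a $\ast$-automorphism of the algebra $\mc{C}^{\infty}(M)$, in particular multiplicative on pointwise products.

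First I would observe that since $F\in\mc{C}^{\infty}(M)$ and $f\in\mc{D}(M)$, the product $F\cdot f$ lies in $\mc{D}(M)$ (its support is contained in $\mr{supp}(f)$, which is compact). Hence Lemma \ref{11301957} applies to $F\cdot f$ and yields
\begin{equation*}
\exp_{M}^{U}(t)(F\cdot f)=\mr{Exp}_{M}^{U}(t)(F\cdot f).
\end{equation*}
Next, using that $\mr{Exp}_{M}^{U}(t)$ is an algebra homomorphism of $\mc{C}^{\infty}(M)$,
\begin{equation*}
\mr{Exp}_{M}^{U}(t)(F\cdot f)=\mr{Exp}_{M}^{U}(t)(F)\cdot\mr{Exp}_{M}^{U}(t)(f).
\end{equation*}
Finally, applying Lemma \ref{11301957} once more to $f\in\mc{D}(M)$ gives $\mr{Exp}_{M}^{U}(t)(f)=\exp_{M}^{U}(t)(f)$, and combining the three equalities yields the claim.

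There is essentially no obstacle here beyond checking that the hypotheses of Lemma \ref{11301957} are in force (which is assumed in the proposition's statement) and that $F\cdot f$ is genuinely compactly supported so the lemma applies to it; both are immediate. The whole proof is a three-line chain of equalities, the substantive content having been absorbed into Lemma \ref{11301957} and the preceding remark that $\mr{Exp}_{M}^{U}(t)$ is a $\ast$-automorphism of $\mc{C}^{\infty}(M)$.
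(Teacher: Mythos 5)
Your proof is correct and is exactly the paper's argument, which the paper states tersely as "since Lemma \ref{11301957} and since $\mr{Exp}_{M}^{U}(t)$ is a $\ast$-automorphism of $\mc{C}^{\infty}(M)$"; you have simply spelled out the same three-step chain (apply the lemma to $F\cdot f\in\mc{D}(M)$, use multiplicativity of $\mr{Exp}_{M}^{U}(t)$, apply the lemma again to $f$).
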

\begin{proof}
Since Lemma \ref{11301957} and since $\mr{Exp}_{M}^{U}(t)$ is a $\ast$-automorphism of $\mc{C}^{\infty}(M)$.
\end{proof}
\begin{proposition}
\label{12001100}
Under the hypothesis of Lemma \ref{11301957}, 
let $\mc{A}\subset\mf{L}(\mc{D}(M))$ such that $\mc{A}$ is naturally a left $\mc{C}^{\infty}(M)$-module 
namely w.r.t. the external product defined by 
$(F\cdot Q):\mc{D}(M)\to\mc{D}(M)$, $h\mapsto F\cdot Q(h)$ for every $Q\in\mc{A}$ and $F\in\mc{C}^{\infty}(M)$. 
Thus for every $t\in\R$ we have
\begin{enumerate}
\item
if $h\in\mc{D}(M)$, then 
$\Lambda_{M}^{U}(t)(F\cdot Q)h=\mr{Exp}_{M}^{U}(t)(F)\cdot\Lambda_{M}^{U}(t)(Q)h$;
\label{12001100st1}
\item
if $\Lambda_{M}^{U}(t)\mc{A}\subseteq\mc{A}$, then 
$\Lambda_{M}^{U}(t)(F\cdot Q)=\mr{Exp}_{M}^{U}(t)(F)\cdot\Lambda_{M}^{U}(t)(Q)$ 
with the $(\cdot)$ operation in $\mc{A}$.
\label{12001100st2}
\end{enumerate}
\end{proposition}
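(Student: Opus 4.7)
The plan is to unwind the definition of $\Lambda_{M}^{U}(t)$ and apply Prp.~\ref{12010855} directly. Recall that for any $S\in\mf{L}(\mc{D}(M))$ one has $\Lambda_{M}^{U}(t)(S)=\exp_{M}^{U}(t)\circ S\circ\exp_{M}^{U}(-t)$, and that the external law on $\mc{A}$ is defined pointwise, i.e.\ $(F\cdot Q)(k)=F\cdot Q(k)$ for $k\in\mc{D}(M)$.

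For st.~\eqref{12001100st1} I would pick $h\in\mc{D}(M)$ and compute
\begin{equation*}
\Lambda_{M}^{U}(t)(F\cdot Q)(h)
=\exp_{M}^{U}(t)\bigl((F\cdot Q)(\exp_{M}^{U}(-t)h)\bigr)
=\exp_{M}^{U}(t)\bigl(F\cdot Q(\exp_{M}^{U}(-t)h)\bigr).
\end{equation*}
Setting $f\coloneqq Q(\exp_{M}^{U}(-t)h)\in\mc{D}(M)$ and invoking Prp.~\ref{12010855}, the right-hand side equals $\mr{Exp}_{M}^{U}(t)(F)\cdot\exp_{M}^{U}(t)(f)$, which by the very definition of $\Lambda_{M}^{U}(t)(Q)$ is precisely $\mr{Exp}_{M}^{U}(t)(F)\cdot\Lambda_{M}^{U}(t)(Q)(h)$. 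Note that Prp.~\ref{12010855} is applicable since the hypotheses of Lemma~\ref{11301957} are in force by assumption, and the argument $f$ does lie in $\mc{D}(M)$ because $Q\in\mf{L}(\mc{D}(M))$ and $\exp_{M}^{U}(-t)h\in\mc{D}(M)$.

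For st.~\eqref{12001100st2} I would argue that both sides of the claimed equality are elements of $\mc{A}$. Indeed, since $\mc{A}$ is a left $\mc{C}^{\infty}(M)$-module, $F\cdot Q\in\mc{A}$, so the added hypothesis $\Lambda_{M}^{U}(t)\mc{A}\subseteq\mc{A}$ gives $\Lambda_{M}^{U}(t)(F\cdot Q)\in\mc{A}$; similarly $\Lambda_{M}^{U}(t)(Q)\in\mc{A}$, and the external product yields $\mr{Exp}_{M}^{U}(t)(F)\cdot\Lambda_{M}^{U}(t)(Q)\in\mc{A}$. Since both elements live in $\mc{A}\subseteq\mf{L}(\mc{D}(M))$ and agree pointwise on $\mc{D}(M)$ by st.~\eqref{12001100st1}, they coincide as operators, hence as elements of $\mc{A}$.

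There is essentially no obstacle here; the only subtle point is that in st.~\eqref{12001100st2} the equality must be read in $\mc{A}$ (endowed with its external product), which is why one needs the additional invariance hypothesis $\Lambda_{M}^{U}(t)\mc{A}\subseteq\mc{A}$ to guarantee that the left-hand side is actually of the form $G\cdot R$ with $G\in\mc{C}^{\infty}(M)$ and $R\in\mc{A}$, so that st.~\eqref{12001100st1} suffices to conclude.
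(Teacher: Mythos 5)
Your proposal is correct and follows essentially the same route as the paper: statement (1) by unwinding the definition of $\Lambda_{M}^{U}(t)$ and applying Prp.~\ref{12010855}, and statement (2) as a direct consequence of (1) under the invariance hypothesis $\Lambda_{M}^{U}(t)\mc{A}\subseteq\mc{A}$. Your extra remarks on (2) merely spell out what the paper leaves implicit.
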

\begin{proof}
St. \eqref{12001100st2} follows since st. \eqref{12001100st1}. Next we have 
\begin{equation*}
\begin{aligned}
\Lambda_{M}^{U}(t)(F\cdot Q)h
&=
\exp_{M}^{U}(t)\bigl(F\cdot(Q\exp_{M}^{U}(-t)h)\bigr)
\\
&=
\mr{Exp}_{M}^{U}(t)(F)\cdot\Lambda_{M}^{U}(t)(Q)h;
\end{aligned}
\end{equation*}
where the second equality follows by Prp. \ref{12010855}. 
\end{proof}
\begin{corollary}
\label{12021121}
Let $\mc{M}$ be a semi-Riemannian manifold, $M$ its underlying manifold and $\{E_{i}\}_{i}^{n=\mr{dim}M}$
be a frame of orthonormal fields of $\mc{M}$. Thus under the hypothesis of Lemma \ref{11301957}
we have for every $V\in\mf{X}(M)$, $t\in\R$ and $h\in\mc{D}(M)$ that 
\begin{equation}
\label{12021121st1}
\Gamma_{\mc{M}}^{U}(t)(\mathsterling_{V})h
=
\sum_{i=1}^{n}\ep_{i}\mr{Exp}_{M}^{U}(t)(\lr{V}{E_{i}}_{\mc{M}})\cdot
\Gamma_{M}^{U}(t)(\mathsterling_{E_{i}})h.
\end{equation}
If in addition $[U,E_{i}]=\ze$ for every $i\in[1,n]\cap\Z$, then 
\begin{equation}
\label{12021121st2}
\Gamma_{\mc{M}}^{U}(t)(\mathsterling_{V})
=
\mathsterling_{V_{t}}
\end{equation}
where $V_{t}\coloneqq\sum_{i=1}^{n}\ep_{i}\mr{Exp}_{M}^{U}(t)(\lr{V}{E_{i}}_{\mc{M}})E_{i}$.
Moreover the left $\mc{C}^{\infty}(M)$-submodule $\mc{A}_{M}$ of $\mr{DiffOp}^{1}(M)$ generated by the set 
$\{\mathsterling_{W}\,\vert\,W\in\mf{X}(M)\}$ is such that $\Gamma_{\mc{M}}^{U}(t)\mc{A}_{M}\subseteq\mc{A}_{M}$,
and 
\begin{equation}
\label{12021121st3}
\Gamma_{\mc{M}}^{U}(t)(\mathsterling_{V})
=
\sum_{i=1}^{n}\ep_{i}\mr{Exp}_{M}^{U}(t)(\lr{V}{E_{i}}_{\mc{M}})\cdot\mathsterling_{E_{i}};
\end{equation}
with the $(\cdot)$ operation in $\mc{A}_{M}$.
\end{corollary}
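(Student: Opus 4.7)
The plan is to derive the three assertions in a cascade, using the orthonormal expansion of $\mathsterling_{V}$ together with Prp.~\ref{12001100} as the workhorse, and Rmk.~\ref{12101810} as the simplification that makes the hypothesis $[U,E_{i}]=\ze$ pay off. Throughout the argument I treat $\Gamma_{\mc{M}}^{U}(t)$ as the restriction of $\Lambda_{M}^{U}(t)$ to $\mf{B}(\mc{M})$ (Def.~\ref{11241536}), and I use Rmk.~\ref{11251447} to know that $\mathsterling_{W}\in\mr{DiffOp}^{1}(M)\subset\mf{B}(\mc{M})$ for every $W\in\mf{X}(M)$, so all terms are in the domain where the group acts.

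For \eqref{12021121st1} I begin from the identity
\begin{equation*}
\mathsterling_{V}=\sum_{i=1}^{n}\ep_{i}\lr{V}{E_{i}}_{\mc{M}}\cdot\mathsterling_{E_{i}},
\end{equation*}
already established at the start of the proof of Thm.~\ref{11301607}, reading each summand $\ep_{i}\lr{V}{E_{i}}_{\mc{M}}\cdot\mathsterling_{E_{i}}$ as an element of $\mf{L}(\mc{D}(M))$ under the left $\mc{C}^{\infty}(M)$-module structure of the Notation section. I then apply the linearity of $\Lambda_{M}^{U}(t)$ together with Prp.~\ref{12001100}\eqref{12001100st1} pointwise, which replaces each factor $F=\ep_{i}\lr{V}{E_{i}}_{\mc{M}}$ by $\mr{Exp}_{M}^{U}(t)(F)$ and pushes $\Lambda_{M}^{U}(t)$ through the product. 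Evaluating at $h\in\mc{D}(M)$ and identifying $\Lambda$ with $\Gamma$ yields \eqref{12021121st1} directly.

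With the added hypothesis $[U,E_{i}]=\ze$, Rmk.~\ref{12101810} gives $\Gamma_{\mc{M}}^{U}(t)(\mathsterling_{E_{i}})=\mathsterling_{E_{i}}$ for every $i$, so \eqref{12021121st1} collapses to
\begin{equation*}
\Gamma_{\mc{M}}^{U}(t)(\mathsterling_{V})h=\sum_{i=1}^{n}\ep_{i}\mr{Exp}_{M}^{U}(t)(\lr{V}{E_{i}}_{\mc{M}})\cdot\mathsterling_{E_{i}}(h),
\end{equation*}
which, by the definition of $V_{t}$ and the orthonormal expansion of $\mathsterling_{V_{t}}$ used in the first step, is exactly $\mathsterling_{V_{t}}(h)$; this proves \eqref{12021121st2}.

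For the closing statement, I first argue the invariance $\Gamma_{\mc{M}}^{U}(t)\mc{A}_{M}\subseteq\mc{A}_{M}$. A generic element of $\mc{A}_{M}$ is a finite sum $\sum_{j}F_{j}\cdot\mathsterling_{W_{j}}$ with $F_{j}\in\mc{C}^{\infty}(M)$ and $W_{j}\in\mf{X}(M)$; by \eqref{12021121st2} each $\Gamma_{\mc{M}}^{U}(t)(\mathsterling_{W_{j}})$ equals $\mathsterling_{(W_{j})_{t}}\in\mc{A}_{M}$, and then Prp.~\ref{12001100}\eqref{12001100st1} combined with $\mc{C}^{\infty}(M)$-linearity shows $\Gamma_{\mc{M}}^{U}(t)(F_{j}\cdot\mathsterling_{W_{j}})=\mr{Exp}_{M}^{U}(t)(F_{j})\cdot\mathsterling_{(W_{j})_{t}}\in\mc{A}_{M}$. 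Once this invariance is in hand I invoke Prp.~\ref{12001100}\eqref{12001100st2} with $\mc{A}=\mc{A}_{M}$ to pull the $\mc{C}^{\infty}(M)$-scalar out as an equality in $\mc{A}_{M}$, and apply it to the expansion $\mathsterling_{V}=\sum_{i}\ep_{i}\lr{V}{E_{i}}_{\mc{M}}\cdot\mathsterling_{E_{i}}$ combined with $\Gamma_{\mc{M}}^{U}(t)(\mathsterling_{E_{i}})=\mathsterling_{E_{i}}$ from Rmk.~\ref{12101810} to conclude \eqref{12021121st3}. The main bookkeeping hazard is keeping straight that the identity of Prp.~\ref{12001100}\eqref{12001100st1} only promotes to an identity in the submodule $\mc{A}_{M}$ after invariance is verified, so the order \eqref{12021121st1}$\Rightarrow$\eqref{12021121st2}$\Rightarrow$invariance$\Rightarrow$\eqref{12021121st3} is essential.
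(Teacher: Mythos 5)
Your proposal is correct and follows essentially the same route as the paper: the orthonormal expansion of $\mathsterling_{V}$ plus Prp.~\ref{12001100}\eqref{12001100st1} for \eqref{12021121st1}, Rmk.~\ref{12101810} for \eqref{12021121st2}, and then invariance of $\mc{A}_{M}$ from \eqref{12021121st2} together with Prp.~\ref{12001100}\eqref{12001100st1}. For \eqref{12021121st3} you use the invariance-plus-Prp.~\ref{12001100}\eqref{12001100st2} argument, which is exactly the alternative the paper itself offers alongside its direct reading of \eqref{120021331} in the module $\mc{A}_{M}$, so there is no substantive difference.
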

\begin{proof}
\eqref{12021121st1} follows since Prp. \ref{12001100}\eqref{12001100st1} applied to the natural
left $\mc{C}(M)^{\infty}$-module $\mr{DiffOp}^{1}(M)$.
Next if $[U,E_{i}]=\ze$ for every $i\in[1,n]\cap\Z$, then 
by \eqref{12021121st1} and Rmk. \ref{12101810} we obtain 
\begin{equation}
\label{120021331}
\begin{aligned}
\Gamma_{\mc{M}}^{U}(t)(\mathsterling_{V})h
&=
\sum_{i=1}^{n}\ep_{i}\mr{Exp}_{M}^{U}(t)(\lr{V}{E_{i}}_{\mc{M}})\cdot
\mathsterling_{E_{i}}h
\\
&=
\mathsterling_{V_{t}}h.
\end{aligned}
\end{equation}
Hence \eqref{12021121st2} follows, which together Prp. \ref{12001100}\eqref{12001100st1}
imply $\Gamma_{\mc{M}}^{U}(t)\mc{A}_{M}\subseteq\mc{A}_{M}$.
\eqref{12021121st3} follows since the first equality in \eqref{120021331} and the fact that 
$\mc{A}_{M}$ is a left $\mc{C}(M)^{\infty}$-module; alternatively by 
$\Gamma_{\mc{M}}^{U}(t)\mc{A}_{M}\subseteq\mc{A}_{M}$ and Prp. \ref{12001100}\eqref{12001100st2}.
\end{proof}
We conclude this section with a result providing sufficient conditions on a complete vector field $U$ on $\mc{M}$
ensuring that $\exp_{M}^{U}=\upeta_{M}^{U}$. 
\begin{corollary}
\label{12061539}
Let $(\mc{M},U)\in\mr{vf}^{\star}$ with $\mc{M}=(M,g)$ such that $U$ is complete and 
\begin{equation}
\label{12061540}
(\forall t\in\R)(\mu_{g}\circ\upeta_{M}^{U}(t)=\mu_{g}\circ\imath_{\mc{D}(M)}^{\mc{K}(M)}).
\end{equation}
Thus 
\begin{enumerate}
\item
$(\mc{M},U)\in\mf{vf}_{0}$ and $\upeta_{M}^{U}$ extends to a $C_{0}$-group $\upeta_{\mc{M}}^{U}$ on $\mc{H}_{g}$ 
of unitary operators whose infinitesimal generator extends $\mathsterling_{U}$.
\label{12061539st1}
\item
If $\{\mathsterling_{U}^{k}\,\vert\,k\in\Z_{+}\}$ is 
$(\|\cdot\|_{\mc{H}_{g}},\|\cdot\|_{\mc{H}_{g}})$-equicontinuous, then
$\mc{D}(M)$ is a core for both the infinitesimal generators of $\upeta_{\mc{M}}^{U}$ and $\mf{exp}_{\mc{M}}^{U}$.
Thus $\mf{exp}_{\mc{M}}^{U}=\upeta_{\mc{M}}^{U}$, in particular $\exp_{M}^{U}=\upeta_{M}^{U}$. 
\label{12061539st2}
\end{enumerate}
\end{corollary}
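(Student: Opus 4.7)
The plan is to prove part (1) by repeating the constructions of Lemma \ref{11240851} and Cor. \ref{11261248} with $\upeta_M^U$ in the role of $\exp_M^U$, and then obtain part (2) by identifying the generators of $\mf{exp}_\mc{M}^U$ and $\upeta_\mc{M}^U$ through the standard core criterion for $C_0$-semigroups together with Stone's theorem.

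For part (1), I would start by extracting $\mu_g\circ\mathsterling_U=\ze$ from the invariance hypothesis \eqref{12061540}. Completeness of $U$ makes $\uptheta^U$ a smooth flow, so $t\mapsto\upeta_M^U(t)f$ is smooth into $\mc{D}(M)$ with derivative $\mathsterling_U f$ at the origin (in the paper's sign convention); since $\mu_g$ is $\tau_c^\infty$-continuous on $\mc{D}(M)$, differentiating $\mu_g\circ\upeta_M^U(t)=\mu_g$ at $t=0$ forces $\mu_g\circ\mathsterling_U=\ze$, placing $(\mc{M},U)$ in $\mf{vf}_0$. The verbatim computation of \eqref{11240916}, with $\exp_M^U$ replaced by $\upeta_M^U$ and using that the pullback is a $*$-preserving algebra morphism, then yields $\lr{\upeta_M^U(t)f}{\upeta_M^U(t)h}_{\mc{H}_g}=\lr{f}{h}_{\mc{H}_g}$, so $\upeta_M^U(t)$ extends by density to a unitary $\upeta_\mc{M}^U(t)$ on $\mc{H}_g$ with $\upeta_\mc{M}^U(t)^\intercal=\upeta_\mc{M}^U(-t)$. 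The $C_0$-property follows exactly as in Cor. \ref{11261248}: strong continuity of $t\mapsto\upeta_\mc{M}^U(t)h$ in $\mc{H}_g$-norm on the dense total set $\mc{D}(M)$ (via $\mc{D}(M)\hookrightarrow\mc{H}_g$), combined with the isometric equicontinuity of the unitary family, extends to all of $\mc{H}_g$. Finally the generator extends $\mathsterling_U$ because the difference quotient $(\upeta_M^U(t)f-f)/t$ converges to $\mathsterling_U f$ in $\mc{D}(M)$, hence in $\mc{H}_g$, for every $f\in\mc{D}(M)$.

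For part (2), let $A$ denote the generator of $\mf{exp}_\mc{M}^U$ and $B$ that of $\upeta_\mc{M}^U$; by Cor. \ref{11261248} and part (1) both are closed and skew-adjoint and agree with $\mathsterling_U$ on $\mc{D}(M)$. The subspace $\mc{D}(M)$ is invariant under both unitary groups because $\exp_M^U(t)\mc{D}(M)\subseteq\mc{D}(M)$ by construction and $\upeta_M^U(t)\mc{D}(M)\subseteq\mc{D}(M)$ since $\uptheta^U_{-t}$ is a diffeomorphism. The standard core criterion for $C_0$-semigroups — a dense, semigroup-invariant subspace of $\mr{Dom}(A)$ is automatically a core — then yields that $\mc{D}(M)$ is a core for both $A$ and $B$; consequently $A=\overline{\mathsterling_U|_{\mc{D}(M)}}=B$, and Stone's theorem forces $\mf{exp}_\mc{M}^U=\upeta_\mc{M}^U$, from which $\exp_M^U=\upeta_M^U$ follows by restriction to $\mc{D}(M)$. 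The $\|\cdot\|_{\mc{H}_g}$-equicontinuity of $\{\mathsterling_U^k\}$ assumed here secures the analytic-vector picture underlying this identification: it guarantees that the power series $\sum_k(t^k/k!)\mathsterling_U^k f$ defining $\exp_M^U(t)f$ converges in $\mc{H}_g$-norm, so $\mathsterling_U|_{\mc{D}(M)}$ is essentially skew-adjoint on $\mc{D}(M)$ and possesses a unique skew-adjoint extension, which must be both $A$ and $B$.

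The main obstacle is to justify all differentiations of $\upeta_M^U(t)f$ in the correct topologies — specifically, that $t\mapsto\upeta_M^U(t)f$ is $\tau_c^\infty$-differentiable with derivative $\mathsterling_U f$ — since only then do the vanishing $\mu_g(\mathsterling_U f)=0$ in (1), and subsequently the generator identifications $A|_{\mc{D}(M)}=\mathsterling_U=B|_{\mc{D}(M)}$ in (2), become rigorous. Once this regularity is in place, the remainder of both parts is a careful assembly of known results: Lemma \ref{11240851}, Cor. \ref{11261248}, the semigroup core criterion, and Stone's theorem.
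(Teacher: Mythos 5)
Your proposal is correct in substance and reaches both statements, but it departs from the paper's proof in two respects. For st.~\eqref{12061539st1} the skeleton is the same (isometry of $\upeta_{M}^{U}(t)$ on the dense subspace $\mc{D}(M)$ from the $\ast$-morphism property and \eqref{12061540}, unitary extension, $C_{0}$-property and generator read off on the dense subspace), but you ground every limit in the $\tau_{c}^{\infty}$-differentiability of $t\mapsto\upeta_{M}^{U}(t)f$, a classical regularity fact about pullback along a smooth flow which you yourself flag as the main obstacle and do not establish; the paper deliberately avoids it, using only pointwise continuity and pointwise differentiability of $\uptheta^{U}$ together with the Lebesgue dominated convergence theorem to get the limits in $\|\cdot\|_{\mc{H}_{g}}$ and in $\mr{L}^{1}(M,d\mu_{g})$, and it obtains $\mu_{g}\circ\mathsterling_{U}=\ze$ by viewing $\mu_{g}$ as a continuous functional on $\mr{L}^{1}(M,d\mu_{g})$ rather than by differentiating in $\mc{D}(M)$ --- a cheaper route if you do not want to prove the $\tau_{c}^{\infty}$-smoothness. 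For st.~\eqref{12061539st2} your route is genuinely different: you obtain that $\mc{D}(M)$ is a core for both generators from the standard criterion that a dense subspace of the domain which is invariant under the group is a core (invariance holding since $\exp_{M}^{U}(t)$ and $\upeta_{M}^{U}(t)$ both map $\mc{D}(M)$ into itself), whereas the paper uses the $\|\cdot\|_{\mc{H}_{g}}$-equicontinuity of $\{\mathsterling_{U}^{k}\}$ to make $\mc{D}(M)$ a dense $\mathsterling_{U}$-invariant set of analytic vectors and then applies the analytic-vector core result. Your version actually buys more: the core property, hence $\mf{exp}_{\mc{M}}^{U}=\upeta_{\mc{M}}^{U}$ and $\exp_{M}^{U}=\upeta_{M}^{U}$, follows without the equicontinuity hypothesis, which in your write-up is only used for the redundant essential-skew-adjointness remark; the paper's argument needs that hypothesis but stays entirely at the level of the operator $\mathsterling_{U}$. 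The endgame is the same in both: the two generators are the closure of $\mathsterling_{U}$ on the common core, and uniqueness of the $C_{0}$-group generated (Stone, or equicontinuous-semigroup uniqueness in the paper) identifies the two groups.
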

\begin{proof}
Let $t\in\R$. $\upeta_{M}^{U}(t)$ is an isometry of the $\mc{H}_{g}$-dense subspace $\mc{D}(M)$
since $\upeta_{\mc{M}}^{U}$ is a morphism of $\ast$-algebras and since \eqref{12061540},
thus $\upeta_{M}^{U}(t)$ extends to a unitary operator $\upeta_{\mc{M}}^{U}(t)$ on $\mc{H}_{g}$. 
Next let $\{t_{n}\}_{n\in\N}$ be a sequence in $\R$ converging at $0$ and $f\in\mc{D}(M)$, thus 
$\lim_{n\in\N}\upeta_{\mc{M}}^{U}(t_{n})(f)=f$ pointwise since the flow $\uptheta^{U}$ is pointwise continuous.
But $\upeta_{\mc{M}}^{U}(t_{n})(f)\in\mc{D}(M)$ as well $f\in\mc{D}(M)$ and $\mc{D}(M)\subset\mc{H}_{g}$
therefore by applying the Lebesgue Thm.
$\lim_{n\in\N}\upeta_{\mc{M}}^{U}(t_{n})(f)=f$ w.r.t. the norm topology of $\mc{H}_{g}$.
But $\upeta_{\mc{M}}^{U}$ is a semigroup of norm continuous operators, therefore we conclude that 
$\upeta_{\mc{M}}^{U}$ is a $C_{0}$-group on $\mc{H}_{g}$ and we let $\mf{m}_{U}$ denote its infinitesimal 
generator.
Next by definition of $\uptheta^{U}$ we deduce that for every $f\in\mc{D}(M)$,
$\mathsterling_{U}f$ is the pointwise derivative at $t=0$ of the map $t\mapsto\upeta_{\mc{M}}^{U}(t)f$.
Thus by $\mc{D}(M)\subset\mc{H}_{g}$ and by applying the Lebesgue Thm. 
we conclude that $\mathsterling_{U}f$ is the derivative at $t=0$ of the map $t\mapsto\upeta_{\mc{M}}^{U}(t)f$
w.r.t. the norm topology of $\mc{H}_{g}$, namely $\mf{m}_{U}$ is an extension of $\mathsterling_{U}$.  
Now $\upeta_{\mc{M}}^{U}(t)f\in\mr{L}^{1}(M,d\mu_{g})$ 
since $\mu_{g}(|\upeta_{\mc{M}}^{U}(t)f|)=\mu_{g}(\upeta_{\mc{M}}^{U}(t)|f|)=\mu_{g}(|f|)<\infty$ 
where the first equality follows at once by the definition of $\upeta_{M}^{U}$, the second equality follows 
by \eqref{12061540}, the inequality follows since $\mc{D}(M)\subset\mc{K}(M)\subset\mr{L}^{1}(M,d\mu_{g})$.
Moreover $\mathsterling_{U}f\in\mc{D}(M)\subset\mr{L}^{1}(M,d\mu_{g})$ thus by applying the Lebesgue Thm. 
similarly as above, we obtain that $\mathsterling_{U}f$ is the derivative at $t=0$ of the map 
$t\mapsto\upeta_{\mc{M}}^{U}(t)f$. w.r.t. the norm topology of $\mr{L}^{1}(M,d\mu_{g})$. 
Next $\mu_{g}$ extends to an element of $\mr{L}^{1}(M,\mu_{g})^{\prime}$, therefore what right now proven
and \eqref{12061540} imply that $\mu_{g}\circ\mathsterling_{U}=\ze$ and st. \eqref{12061539st1} follows.
Now st. \eqref{12061539st1} and Cor. \ref{11261248} imply that there exists a unique 
$C_{0}$-group $\mf{exp}_{\mc{M}}^{U}$ on $\mc{H}_{g}$ of unitary operators extending $\exp_{M}^{U}$ 
and whose infinitesimal generator $\mf{l}_{U}$ extends $\mathsterling_{U}$, in particular 
\begin{equation}
\label{12070638}
\mf{m}_{U}\up\mc{D}(M)=\mf{l}_{U}\up\mc{D}(M)=\mathsterling_{U}.
\end{equation}
Therefore the additional equicontinuity hypothesis in st. \eqref{12061539st2} 
implies that $\mc{D}(M)$ is a set of analytic elements for both the generators 
$\mf{m}_{U}$ and $\mf{l}_{U}$, moreover $\mc{D}(M)$ is dense in $\mc{H}_{g}$ w.r.t. the norm topology thus
also w.r.t. the $\sigma(\mc{H}_{g},\mc{H}_{g}^{\prime})$-topology, 
and $\mathsterling_{U}\mc{D}(M)\subseteq\mc{D}(M)$, thus we conclude that $\mc{D}(M)$ is a core of 
$\mf{m}_{U}$ and $\mf{l}_{U}$ by applying well-known general results about the core of generators of 
$C_{0}$-semigroups, and then the first sentence of st. \eqref{12061539st2} 
follows. The first sentence of st. \eqref{12061539st2} and \eqref{12070638} imply $\mf{m}_{U}=\mf{l}_{U}$ 
and then the second sentence of st. \eqref{12061539st2} follows by the well-known uniqueness of 
the generator of a equicontinuous $C_{0}$-semigroup.
\end{proof}


\end{document}